\definecolor{bondiblue}{rgb}{0.1, 0.58, 0.71}
\newcommand{\addperiod}[1]{#1.}
\titleformat{\section}[block]{\scshape\Large\filcenter}{\thesection.}{1em}{}
\titleformat{\subsection}[runin]{\normalfont\large\bfseries}{\thesubsection.}{1em}{\addperiod}
\titleformat{\subsubsection}[runin]{\normalfont\bfseries}{\thesubsubsection.}{1em}{\addperiod}
\numberwithin{equation}{section}
\DeclareSymbolFontAlphabet{\mathbbl}{bbold}
\def\keywords{\xdef\@thefnmark{}\@footnotetext}
\theoremstyle{plain}
\newtheorem{thm}{Theorem}[section]
\newtheorem{cor}[thm]{Corollary}
\newtheorem{lem}[thm]{Lemma}
\newtheorem{prop}[thm]{Proposition}
\newtheorem*{ques}{Question}
\theoremstyle{definition}
\newtheorem{defi}[thm]{Definition}
\newtheorem{rem}[thm]{Remark}
\newtheorem*{nota}{Notation}
\theoremstyle{remark}
\newenvironment{enumarabicup}
{\begin{enumerate}[font=\upshape, labelindent=\parindent, label=(\arabic*)]}
{\end{enumerate}}
\DeclareMathAlphabet{\pazocal}{OMS}{zplm}{m}{n}
\DeclareMathAlphabet{\dutchcal}{U}{dutchcal}{m}{n}
\newcommand{\isomorphic}{\xrightarrow{\hspace{0.5mm} \sim \hspace{0.5mm}}}
\newcommand{\lisomorphic}{\xleftarrow{\hspace{0.5mm} \sim \hspace{0.5mm}}}
\newcommand{\calc}{\mathcal{C}}
\newcommand{\cald}{\mathcal{D}}
\newcommand{\pazs}{\pazocal{S}}
\def\CC{\mathbb{C}}
\def\FF{\mathbb{F}}
\def\NN{\mathbb{N}}
\def\QQ{\mathbb{Q}}
\def\ZZ{\mathbb{Z}}
\newcommand{\Ainf}{A_{\inf}}
\newcommand{\Acrys}{A_{\textup{cris}}}
\newcommand{\AF}{A_F}
\newcommand{\BF}{B_F}
\newcommand{\DF}{D_F}
\newcommand{\NF}{N_F}
\newcommand{\BdR}{B_{\textup{dR}}}
\newcommand{\Bcrys}{B_{\textup{cris}}}
\newcommand{\action}{\textrm{-action}}
\newcommand{\cohomology}{\textrm{-cohomology}}
\newcommand{\cont}{\textup{cont}}
\newcommand{\crys}{\textup{cris}}
\newcommand{\Dcrys}{D_{\textup{cris}}}
\newcommand{\Vcrys}{V_{\textup{cris}}}
\newcommand{\equivariant}{\textrm{-equivariant}}
\newcommand{\etale}{\textup{\'et}}
\newcommand{\Ext}{\textup{Ext}}
\newcommand{\Fil}{\textup{Fil}}
\newcommand{\Fr}{\textup{Frac}}
\newcommand{\Gal}{\textup{Gal}}
\newcommand{\gr}{\textup{gr}}
\newcommand{\Hom}{\textup{Hom}}
\newcommand{\kert}{\textup{Ker}\hspace{0.3mm}}
\newcommand{\linear}{\textup{-linear}}
\newcommand{\lattice}{\textrm{-lattice}}
\newcommand{\module}{\textrm{-module}}
\newcommand{\modules}{\textrm{-modules}}
\newcommand{\mubar}{\overline{\mu}}
\newcommand{\MF}{\textup{MF}}
\newcommand{\Mod}{\textup{-Mod}}
\newcommand{\Finfty}{F_{\infty}}
\newcommand{\OFinfty}{O_{F_{\infty}}}
\newcommand{\OFbar}{O_{\overline{F}}}
\newcommand{\padic}{p\textrm{-adic}}
\newcommand{\ptilde}{\tilde{p}}
\newcommand{\rank}{\textup{rk}}
\newcommand{\Rep}{\textup{Rep}}
\newcommand{\representation}{\textrm{-representation}}
\newcommand{\syn}{\textup{syn}}
\newcommand{\TF}{T_F}
\newcommand{\VF}{V_F}
\newcommand{\textmod}{\textup{ mod }}
\newcommand{\torsion}{\textrm{-torsion}}
\newcommand{\wa}{^{\textup{wa}}}
\title{\vspace{-5mm}\textsc{Crystalline part of the Galois cohomology of crystalline representations}}
\author{\textsc{Abhinandan} \\ \footnotesize{IMJ-PRG, Sorbonne Universit\'e, 4 Place Jussieu, Paris, France} \\ \footnotesize{E-mail: \href{abhinandan@imj-prg.fr}{abhinandan@imj-prg.fr}}}
\newcommand{\Addresses}{{
  \footnotesize

  \rule{2cm}{0.4pt}\vspace{2mm}

  \textsc{Abhinandan}\par\nopagebreak
  \textsc{IMJ-PRG, Sorbonne Universit\'e, 4 Place Jussieu, Paris, France}\par\nopagebreak\vspace{-0.7mm}
  \textit{E-mail}: \footnotesize{\href{abhinandan@imj-prg.fr}{abhinandan@imj-prg.fr}}, \textit{Web}: \footnotesize{\href{https://abhinandan.perso.math.cnrs.fr/}{https://abhinandan.perso.math.cnrs.fr/}}
}}
\date{}
\begin{document}

\fontdimen2\font=0.3em
\pagenumbering{arabic}

\keywords{\textit{Keywords}: crystalline representations, Galois cohomology, $(\varphi, \Gamma)\textrm{-modules}$, syntomic complex}
\keywords{\textit{2020 Mathematics Subject Classification}: 11S23, 11S25, 14F30.}

\maketitle
{
	\vspace{-5mm}
	\textsc{Abstract.} For $p \geqslant 3$ and an unramified extension $F/\QQ_p$ with perfect residue field, we define a syntomic complex with coefficients in a Wach module over a certain period ring for $F$.
	We show that our complex computes the crystalline part of the Galois cohomology (in the sense of Bloch and Kato) of the associated crystalline representation of the absolute Galois group of $F$.
	Furthermore, we establish that Wach modules of Berger naturally descend over to a smaller period ring studied by Fontaine and Wach.
	This enables us to define another syntomic complex with coefficients, and we show that its cohomology also computes the crystalline part of the Galois cohomology of the associated representation.
}


\section{Introduction}

Let $F$ be an unramified extension of $\QQ_p$ with perfect residue field, and let $G_F$ denote the absolute Galois group of $F$.
One of the main goals of $\padic$ Hodge theory is to classify $\padic$ representations of $G_F$ arising from geometry, for example, \textit{crystalline}, semistable, de Rham etc.
The notion of $\padic$ crystalline representations, defined by Fontaine in \cite{fontaine-reps}, is meant to capture the idea of ``good reduction'' of algebraic varieties defined over $F$.

To understand $\padic$ representations more explicitly, Fontaine intiated different programs aiming to describe $\padic$ representations in terms of certain semilinear algebraic objects.
In \cite{fontaine-colmez} Colmez and Fontaine showed that the category of $\padic$ crystalline representations of $G_F$ is equivalent to the category of weakly admissible filtered $\varphi\modules$ over $F$ (see Section \ref{subsec:padicreps}).
On the other hand, in \cite{fontaine-phigamma}, Fontaine showed that the category of all $\padic$ representations of $G_F$ is equivalent to the category of \'etale $(\varphi, \Gamma_F)\modules$, where $\Gamma_F \isomorphic \ZZ_p^{\times}$.
Fontaine's equivalence was further refined to establish an equivalence between the category of $\padic$ crystalline representations of $G_F$ and the category of Wach modules, where a Wach module is a certain lattice inside the \'etale $(\varphi, \Gamma_F)\module$ associated to the representation (see \cite{fontaine-phigamma, wach-free, colmez-hauteur, berger-limites}).
All categorical equivalences mentioned above are exact, and it is natural to ask the following:
\begin{ques}
	Let $V$ be a $\padic$ crystalline representation of $G_F$.
	Can one (partially) compute the continuous Galois cohomology of $V$ in terms of the associated filtered $\varphi\module$, resp.\ the \'etale $(\varphi, \Gamma_F)\module$, resp.\ the Wach module?
\end{ques}

In \cite{bloch-kato}, Bloch and Kato defined a complex using the filtered $\varphi\module$ $\Dcrys(V)$ associated to $V$, and showed that their complex computes the \textit{crystalline part} of the continuous Galois cohomology of $V$ (see Section \ref{subsec:dcrys_complex_bks}).
Moreover, in \cite{herr-cohomologie}, Herr defined a complex in terms of the \'etale $(\varphi, \Gamma_F)\module$ associated to $V$ (not necessarily crystalline), and showed that his complex computes the continuous Galois cohomology of $V$ (see Section \ref{subsec:fontaine_herr_complex}).
The main objective of this article is to answer the \textit{open question} above for Wach modules, i.e.\ in this article we will define a syntomic complex in terms of the Wach module associated to $V$, and show that our complex computes the \textit{crystalline part} of the continuous Galois cohomology of $V$ (see Theorem \ref{intro_thm:syntomic_blochkatoselmer_af+} and Theorem \ref{intro_thm:syntomic_blochkatoselmer_s}).
Our results are related to the results of \cite{bloch-kato} (see Remark \ref{rem:syntomic_to_blochkato_af+} and Remark \ref{rem:syntomic_to_blochkato_s}) and \cite{herr-cohomologie} (see Remark \ref{rem:syntomic_to_fontaineherr}), however, our constructions and proofs are direct and independent of \cite{bloch-kato, herr-cohomologie}.

At this point, let us also remark that the question of computing Bloch--Kato Selmer groups have previously been taken up in prior works using different objects in $\padic$ Hodge theory.
For example, in \cite{iovita-marmora}, Iovita and Marmora studied similar questions using the theory of Breuil modules, and in \cite{pottharst}, Pottharst studied a subspace of the Bloch--Kato Selmer group using the theory of $(\varphi, \Gamma)\textrm{-modules}$ over the Robba ring.
To describe our results in more detail, we begin by recalling the definition given by Bloch and Kato.

\subsection{Bloch--Kato Selmer groups}\label{subsec:dcrys_complex_bks}

Let $V$ be a $\padic$ crystalline representation of $G_F$.
In \cite[Section 3, Equation (3.7.2)]{bloch-kato}, the authors defined Bloch--Kato Selmer groups of $V$ as a subspace inside the continuous $G_F\cohomology$ of $V$, i.e.\ $H^k_f(G_F, V) \subset H^k(G_F, V)$, for $k \in \NN$.
Morally, the Bloch--Kato Selmer group picks out the \textit{crystalline part} of the Galois cohomology of $V$ (see Remark \ref{rem:h1f_extclass}).
More precisely, let $V$ be a $\padic$ crystalline representation of $G_F$.
Then, from \cite[Corollary 3.8.4]{bloch-kato} we have that the complex
\begin{equation}\label{intro_eq:dcrys_complex_bks}
	\cald^{\bullet}(\Dcrys(V)) \colon \Fil^0 \Dcrys(V) \xrightarrow{\hspace{1mm} 1-\varphi \hspace{1mm}} \Dcrys(V),
\end{equation}
computes the crystalline part of the Galois cohomology of $V$, i.e.\ we have natural isomorphisms $H^k(\cald^{\bullet}(\Dcrys(V))) \isomorphic H^k_f(G_F, V)$ for each $k \in \NN$ (see Corollary \ref{cor:dcrys_complex_bks} for a proof).

Let us note that, in \cite{bloch-kato}, Bloch and Kato defined the Selmer group, and used it as an important ingredient for the computation of Tamagawa numbers.
Moreover, in \cite{benois-berger}, Benois and Berger proved some conjectures in the Iwasawa theory of crystalline representations, formulated in terms of the local Tamagawa numbers, using techniques employed in the theory of $(\varphi, \Gamma)\textrm{-modules}$.
Then, it is reasonable to expect that when $V$ is a crystalline representation with arbitrary Hodge--Tate weights, then the syntomic complex of Theorem \ref{intro_thm:syntomic_blochkatoselmer_af+} below and its integral variant (see Remarks \ref{rem:bhatt_lurie_syntomic} and \ref{rem:integral_bks}) would provide one with new tools to study the local Tamagawa number in this case.

\subsection{Syntomic complexes and Galois cohomology}\label{subsec:syncompl_galcoh}

Let $p \geqslant 3$ and set $\Finfty := \cup_n F(\mu_{p^n})$ with $\Gamma_F := \Gal(F_{\infty}/F) \isomorphic \ZZ_p^{\times}$, via the $\padic$ cycolotomic character $\chi$.
Note that $\Gamma_F$ fits into the following exact sequence:
\begin{equation}\label{intro_eq:gammaf_es}
	1 \longrightarrow \Gamma_0 \longrightarrow \Gamma_F \longrightarrow \Gamma_{\textrm{tor}} \longrightarrow 1,
\end{equation}
where $\Gamma_0 \isomorphic 1 + p\ZZ_p$ and $\Gamma_{\textrm{tor}} \isomorphic \FF_p^{\times}$, and the projection map in \eqref{intro_eq:gammaf_es} admits a section $\Gamma_{\textrm{tor}} \isomorphic \FF_p^{\times} \rightarrow \ZZ_p^{\times} \lisomorphic \Gamma_F$, where the second map is given as $a \mapsto [a]$, the Teichm\"uller lift of $a$.
We also fix a topological generator $\gamma$ of $\Gamma_0$.

Let $q$ be an indeterminate, and set $\AF^+ := O_F\llbracket q-1 \rrbracket$ equipped with a Frobenius endomorphism $\varphi$ extending the Frobenius on $O_F$ and by setting $\varphi(q) = q^p$, and an $O_F\linear$ and continuous action of $\Gamma_F$ defined by setting $g(q) = q^{\chi(g)}$, for any $g$ in $\Gamma_F$ (see Section \ref{subsec:period_rings}).
Set $\mu := q-1$, and fix the following elements inside $\AF^+$:
\begin{equation*}
	[p]_q := \tfrac{q^p-1}{q-1}, \hspace{3mm} \mu_0 := -p + \textstyle\sum_{a \in \FF_p} q^{[a]}, \hspace{3mm} \ptilde := \mu_0 + p.
\end{equation*}
Now, we set $S := O_F\llbracket \mu_0 \rrbracket = (\AF^+)^{\FF_p^{\times}} \subset \AF^+$, which is stable under the action of $\varphi$ and $\Gamma_0$ (see \cite[Section 3.1.2]{abhinandan-prismatic-wach}); we equip $S$ with the induced $(\varphi, \Gamma_0)\action$.
Our goal is to define a syntomic complex with coefficients in a Wach module over $\AF^+$ (resp.\ $S$), and explore its relationship with the Galois cohomology of the associated crystalline representation.

\subsubsection{Syntomic complex over \texorpdfstring{$\AF^+$}{-}}

Let $\Rep_{\ZZ_p}^{\crys}(G_F)$ denote the category of $\ZZ_p\textrm{-lattices}$ inside $\padic$ crystalline representations of $G_F$, and let $(\varphi, \Gamma)\Mod_{\AF^+}^{[p]_q}$ denote the category of Wach modules over $\AF^+$ (see Definition \ref{defi:wachmod_af+}).
Then, by \cite{fontaine-phigamma, wach-free, colmez-hauteur, berger-limites} we have an equivalence of categories $\Rep_{\ZZ_p}^{\crys}(G_F) \isomorphic (\varphi, \Gamma_F)\Mod_{\AF^+}^{[p]_q}$, sending $T \mapsto N_F(T)$ (see Theorem \ref{thm:crystalline_wach_af+_equivalence}).
Moreover, after inverting $p$, i.e.\ upon passing to associated isogeny categories, the Wach module functor induces an exact equivalence of categories (see Remark \ref{rem:crystalline_wach_equivalence_rational}).

Let $T$ be a $\ZZ_p\textrm{-representation}$ of $G_F$ such that $V := T[1/p]$ is crystalline.
Let $N := \NF(T)$ denote the Wach module over $\AF^+$ associated to $T$ by Theorem \ref{thm:crystalline_wach_af+_equivalence}.
Define a decreasing filtration on $N$ called the \textit{Nygaard filtration} as $\Fil^k N := \{ x \in N \textrm{ such that } \varphi(x) \in [p]_q^k N\}$, for $k \in \ZZ$.
Define an operator on $N$ as $\nabla_q := \tfrac{\gamma-1}{q-1} \colon N \rightarrow N$.
Then, for each $k \in \ZZ$ we have that $\nabla_q(\Fil^k N) \subset \Fil^{k-1} N$ (see Remark \ref{rem:gamma_minus1_image}).
\begin{defi}\label{intro_defi:syntomic_complex_af+}
	Define the \textit{syntomic complex} with coefficients in $N$ as
	\begin{equation*}
		\pazs^{\bullet}(N) \colon \Fil^0 N \xrightarrow{\hspace{1mm}(\nabla_q, 1-\varphi)\hspace{1mm}} \Fil^{-1} N \oplus N \xrightarrow{\hspace{1mm}(1-[p]_q\varphi, \nabla_q)^{\intercal}\hspace{1mm}} N,
	\end{equation*}
	where the first map is $x \mapsto (\nabla_q(x), (1-\varphi)x)$, and the second map is $(x, y) \mapsto (1-[p]_q\varphi)x - \nabla_q(y)$.
\end{defi}

Our first main result is as follows:
\begin{thm}[{Theorem \ref{thm:syntomic_blochkatoselmer_af+}}]\label{intro_thm:syntomic_blochkatoselmer_af+}
	For each $k \in \NN$, we have a natural isomorphism
	\begin{equation*}
		H^k(\pazs^{\bullet}(N))[1/p] \isomorphic H^k_f(G_F, V).
	\end{equation*}
\end{thm}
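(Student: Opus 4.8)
The plan is to compare the syntomic complex $\pazs^{\bullet}(N)$ with the Bloch--Kato complex $\cald^{\bullet}(\Dcrys(V))$ of Theorem~\ref{intro_thm:dcrys_complex_bks}, after inverting $p$, and then invoke that theorem to conclude. The key point is that the Wach module $N = N_F(T)$ recovers the filtered $\varphi$-module $\Dcrys(V)$: one has $D_{\crys}(V) \isomorphic (N/\mu N)[1/p]$ with its Frobenius induced from $\varphi$ on $N$, and the Nygaard filtration $\Fil^k N$ reduces modulo $\mu$ to (a shift of) the Hodge filtration on $D_{\crys}(V)$. So first I would set up, for each $k$, a map of complexes from $\pazs^{\bullet}(N)[1/p]$ to $\cald^{\bullet}(D_{\crys}(V))$ by reduction modulo $\mu$ in degrees $0$ and $1$ — using that $\nabla_q$ kills $\mu$-torsion-free classes appropriately so that the connecting map $\nabla_q$ becomes zero on $N/\mu N$ — together with a suitable identification of the degree-$2$ term; the $1-\varphi$ and $1-[p]_q\varphi$ differentials become $1-\varphi$ (note $[p]_q \equiv p \bmod \mu$, but one works rationally so the relevant comparison is on the graded pieces).

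Second, and this is where the real work lies, I would show this map of complexes is a quasi-isomorphism after inverting $p$. The strategy is to filter $\pazs^{\bullet}(N)$ by powers of $\mu$ (or equivalently analyze the $\mu$-adic/$q$-adic completion) and compute the cohomology of the associated graded pieces. On $\gr^i$ the operator $1-[p]_q\varphi$ and the $\Gamma$-action via $\nabla_q$ act on $\mu^i N/\mu^{i+1} N$, and one needs the key vanishing: for $i \geq 1$ the relevant Koszul-type complex built from $(\nabla_q, 1-\varphi, 1-[p]_q\varphi)$ on the graded piece is acyclic after inverting $p$. This should follow from the fact that $\nabla_q - $ (an integer depending on $i$) is invertible on $\mu^i N/\mu^{i+1}N \otimes \QQ_p$ — because the Sen-type operator / the action of $\log\gamma$ on the graded pieces of a Wach module has Hodge--Tate weights as eigenvalues, and the twisting by $\mu^i$ shifts these, so only finitely many $i$ can be problematic and those are handled by the explicit comparison with $\Dcrys(V)$ in low degree. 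The upshot is that only the $i=0$ graded piece contributes, giving exactly $\cald^{\bullet}(D_{\crys}(V))$.

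Third, having a quasi-isomorphism $\pazs^{\bullet}(N)[1/p] \simeq \cald^{\bullet}(D_{\crys}(V))$, I would invoke Theorem~\ref{intro_thm:dcrys_complex_bks} to identify $H^k(\cald^{\bullet}(D_{\crys}(V))) \isomorphic H^k_f(G_F, V)$, yielding the claimed $H^k(\pazs^{\bullet}(N))[1/p] \isomorphic H^k_f(G_F,V)$. One must check naturality in $T$ (equivalently in $N$), which is automatic since every identification used — $N/\mu N[1/p] \cong D_{\crys}(V)$, the filtration comparison, and the Bloch--Kato isomorphism — is functorial.

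The main obstacle is the acyclicity of the graded pieces for $i \geq 1$ after inverting $p$: one needs precise control of the operator $\nabla_q$ (equivalently the infinitesimal $\Gamma_F$-action) on $\mu^i N/\mu^{i+1} N$ and must rule out that any Hodge--Tate weight of $V$ causes a non-invertibility that is \emph{not} already accounted for by the honest part $D_{\crys}(V)$. I expect this to require the structure theory of Wach modules — in particular that $\varphi^*(N)$ and $N$ differ by $[p]_q^r$ for $r$ bounded by the Hodge--Tate weights, controlling where $\Fil^{-1} N \neq N$ — together with a careful bookkeeping of the two differentials so that the "$1-[p]_q\varphi$" direction kills the $\mu^i$-graded contributions that the "$\nabla_q$" direction alone would not.
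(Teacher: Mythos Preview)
Your strategy is genuinely different from the paper's, and while plausible in outline, it leaves the hardest step as an acknowledged ``obstacle'' rather than a proof.

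The paper does \emph{not} establish Theorem~\ref{thm:syntomic_blochkatoselmer_af+} by comparing with $\cald^{\bullet}(\Dcrys(V))$; in fact the quasi-isomorphism $\pazs^{\bullet}(N)[1/p] \simeq \cald^{\bullet}(\Dcrys(V))$ is recorded only as a \emph{consequence} (Remark~\ref{rem:syntomic_to_blochkato_af+}), and the introduction stresses that the argument is ``direct and independent'' of Bloch--Kato. Instead each degree is treated by hand: $H^0$ via an explicit fixed-point computation reducing to $T^{G_F}$ (Lemma~\ref{lem:h0_syn_blochkatoselmer}); $H^1$ via an explicit bijection $H^1(\pazs^{\bullet}(N)) \isomorphic \Ext^1_{(\varphi,\Gamma_F)\Mod_{\AF^+}^{[p]_q}}(\AF^+, N)$ (Proposition~\ref{prop:h1syn_ext1wach_af+}), constructed by writing down the extension attached to a cocycle and vice versa, then invoking the exact equivalence with crystalline representations after inverting $p$; and $H^2$ by a direct surjectivity argument (Proposition~\ref{prop:h2_syn_blochkatoselmer}, Lemma~\ref{lem:xmuk_h2syn}) that inductively absorbs the $\mu^{-k}$ poles using invertibility of $\chi(\gamma)^{r-k+1}-1$ in $\QQ_p$ together with the convergence of $\sum_n ([p]_q\varphi)^n$ on $N$. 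This buys an integral statement for $H^1$ before inverting $p$, and avoids any appeal to Corollary~\ref{cor:dcrys_complex_bks}.

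Your $\mu$-adic d\'evissage could in principle be carried through: the key mechanism, that $\gamma-1$ acts on $\mu^i N/\mu^{i+1} N$ as multiplication by $\chi(\gamma)^i-1$, is correct and is precisely what drives the paper's $H^2$ computation. But to make it a proof you must (a) control how the $\mu$-filtration interacts with $\Fil^0 N$ and $\Fil^{-1} N$ when $N$ is not effective --- by Lemma~\ref{lem:wachmod_af+_twist_fil}~(2) one has $\mu^j N \cap \Fil^k N = \mu^j \Fil^{k-j} N$, so the graded terms in degrees $0$ and $1$ are $\Fil^{-i}(N/\mu N)$ rather than $N/\mu N$ for small $i$; (b) track $1-\varphi$ and $1-[p]_q\varphi$ on the graded pieces, which is subtle since $\varphi(\mu^i) = [p]_q^i \mu^i$; and (c) justify passage to the limit over the infinite $\mu$-adic filtration. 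None of these is insurmountable, but none is addressed in your sketch, and the paper's route avoids them entirely.
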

The proof of Theorem \ref{intro_thm:syntomic_blochkatoselmer_af+} is subtle.
For computing $H^0(\pazs^{\bullet}(N))$, we first show that Wach modules over $\AF^+$ canonically descend to Wach modules over $S$ (see Theorem \ref{intro_thm:wachmod_s_af+_equiv}), and then we study the action of $\Gamma_0$ on $N$ (see Lemma \ref{lem:h0_syn_blochkatoselmer}).
To prove the claim for $H^1$, we show that our complex computes the extension classes of $\AF^+$ by $N$ in the category of Wach modules over $\AF^+$ (see Proposition \ref{prop:h1syn_ext1wach_af+}), and after inverting $p$, each class gives rise to a crystalline extension class of $\QQ_p$ by $V$, and vice versa (see Corollary \ref{cor:h1syn_ext1wach_bf+}).
Finally, by explicitly studying the action on $N$, of the generator $\gamma$ of $\Gamma_0$, we show that $H^2(\pazs^{\bullet}(N))[1/p]$ vanishes (see Proposition \ref{prop:h2_syn_blochkatoselmer}).

\begin{rem}
	Note that $(N/\mu N)[1/p]$ is a $\varphi\module$ over $F$ since $[p]_q = p \textmod \mu \AF^+$, and $N/\mu N$ is equipped with a filtration $\Fil^k(N/\mu N)$ given as the image of $\Fil^k N$ under the surjection $N \twoheadrightarrow N/\mu N$.
	We equip $(N/\mu N)[1/p]$ with the induced filtration $\Fil^k((N/\mu N)[1/p]) := \Fil^k(N/\mu N)[1/p]$, and note that it is a filtered $\varphi\module$ over $F$.
	Then, from \cite[Th\'eor\`eme III.4.4]{berger-limites} and \cite[Theorem 1.7 \& Remark 1.8]{abhinandan-imperfect-wach} we have that $(N/\mu N)[1/p] \isomorphic \Dcrys(V)$ as filtered $\varphi\modules$ over $F$ (see Theorem \ref{thm:qdeformation_dcrys}).
	The preceding comparison enables us to define a morphism of complexes $\pazs^{\bullet}(N)[1/p] \rightarrow \cald^{\bullet}(\Dcrys(V))$, which induces a quasi-isomorphism (see Remark \ref{rem:syntomic_to_blochkato_af+}).
	In particular, the complex $\pazs^{\bullet}(N)$ may be regarded as a ``lifting to $\AF^+$'' of the complex $\cald^{\bullet}(\Dcrys(V))$.
\end{rem}

\begin{rem}\label{rem:syntomic_to_fontaineherr}
	Definiton \ref{intro_defi:syntomic_complex_af+} can be modified (up to isomorphism) to obtain a subcomplex of the Fontaine--Herr complex from \cite{herr-cohomologie} (see Remark \ref{rem:syntomic_to_blochkato_af+} and Remark \ref{rem:syntomic_to_blochkato_s}).
	Note that the Fontaine--Herr complex computes the Galois cohomology of a representation, while the complex in Definition \ref{intro_defi:syntomic_complex_af+} or Remark \ref{rem:syntomic_to_blochkato_af+} is concerned with capturing the crystalline part of the Galois cohomology.
	Complexes similar to the modified complex in Remark \ref{rem:syntomic_to_blochkato_af+} were studied in \cite{abhinandan-syntomic} and named syntomic complexes.
	Hence, we refer to the complex in Definition \ref{intro_defi:syntomic_complex_af+} as the syntomic complex with coefficients in $N$.
\end{rem}

\begin{rem}\label{rem:bhatt_lurie_syntomic}
	In \cite[Chapter 6]{bhatt-lurie-fgauges}, Bhatt and Lurie have defined syntomic cohomology of prismatic $F\textrm{-gauges}$ on the stack $\ZZ_p^{\syn}$, and in case of reflexive $F\textrm{-gauges}$, compared it to the Bloch--Kato Selmer groups of the associated crystalline representation of $\Gal(\overline{\QQ}_p/\QQ_p)$ (see \cite[Proposition 6.7.3]{bhatt-lurie-fgauges}).
	In light of Theorem \ref{intro_thm:syntomic_blochkatoselmer_af+} and given the prismatic interpretation of Wach modules (see \cite{abhinandan-prismatic-wach}), a natural and interesting question is to ask for a direct (integral) relationship between the syntomic complex of Definition \ref{intro_defi:syntomic_complex_af+} and the syntomic cohomology with coefficients of \cite{bhatt-lurie-fgauges}.
	The aforementioned question and generalisation of the theory above to the relative case, i.e.\ Definition \ref{intro_defi:syntomic_complex_af+} and its relationship with Galois cohomology, will be investigated in a future work.
\end{rem}

\subsubsection{Syntomic complex over \texorpdfstring{$S$}{-}}

Let $(\varphi, \Gamma_0)\Mod_{S}^{\ptilde}$ denote the category of Wach modules over $S$ (see Definition \ref{defi:wachmod_s}).
Our second main result establishes the following descent statement for Wach modules:
\begin{thm}[{Theorem \ref{thm:wachmod_s_af+_equiv}}]\label{intro_thm:wachmod_s_af+_equiv}
	The following natural functor induces an equivalence of $\otimes\textrm{-categories}$,
	\begin{align}\label{intro_eq:wachmod_s_af+_equiv}
		\begin{split}
			(\varphi, \Gamma_0)\Mod_{S}^{\ptilde} &\isomorphic (\varphi, \Gamma_F)\Mod_{\AF^+}^{[p]_q}\\
			M &\longmapsto \AF^+ \otimes_S M,
		\end{split}
	\end{align}
	with a $\otimes\textrm{-compatible}$ quasi-inverse functor given as $N \mapsto N^{\FF_p^{\times}}$.
	Moreover, the functor in \eqref{intro_eq:wachmod_s_af+_equiv} and its quasi-inverse preserve short exact sequences.
\end{thm}

By combining Theorem \ref{intro_thm:wachmod_s_af+_equiv} and Theorem \ref{thm:crystalline_wach_s_equivalence}, we obtain a natural equivalence of categories $\Rep_{\ZZ_p}^{\crys}(G_F) \isomorphic (\varphi, \Gamma_0)\Mod_S^{\ptilde}$, by sending $T \mapsto M_F(T) := \NF(T)^{\FF_p^{\times}}$ (see Theorem \ref{thm:crystalline_wach_s_equivalence}).

Let $T$ be a $\ZZ_p\textrm{-representation}$ of $G_F$ such that $V := T[1/p]$ is crystalline.
Let $M := M_F(T)$ denote the Wach module over $S$ associated to $T$ by Theorem \ref{thm:crystalline_wach_s_equivalence}.
Define a decreasing filtration on $M$ called the \textit{Nygaard filtration} as $\Fil^k M := \{ x \in M \textrm{ such that } \varphi(x) \in \ptilde^k M\}$, for $k \in \ZZ$.
Define an operator on $M$ as $\nabla_0 := \tfrac{\gamma-1}{\mu_0} \colon M \rightarrow M$.
Then, for each $k \in \ZZ$ we have $\nabla_0(\Fil^k M) \subset \Fil^{k-p+1} M$ (see Remark \ref{rem:gamma0_minus1_image}).
\begin{defi}\label{intro_defi:syntomic_complex_s}
	Define the \textit{syntomic complex} with coefficients in $M$ as
	\begin{equation*}
		\pazs^{\bullet}(M) \colon \Fil^0 M \xrightarrow{\hspace{1mm}(\nabla_0, 1-\varphi)\hspace{1mm}} \Fil^{-p+1} M \oplus M \xrightarrow{\hspace{1mm}(1-\ptilde^{p-1}\varphi, \nabla_0)^{\intercal}\hspace{1mm}} M,
	\end{equation*}
	where the first map is given as $x \mapsto (\nabla_0(x), (1-\varphi)x)$, and the second map is given as $(x, y) \mapsto (1-\ptilde^{p-1}\varphi)x - \nabla_0(y)$.
\end{defi}

Our third main result is as follows:
\begin{thm}[{Theorem \ref{thm:syntomic_blochkatoselmer_s}}]\label{intro_thm:syntomic_blochkatoselmer_s}
	For each $k \in \NN$, we have a natural isomorphism
	\begin{equation*}
		H^k(\pazs^{\bullet}(M))[1/p] \isomorphic H^k_f(G_F, V).
	\end{equation*}
\end{thm}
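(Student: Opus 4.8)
The plan is to carry out the three-step strategy behind Theorem~\ref{intro_thm:syntomic_blochkatoselmer_af+}: compute $H^0$, $H^1$ and $H^2$ of $\pazs^{\bullet}(M)$ separately and match each with the corresponding cohomology of $\pazs^{\bullet}(N)$ by means of the descent equivalence of Theorem~\ref{intro_thm:wachmod_s_af+_equiv}, under which $N = \AF^+ \otimes_S M$ and $M = N^{\FF_p^{\times}}$, compatibly with $\varphi$ and the $\Gamma_0$-action. Apart from this equivalence and Theorem~\ref{intro_thm:syntomic_blochkatoselmer_af+}, the inputs I will use are the ring-theoretic relations among $\mu$, $\mu_0$, $[p]_q$, $\ptilde$ (see Subsection~\ref{subsec:period_rings}): most importantly the identity $\varphi(\mu_0) = \mu_0 \ptilde^{p-1}$, the analogue of $\varphi(\mu) = \mu [p]_q$, the evident relation between $\nabla_0$ and $\nabla_q$ (both giving $\gamma - 1$ after multiplying by $\mu_0$, resp.\ $\mu$), and the resulting comparison between the Nygaard filtration $\Fil^{\bullet} M$ (defined via $\ptilde$) and $\Fil^{\bullet} N$ (defined via $[p]_q$). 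I expect the step for $H^1$ to be the main obstacle.

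For $k = 0$: since $\mu_0$ is a nonzerodivisor the condition $\nabla_0 x = 0$ is equivalent to $\gamma x = x$, and $\Fil^0 M = M$ because $\varphi(M) \subseteq M$, so $H^0(\pazs^{\bullet}(M)) = M^{\gamma = 1,\, \varphi = 1}$. As $\varphi$ and the $\Gamma_F$-action on $N$ commute and $M = N^{\FF_p^{\times}}$, this equals $(N^{\gamma = 1,\, \varphi = 1})^{\FF_p^{\times}}$. By Theorem~\ref{intro_thm:syntomic_blochkatoselmer_af+}, $N^{\gamma = 1,\, \varphi = 1}[1/p] = H^0(\pazs^{\bullet}(N))[1/p] \isomorphic H^0_f(G_F, V) = V^{G_F}$, on which $\FF_p^{\times} = \Gamma_F/\Gamma_0$, being a quotient of $G_F$, acts trivially; since $N^{\gamma = 1,\, \varphi = 1}$ is torsion free (as $T$ is a lattice) its $\FF_p^{\times}$-action is already trivial, whence $N^{\gamma = 1,\, \varphi = 1} \subseteq N^{\FF_p^{\times}} = M$ and $H^0(\pazs^{\bullet}(M)) = N^{\gamma = 1,\, \varphi = 1} = H^0(\pazs^{\bullet}(N))$. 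Inverting $p$ gives the case $k = 0$ (compare the proof of Lemma~\ref{lem:h0_syn_blochkatoselmer}).

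For $k = 1$: mirroring Proposition~\ref{prop:h1syn_ext1wach_af+}, I will show that $H^1(\pazs^{\bullet}(M))$ is canonically isomorphic to the group of Yoneda extensions of $S$ by $M$ in $(\varphi, \Gamma_0)\Mod_{S}^{\ptilde}$. A $1$-cocycle $(a, b) \in \Fil^{-p+1}M \oplus M$ yields $E = M \oplus S e$ sitting in $0 \to M \to E \to S \to 0$, with $\varphi$ and the $\Gamma_0$-action on $M$ unchanged and $\varphi_E(e) = e - b$, $\gamma_E(e) = e + \mu_0 a$; the commutation relation $\varphi_E \gamma_E = \gamma_E \varphi_E$ then reduces, via $\varphi(\mu_0) = \mu_0 \ptilde^{p-1}$, exactly to the cocycle equation $(1 - \ptilde^{p-1}\varphi)a = \nabla_0 b$, the filtration shift $\nabla_0(\Fil^k M) \subseteq \Fil^{k-p+1}M$ guarantees that $E$ satisfies the remaining axioms of Definition~\ref{defi:wachmod_s}, and coboundaries give precisely the split extensions. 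Since the equivalence of Theorem~\ref{intro_thm:wachmod_s_af+_equiv} is exact it induces $\Ext^1_{(\varphi, \Gamma_0)\Mod_{S}^{\ptilde}}(S, M) \isomorphic \Ext^1_{(\varphi, \Gamma_F)\Mod_{\AF^+}^{[p]_q}}(\AF^+, N)$, hence $H^1(\pazs^{\bullet}(M)) \isomorphic H^1(\pazs^{\bullet}(N))$, which by Theorem~\ref{intro_thm:syntomic_blochkatoselmer_af+} (see also Remark~\ref{rem:h1f_extclass}) is $[1/p]$-isomorphic to $H^1_f(G_F, V)$. The delicate point is the verification that the $(\varphi, \Gamma_0)$-module $E$ built from a cocycle genuinely lies in the Wach subcategory $(\varphi, \Gamma_0)\Mod_{S}^{\ptilde}$ (correct $\ptilde$-height, and $\Gamma_0$-action with the prescribed behaviour on the Nygaard filtration), and conversely that every such extension is trivialised by a cocycle: this is where the passage between the ``$\mu$-description'' over $\AF^+$ and the ``$\mu_0$-description'' over $S$, which is responsible for the shift by $p - 1$, must be handled carefully.

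For $k = 2$: following Proposition~\ref{prop:h2_syn_blochkatoselmer}, I will prove $H^2(\pazs^{\bullet}(M))[1/p] = 0$ by an explicit analysis of the second differential and of the action of $\gamma$ on $M$, exactly as in the $\AF^+$ case. This matches $H^2_f(G_F, V) = 0$, which holds by Theorem~\ref{intro_thm:dcrys_complex_bks} since $\cald^{\bullet}(\Dcrys(V))$ is concentrated in degrees $0$ and $1$. Combining the three isomorphisms, which are natural in $T$, and noting that $H^k(\pazs^{\bullet}(M)) = 0 = H^k_f(G_F, V)$ for $k \geq 3$ finishes the proof.
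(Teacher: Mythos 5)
Your treatment of $k=0$ and $k=1$ follows the paper's own route. For $k=1$ the paper likewise proves the $S$-analogue of Proposition \ref{prop:h1syn_ext1wach_af+} (this is Proposition \ref{prop:h1syn_ext1wach_s}) and then transports extension classes through the exact equivalence of Theorem \ref{thm:wachmod_s_af+_equiv}, exactly as you propose. For $k=0$ the paper identifies $\Fil^0 M \isomorphic (\Fil^0 N)^{\FF_p^{\times}}$ and invokes Lemma \ref{lem:h0_syn_blochkatoselmer}, whereas you deduce triviality of the $\FF_p^{\times}$-action on $N^{\varphi=1,\gamma=1}$ from torsion-freeness together with the rational identification with $V^{G_F}$; this works, but note that your intermediate claim that $\Fil^0 M = M$ ``because $\varphi(M)\subseteq M$'' holds only for effective $M$ --- what you actually need, and what is true, is that $\varphi(x)=x$ already forces $x \in \Fil^0 M$, so the $\Fil^0$ condition is vacuous once $\varphi=1$ is imposed.

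The genuine gap is at $k=2$. You propose to prove $H^2(\pazs^{\bullet}(M))[1/p]=0$ by an explicit analysis ``exactly as in the $\AF^+$ case'', but the proof of Proposition \ref{prop:h2_syn_blochkatoselmer} does not transfer verbatim to $S$: it first reduces to an effective Wach module by writing $\NF(V(r)) = \mu^{-r}\NF(V)(r)$ and then runs an induction on the order of the pole in $\mu$ (Lemma \ref{lem:xmuk_h2syn}), using the unit $\gamma(\mu)/(\chi(\gamma)\mu)$ and the invertibility of $\chi(\gamma)^{r-k+1}-1$. Over $S$ the only available twists rescale by powers of $\mu_0$, which is $\mu^{p-1}$ up to a unit of $\AF^+$, so an arbitrary Wach module over $S$ cannot be reduced to an effective one by such a twist, and the induction would have to be rebuilt with $\mu_0$, $\nabla_0$ and weight jumps of size $p-1$; none of this is addressed. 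The paper avoids the issue entirely: in Proposition \ref{prop:syntomic_stoaf+} it shows that $\pazs^{\bullet}(M)$ is isomorphic to the $\FF_p^{\times}$-invariant summand $D_0^{\bullet}$ of a complex $D^{\bullet} \isomorphic \pazs^{\bullet}(N) = \oplus_{i} D_i^{\bullet}$, so that $H^2(\pazs^{\bullet}(M)) \rightarrow H^2(\pazs^{\bullet}(N))$ is the inclusion of a direct summand, hence injective, and the vanishing then follows from Proposition \ref{prop:h2_syn_blochkatoselmer} applied to $N$. Since you already have the morphism of complexes $\pazs^{\bullet}(M)\rightarrow\pazs^{\bullet}(N)$ and the filtration comparison in hand, you should replace your $H^2$ step by this direct-summand argument, or else supply the nontrivial modifications needed to make the explicit computation work over $S$.
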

To prove Theorem \ref{intro_thm:syntomic_blochkatoselmer_s}, we first define a morphism of complexes $\pazs^{\bullet}(M) \rightarrow \pazs^{\bullet}(N)$, where $N = \AF^+ \otimes_S M$ is a Wach module over $\AF^+$ (see Theorem \ref{intro_thm:wachmod_s_af+_equiv}).
Then, using the $\FF_p^{\times}\textrm{-decomposition}$ of $N$ (see Remark \ref{rem:fpx_decomp}), we show that the natural map on cohomology $H^k(\pazs^{\bullet}(M)) \rightarrow H^k(\pazs^{\bullet}(N))$ is bijective for $k = 0, 1$ and injective for $k = 2$ (see Proposition \ref{intro_thm:wachmod_s_af+_equiv}).
Combining this with Theorem \ref{intro_thm:syntomic_blochkatoselmer_af+}, yields the claim.

\begin{rem}
	Note that $(M/\mu_0 M)[1/p]$ is a $\varphi\module$ over $F$ since $\ptilde = p \textmod \mu_0 S$, and $M/\mu_0 M$ is equipped with a filtration $\Fil^k(M/\mu_0 M)$ given as the image of $\Fil^k M$ under the surjection $M \twoheadrightarrow M/\mu_0 M$.
	We equip $(M/\mu_0 M)[1/p]$ with the induced filtration $\Fil^k((M/\mu_0 M)[1/p]) := \Fil^k(M/\mu_0 M)[1/p]$, and note that it is a filtered $\varphi\module$ over $F$.
	Then, in Theorem \ref{thm:mu0_deformation_dcrys} we show that $(M/\mu_0 M)[1/p] \isomorphic \Dcrys(V)$ as filtered $\varphi\modules$ over $F$.
	The preceding comparison enables us to define a morphism of complexes $\pazs^{\bullet}(M)[1/p] \rightarrow \cald^{\bullet}(\Dcrys(V))$, which induces a quasi-isomorphism (see Remark \ref{rem:syntomic_to_blochkato_s}).
	In particular, the complex $\pazs^{\bullet}(M)$ may be regarded as a ``lifting to $S$'' of the complex $\cald^{\bullet}(\Dcrys(V))$.
\end{rem}

\begin{rem}
	Recall that we fixed $p \geqslant 3$ at the beginning of Section \ref{subsec:syncompl_galcoh}.
	This is due to the fact that the methods developed in this paper only apply to the case $p \geqslant 3$.
	Indeed, for $p \geqslant 3$, note that in \eqref{intro_eq:gammaf_es} we have that $\Gamma_{\textrm{tor}} \isomorphic \FF_p^{\times}$, whereas for $p = 2$, we have that $\Gamma_{\textrm{tor}} \isomorphic \{\pm 1\}$ as groups.
	Therefore, our main technical result, the descent statement of Theorem \ref{intro_thm:wachmod_s_af+_equiv}, does not apply to the case $p=2$.
	Moreover, we do not know whether a claim analogous to Theorem \ref{intro_thm:wachmod_s_af+_equiv} holds for $p=2$ (even after replacing $\FF_p^{\times}$ with the group $\{\pm 1\}$).
	Furthermore, let us note that Theorem \ref{intro_thm:wachmod_s_af+_equiv} is obviously crucial for obtaining Theorem \ref{intro_thm:syntomic_blochkatoselmer_s}.
	Additionally, Theorem \ref{intro_thm:wachmod_s_af+_equiv} has also been used in the proof of Theorem \ref{intro_thm:syntomic_blochkatoselmer_af+} (see Lemma \ref{lem:h0_syn_blochkatoselmer}).
	Therefore, for $p=2$, to obtain a variation of Theorem \ref{intro_thm:syntomic_blochkatoselmer_af+}, we need a variation of Theorem \ref{intro_thm:wachmod_s_af+_equiv}, which remains elusive.
\end{rem}

\subsection{Outline of the paper}

This article consists of three main sections.
In Section \ref{sec:padicreps}, we quickly recall the necessary definitions and results on period rings, $\padic$ representations and Galois cohomology.
We begin by fixing some notations and recalling several period rings in Section \ref{subsec:period_rings}, and the theory of $\padic$ crystalline representations and $(\varphi, \Gamma)\modules$ in Section \ref{subsec:padicreps}.
In Section \ref{subsec:fontaine_herr_complex}, we recall the Fontaine--Herr complex and its relationship with the Galois cohomology of $\padic$ representations; in Section \ref{subsec:blochkatoselmer_groups} we recall the definition of Bloch--Kato Selmer groups and the construction of the Bloch--Kato complex from \eqref{intro_eq:dcrys_complex_bks}.
Section \ref{sec:wachmods} is devoted to the study of Wach modules and the Nygaard filtration on Wach modules.
In Section \ref{eq:wachmod_descent}, we first recall the definition of Wach modules over $\AF^+$ and $S$, and then prove the descent claim of Theorem \ref{intro_thm:wachmod_s_af+_equiv}.
In Section \ref{subsec:wachmods_to_crysreps}, we recall the relationship between Wach modules and crystalline representations, and in Section \ref{subsec:nyfil} we define the Nygaard filtration on Wach modules and prove several properties of the filtration that are to be used later.
In Section \ref{sec:syntomic_complex_galcoh}, we arrive at the main results on the computation of Galois cohomology.
We begin Section \ref{subsec:syntomic_complex_af+} by defining the syntomic complex over $\AF^+$ (see Definition \ref{intro_defi:syntomic_complex_af+}), and then state and prove Theorem \ref{intro_thm:syntomic_blochkatoselmer_af+}.
The proof for each cohomological degree $k = 0, 1, 2$ is carried out separately (see Lemma \ref{lem:h0_syn_blochkatoselmer}, Proposition \ref{prop:h1syn_ext1wach_af+} and Proposition \ref{prop:h2_syn_blochkatoselmer}).
Finally, in Section \ref{subsec:syntomic_complex_s} we define the syntomic complex over $S$ (see Definition \ref{intro_defi:syntomic_complex_s}) and prove Theorem \ref{intro_thm:syntomic_blochkatoselmer_s}.

\vspace{2mm}

\noindent \textbf{Acknowledgements.} 
The work presented here was partially carried out during my PhD at Universit\'e de Bordeaux.
I would like to sincerely thank my advisor Denis Benois for several discussions around the content of this article, and Takeshi Tsuji for helpful discussions around Theorem \ref{thm:wachmod_s_af+_equiv}.
I would also like to thank Luming Zhao for helpful remarks on a previous version of the article.
Finally, I would like to thank the referee for carefully reading the article and pointing out issues in the previous version, which helped in rectification of errors and improvement in the writing.
This research is partially supported by JSPS KAKENHI grant numbers 22F22711 and 22KF0094.

\section{\texorpdfstring{$\padic$}{-} representations}\label{sec:padicreps}

Let $p \geqslant 3$ be a fixed prime number, and let $\kappa$ denote a perfect field of characteristic $p$; set $O_F := W(\kappa)$ to be the ring of $p\textrm{-typical}$ Witt vectors with coefficients in $\kappa$, and $F := \Fr(O_F)$.
Let $\overline{F}$ denote a fixed algebraic closure of $F$, let $\CC_p := \widehat{\overline{F}}$ denote its $\padic$ completion, and $G_F := \Gal(\overline{F}/F)$ the absolute Galois group of $F$.
Moreover, let $\Finfty := \cup_n F(\mu_{p^n})$, and set $\Gamma_F := \Gal(F_{\infty}/F) \isomorphic \ZZ_p^{\times}$ and $H_F := \Gal(\overline{F}/\Finfty)$.
Note that the isomorphism $\chi \colon \Gamma_F \isomorphic \ZZ_p^{\times}$ is given via the $\padic$ cyclotomic character, and therefore, $\Gamma_F$ fits into the following exact sequence:
\begin{equation}\label{eq:gammaf_es}
	1 \longrightarrow \Gamma_0 \longrightarrow \Gamma_F \longrightarrow \Gamma_{\textrm{tor}} \longrightarrow 1,
\end{equation}
where we have that $\Gamma_0 \isomorphic 1 + p\ZZ_p$ and $\Gamma_{\textrm{tor}} \isomorphic \FF_p^{\times}$, and the projection map in \eqref{eq:gammaf_es} admits a section $\Gamma_{\textrm{tor}} \isomorphic \FF_p^{\times} \rightarrow \ZZ_p^{\times} \lisomorphic \Gamma_F$, where the second map is given as $a \mapsto [a]$, the Teichm\"uller lift of $a$.
We fix a topological generator $\gamma$ of $\Gamma_0$.

\begin{rem}\label{rem:fpx_decomp}
	Let $N$ be a linearly topologised compact $\ZZ_p\module$ admitting a continuous action of $\Gamma_F$.
	Then, from \cite[Section 3]{iwasawa}, the module $N$ admits an $\FF_p^{\times}\textrm{-decomposition}$ $N = \bigoplus_{i=0}^{p-1} N_i$, where $N_0 = N^{\FF_p^{\times}}$.
	Moreover, since $\Gamma_F$ is commutative, each $N_i$ is equipped with an induced continuous action of $\Gamma_0$.
\end{rem}

\subsection{Period rings}\label{subsec:period_rings}

In this section, we will quickly recall all the period rings used in this paper (see \cite{fontaine-phigamma, fontaine-corps-periodes} for details).
Let $\OFbar$ (resp.\ $\OFinfty$) denote the ring of integers of $\overline{F}$ (resp.\ $F_{\infty}$), and let $\OFbar^{\flat} := \lim_{x \mapsto x^p} \OFbar/p$ (resp.\ $\OFinfty^{\flat} := \lim_{x \mapsto x^p} \OFinfty/p$) denote its tilt (see \cite{fontaine-corps-periodes}).
Let us set $\Ainf(\OFinfty) := W(\OFinfty^{\flat})$ and $\Ainf(\OFbar) := W(\OFbar^{\flat})$ admitting the natural Frobenius on Witt vectors, and continuous (for the weak topology) actions of $\Gamma_F$ and $G_F$, respectively.
We fix $\mubar := \varepsilon-1$, where $\varepsilon := (1, \zeta_p, \zeta_{p^2}, \ldots)$ is in $\OFinfty^{\flat}$, and let $q = [\varepsilon]$, $\mu := q - 1$ and $\xi := \mu/\varphi^{-1}(\mu)$ in $\Ainf(\OFinfty)$.
Then, for any $g$ in $G_F$, we have that $g(1+\mu) = (1+\mu)^{\chi(g)}$, where $\chi$ is the $\padic$ cyclotomic character.
Moreover, note that we have a $G_F\equivariant$ surjective homomorphism of rings $\theta \colon \Ainf(\OFbar) \rightarrow \CC_p$ with $\kert \theta = \xi \Ainf(\OFbar)$.
The map $\theta$ restricts to a $\Gamma_F\equivariant$ surjective homomorphism of rings $\theta \colon \Ainf(\OFinfty) \rightarrow O_{\widehat{F}_{\infty}}$ with $\kert \theta = \xi \Ainf(\OFinfty)$.

We set $\Acrys(\OFinfty) := \Ainf(\OFinfty)\langle \xi^k/k!, k \in \NN \rangle$, and note that $t := \log(1+\mu) = \sum_{k \in \NN} (-1)^k \tfrac{\mu^{k+1}}{k+1}$ converges in $\Acrys(\OFinfty)$.
The ring $\Acrys(\OFinfty)$ is $p\torsion$ free and $t\torsion$ free, and we set $\Bcrys^+(\OFinfty) := \Acrys(\OFinfty)[1/p]$ and $\Bcrys(\OFinfty) := \Bcrys^+(\OFinfty)[1/t]$.
These rings are equipped with a Frobenius endomorphism $\varphi$, a continuous $\Gamma_F\action$ and a decreasing filtration; the ring $\Acrys^+(\OFinfty)$ and $\Bcrys^+(\OFinfty)$ are further equipped with an appropriate extension of the map $\theta$.
Next, let us set $\BdR^+(\OFinfty) := \lim_n (\Ainf(\OFinfty)[1/p])/(\kert \theta)^n$ and $\BdR(\OFinfty) := \BdR^+(\OFinfty)[1/t]$.
These rings are equipped with a $\Gamma_F\action$ and a decreasing filtration; the ring $\BdR^+(\OFinfty)$ is further equipped with an appropriate extension of the map $\theta$.
Moreover, we have $(\varphi, \Gamma_F)\equivariant$ and filtration compatible injective homomorphism of $F\textrm{-algebras}$ $\Bcrys^+(\OFinfty) \rightarrow \BdR^+(\OFinfty)$ and $\Bcrys(\OFinfty) \rightarrow \BdR(\OFinfty)$.
One can define variations of all the aforementioned rings and their properties over $\OFbar$ as well.
Furthermore, from \cite[Th\'eor\`eme 5.3.7]{fontaine-corps-periodes} recall that we have the following $(\varphi, G_F)\equivariant$ \textit{fundamental exact sequence}:
\begin{equation}\label{eq:fes_bcris}
	0 \longrightarrow \QQ_p \longrightarrow \Fil^0 \Bcrys(\OFbar) \xrightarrow{\hspace{1mm} 1-\varphi \hspace{1mm}} \Bcrys(\OFbar) \longrightarrow 0.
\end{equation}

Let us set $\AF^+ := O_F\llbracket \mu \rrbracket$, and we equip it with a Frobenius endomorphism $\varphi$, given by the Witt vector Frobenius on $O_F$ and by setting $\varphi(\mu) = (1+\mu)^p-1$, and an $O_F\linear$ and continuous action of $\Gamma_F$ given as $g(\mu) = (1+\mu)^{\chi(g)}-1$, for any $g$ in $\Gamma_F$.
Note that we have a natural injective homomorphism of rings $\AF^+ \hookrightarrow \Ainf(\OFinfty)$ compatible with Frobenius and $\Gamma_F\action$.
Set $\AF := \AF^+[1/\mu]^{\wedge}$, where ${}^{\wedge}$ denotes the $\padic$ completion.
The Frobenius endomorphism $\varphi$ and the continuous action of $\Gamma_F$ on $\AF^+$ naturally extend to respective actions on $\AF$.
Let $W\big(\CC_p^{\flat}\big)$ denote the ring of $p\textrm{-typical}$ Witt vectors with coefficients in $\CC_p^{\flat} := \Fr(\OFbar^{\flat})$, and note that $W(\CC_p^{\flat})$ is equipped with the natural Frobenius on Witt vectors and a continuous (for the weak topology) action of $G_F$.
The natural $(\varphi, \Gamma_F)\equivariant$ injective homomorphism $\AF^+ \hookrightarrow \Ainf(\OFinfty)$ extends to a natural $(\varphi, \Gamma_F)\equivariant$ injective homomorphism of rings $\AF \hookrightarrow W(\CC_p^{\flat})^{H_F} = W(F_{\infty}^{\flat})$.

Next, we recall some definitions and observations from \cite[Section 3.1]{abhinandan-prismatic-wach} by fixing the following elements inside $\AF^+$:
\begin{equation}\label{eq:pq_mu0_ptilde_defi}
	\begin{aligned}
		[p]_q &:= \tfrac{q^p-1}{q-1} = \tfrac{\varphi(\mu)}{\mu},\\
		\mu_0 &:= \textstyle\sum_{a \in \FF_p^{\times}} ((1+\mu)^{[a]}-1) = -p + \textstyle\sum_{a \in \FF_p} (1+\mu)^{[a]},\\
		\ptilde &:= \mu_0 + p = \textstyle\sum_{a \in \FF_p} (1+\mu)^{[a]}.
	\end{aligned}
\end{equation}
From loc.\ cit.\ recall that the element $\ptilde$ is the product of $[p]_q$ with a unit in $\AF$, and the element $\mu_0$ is the product of $\mu^{p-1}$ with a unit in $\AF^+$.
Now, we consider the subring $S := O_F\llbracket \mu_0 \rrbracket = (\AF^+)^{\FF_p^{\times}} \subset \AF^+$, which is stable under the action of $\varphi$ and $\Gamma_0$; we equip $S$ with the induced Frobenius endomorphism $\varphi$ and the induced continuous action of $\Gamma_0$.
Moreover, from loc.\ cit., for any $g$ in $\Gamma_0$, we have that $(g-1)\mu_0$ is an element of $\ptilde \mu_0 S$, and we may write $\varphi(\mu_0) = u \mu_0 \ptilde^{p-1}$, for some unit $u$ in $S$.
Furthermore, we have that $\mu_0 S = (\mu \AF^+)^{\FF_p^{\times}}$, and the natural $(\varphi, \Gamma_F)\equivariant$ injective map $S \rightarrow \AF^+$ is faithfully flat and finite of degree $p-1$.
Finally, we set $A_{F, 0} := S[1/\mu_0]^{\wedge}$, where ${}^{\wedge}$ denotes the $\padic$ completion, and note that $A_{F, 0}$ is equipped with an induced Frobenius endomorphism $\varphi$ and an induced continuous action of $\Gamma_0$.
In particular, we have $(\varphi, \Gamma_F)\equivariant$ homomorphism $A_{F,0} = \AF^{\FF_p^{\times}} \hookrightarrow \AF$.

\begin{rem}
	For consistency in notation, one should denote the ring $O_F\llbracket \mu_0 \rrbracket$ as $A_{F, 0}^+$.
	However, this ring shall appear frequently in the rest of the text and the aforementioned notation is too clunky.
	So we have chosen to denote the ring $O_F\llbracket \mu_0 \rrbracket$ simply by $S$.
\end{rem}

\begin{lem}\label{lem:sptildeinv_pid}
	The ring $S[1/\ptilde]$ is a principal ideal domain.
\end{lem}
\begin{proof}
	Note that $S$ is a two-dimensional regular local ring, so it is a unique factorisation domain (see \cite[\href{https://stacks.math.columbia.edu/tag/0FJH}{Tag 0FJH}]{stacks-project}).
	As $S[1/\ptilde]$ is a localisation of $S$, it is also a unique factorisation domain.
	Now, note that $\ptilde$ is in the maximal ideal $(p, \mu_0) \subset S$, so the Krull dimension of $S[1/\ptilde]$ is at most one.
	Moreover, $\mu_0S[1/\ptilde]$ is a non-zero prime ideal of $S[1/\ptilde]$, so the Krull dimension of $S[1/\ptilde]$ is at least one.
	In particular, we see that $S[1/\ptilde]$ is a one-dimensional unique factorisation domain, hence a principal ideal domain.
\end{proof}

\subsection{\texorpdfstring{$\padic$}{-} representations}\label{subsec:padicreps}

Let $T$ be a finite free $\ZZ_p\representation$ of $G_F$.
By the theory of \'etale $(\varphi, \Gamma_F)\modules$ (see \cite{fontaine-phigamma}), we may functorially associate to $T$ a finite free \'etale $(\varphi, \Gamma_F)\module$ $\DF(T)$ over $\AF$ of rank $= \rank_{\ZZ_p} T$.
Moreover, by taking $\Gamma_{\textrm{tor}}\textrm{-invariants}$ of $\DF(T)$, we obtain a finite free \'etale $(\varphi, \Gamma_0)\module$ $D_{F, 0}(T) := \DF(T)^{\Gamma_{\textrm{tor}}}$ over $A_{F, 0}$ of rank $= \rank_{\ZZ_p} T$.
These constructions are functorial in $\ZZ_p\textrm{-representations}$, and induce exact equivalences of $\otimes\textrm{-categories}$
\begin{equation}\label{eq:rep_phigamma}
	\begin{aligned}
		\Rep_{\ZZ_p}(G_F) &\isomorphic (\varphi, \Gamma_F)\Mod_{\AF}^{\etale},\\
		\Rep_{\ZZ_p}(G_F) &\isomorphic (\varphi, \Gamma_0)\Mod_{A_{F, 0}}^{\etale},
	\end{aligned}
\end{equation}
with an exact $\otimes\textrm{-compatible}$ quasi-inverse given as $\TF(D) := (W(\CC^{\flat}) \otimes_{\AF} D)^{\varphi=1}$ and $\TF(D_0) := (W(\CC^{\flat}) \otimes_{A_{F, 0}} D_0)^{\varphi=1}$, respectively.
Similar statements are also true for $\padic$ representations of $G_F$.

Next, from the $\padic$ Hodge theory of $G_F$ (see \cite{fontaine-reps}), one may attach to a $\padic$ representation $V$, a filtered $\varphi\module$ over $F$ of rank less than or equal to $\dim_{\QQ_p} V$, and given as $\Dcrys(V) := (\Bcrys(\OFbar) \otimes_{\QQ_p} V)^{G_F}$.
The representation $V$ is said to be \textit{crystalline} if the natural map $\Bcrys(\OFbar) \otimes_F \Dcrys(V) \rightarrow \Bcrys(\OFbar) \otimes_{\QQ_p} V$ is an isomorphism, or equivalently, $\dim_F \Dcrys(V) = \dim_{\QQ_p} V$.
Restricting $\Dcrys$ to the full subcategory of crystalline representations of $G_F$, and writing $\MF_F\wa(\varphi)$ for the category of weakly admissible filtered $\varphi\modules$ over $F$ (see \cite{fontaine-colmez}), we obtain an exact equivalence of $\otimes\textrm{-categories}$ (see \cite[Th\'eor\`eme A]{fontaine-colmez}):
\begin{equation}\label{eq:crys_equivalence}
	\Dcrys: \Rep_{\QQ_p}^{\crys}(G_F) \isomorphic \MF_F\wa(\varphi),
\end{equation}
with an exact $\otimes\textrm{-compatible}$ quasi-inverse given as $\Vcrys(D) := (\Fil^0(\Bcrys(\OFbar) \otimes_F D))^{\varphi=1}$.

\subsection{Galois cohomology and Fontaine--Herr complex}\label{subsec:fontaine_herr_complex}

Let $T$ be a $\ZZ_p\textrm{-representation}$ of $G_F$, and let $D := D_{F, 0}(T)$ denote the associated \'etale $(\varphi, \Gamma_F)\textrm{-module}$ over $A_{F, 0}$.
In \cite{herr-cohomologie}, Herr defined a three term complex in terms of $D$, which computes the continuous $G_F\cohomology$ of $T$.
More precisely, recall that $\gamma$ is a generator of $\Gamma_0$, and consider the following complex:
\begin{equation}\label{eq:fontaine_herr_complex}
	\calc^{\bullet}(D) \colon D \xrightarrow{\hspace{1mm}(\gamma-1, 1-\varphi)\hspace{1mm}} D \oplus D \xrightarrow{\hspace{3mm}(1-\varphi, \gamma-1)^{\intercal}\hspace{3mm}} D,
\end{equation}
where the first map is $x \mapsto ((\gamma-1)x, (1-\varphi)y)$, and the second map is $(x, y) \mapsto (1-\varphi)x - (\gamma-1)y$.
Then, the complex $\calc^{\bullet}(D)$ computes the continuous $G_F\cohomology$ of $T$ in each cohomological degree, i.e.\ for each $k \in \NN$, we have natural (in $T$) isomorphims $H^k(\calc^{\bullet}(D)) \isomorphic H^k_{\cont}(G_F, T)$.
In particular, it is clear that $H^k_{\cont}(G_F, T) = 0$, for $k \geqslant 3$.
To ease notations, from now onwards we shall write $H^k(G_F, T)$ instead of $H^k_{\cont}(G_F, T)$.

Note that for a $\ZZ_p\representation$ $T$ of $G_F$, the space $H^1(G_F, T)$ classifies all extension classes of $\ZZ_p$ by $T$ in the category of $\ZZ_p\textrm{-representations}$ of $G_F$.
Similarly, for an \'etale $(\varphi, \Gamma_0)\module$ $D$, the space $H^1(\calc^{\bullet}(D))$ classifies all extension classes of $A_{F, 0}$ by $D$ in the category of \'etale $(\varphi, \Gamma_0)\modules$ over $A_{F, 0}$.
Therefore, by the exact equivalence in \eqref{eq:rep_phigamma}, we have natural isomorphisms
\begin{equation*}
	H^1(G_F, T) \isomorphic \Ext^1_{\Rep_{\ZZ_p}(G_F)}(\ZZ_p, T) \isomorphic \Ext^1_{(\varphi, \Gamma_F)\Mod_{A_{F, 0}}^{\etale}}(A_{F, 0}, D) \lisomorphic H^1(\calc^{\bullet}(D)).
\end{equation*}

\subsection{Bloch--Kato Selmer groups}\label{subsec:blochkatoselmer_groups}

In this section, we will recall the definition of Bloch--Kato Selmer groups from \cite{bloch-kato}.
Let $V$ be a $\padic$ crystalline representation of $G_F$.
Then, we have a natural $G_F\equivariant$ morphism of $\QQ_p\textrm{-vector}$ spaces $V \rightarrow \Bcrys(\OFbar) \otimes_{\QQ_p} V$, sending $x \mapsto 1 \otimes x$.
By considering the continuous $G_F\cohomology$ groups, we obtain natural homomorphisms $H^k(G_F, V) \rightarrow H^k(G_F, \Bcrys(\OFbar) \otimes_{\QQ_p} V)$, for each $k \in \NN$.

\begin{defi}
	Define the \textit{Bloch--Kato Selmer groups} of $V$ denoted $H^k_f(G_F, V) \subset H^k(G_F, V)$, for $k \in \NN$, by setting
	\begin{equation*}
		H^k_f(G_F, V) := 
			\begin{cases}
				H^0(G_F, V), &\quad\textrm{if } k = 0\\
				\kert\big(H^1(G_F, V) \rightarrow H^1(G_F, \Bcrys(\OFbar) \otimes_{\QQ_p} V)\big), &\quad\textrm{if } k = 1\\
				0, &\quad\textrm{if } k \geqslant 2.
			\end{cases}
	\end{equation*}
\end{defi}

\begin{rem}\label{rem:h1f_extclass}
	For $k \in \NN$, the subspace $H^k_f(G_F, V) \subset H^k(G_F, V)$ are also referred to as the \textit{crystalline part of the Galois cohomology of $V$}.
	Notably, the subspace $H^1_f(G_F, V) \subset H^1(G_F, V)$ classifies all crystalline extension classes of $\QQ_p$ by $V$, i.e.\ we have natural isomorphisms
	\begin{equation*}
		H^1_f(G_F, V) \isomorphic \Ext^1_{\Rep_{\QQ_p}^{\crys}(G_F)}(\QQ_p, V) \isomorphic \Ext^1_{\MF_F\wa(\varphi)}(F, \Dcrys(V)),
	\end{equation*}
	where the last isomorphism follows from the exactness of $\Dcrys$ and $\Vcrys$ (see Section \ref{subsec:padicreps}).
\end{rem}

Note that we have a natural $\QQ_p\linear$ and $G_F\equivariant$ morphism $V \rightarrow \Fil^0 \Bcrys(\OFbar) \otimes_{\QQ_p} V$, sending $x \mapsto 1 \otimes x$, and it induces a natural homomorphism of abelian groups $H^1(G_F, V) \rightarrow H^1(G_F, \Fil^0\Bcrys(\OFbar) \otimes_{\QQ_p} V)$.
\begin{prop}\label{prop:hif_kerfil0}
	The following natural map is an isomorphism:
	\begin{equation*}
		\kert\big(H^1(G_F, V) \rightarrow H^1(G_F, \Fil^0 \Bcrys(\OFbar) \otimes_{\QQ_p} V)\big) \isomorphic H^1_f(G_F, V).
	\end{equation*}
\end{prop}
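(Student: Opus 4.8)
The assertion is that replacing $\Bcrys(\OFbar)$ by $\Fil^0 \Bcrys(\OFbar)$ in the definition of $H^1_f$ does not change the kernel, i.e.\ that
\[
\kert\big(H^1(G_F, V) \to H^1(G_F, \Fil^0\Bcrys(\OFbar)\otimes_{\QQ_p} V)\big) = \kert\big(H^1(G_F, V) \to H^1(G_F, \Bcrys(\OFbar)\otimes_{\QQ_p} V)\big).
\]
The plan is to compare the two target cohomology groups via the short exact sequence of $G_F$-modules
\[
0 \longrightarrow \Fil^0\Bcrys(\OFbar)\otimes_{\QQ_p} V \longrightarrow \Bcrys(\OFbar)\otimes_{\QQ_p} V \longrightarrow (\Bcrys(\OFbar)/\Fil^0\Bcrys(\OFbar))\otimes_{\QQ_p} V \longrightarrow 0,
\]
obtained by tensoring the filtration sequence with the (finite-dimensional, hence flat) $\QQ_p$-vector space $V$. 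Since the composite $V \to \Fil^0\Bcrys(\OFbar)\otimes V \to \Bcrys(\OFbar)\otimes V$ is the canonical map, the two kernels visibly satisfy one inclusion (an element killed in $H^1(G_F, \Fil^0\Bcrys(\OFbar)\otimes V)$ is killed in $H^1(G_F, \Bcrys(\OFbar)\otimes V)$), so the real content is the reverse inclusion.

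First I would take the long exact cohomology sequence associated to the displayed short exact sequence. The reverse inclusion will follow once I know that the connecting-type comparison is controlled, and concretely it suffices to show that
\[
H^0\big(G_F, (\Bcrys(\OFbar)/\Fil^0\Bcrys(\OFbar))\otimes_{\QQ_p} V\big) \longrightarrow H^1\big(G_F, \Fil^0\Bcrys(\OFbar)\otimes_{\QQ_p} V\big)
\]
is surjective — equivalently, that $H^1(G_F, \Fil^0\Bcrys(\OFbar)\otimes V) \to H^1(G_F, \Bcrys(\OFbar)\otimes V)$ is injective. Chasing the long exact sequence, a class in the left-hand kernel above maps to $0$ in $H^1(G_F, \Bcrys(\OFbar)\otimes V)$, hence lifts to $H^0(G_F, (\Bcrys/\Fil^0)\otimes V)$; I then need to lift that $H^0$-class back to an element of $V$ modulo something that dies, which is exactly where the structure of $\Dcrys(V)$ and the $G_F$-cohomology of the graded pieces $\gr^i\Bcrys(\OFbar) \cong \CC_p(i)$ enters. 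The standard input is Tate's computation: $H^0(G_F, \CC_p(i)) = 0$ and $H^1(G_F, \CC_p(i)) = 0$ for $i \neq 0$, while for $i=0$ these are $F$ and (a one-dimensional space) respectively. Using the filtration on $\Bcrys(\OFbar)/\Fil^0\Bcrys(\OFbar)$ with graded pieces $\CC_p(i)$ for $i < 0$, and the fact that $V$ is crystalline so that Sen/Hodge--Tate weights are bounded, I would deduce $H^0(G_F, (\Bcrys(\OFbar)/\Fil^0\Bcrys(\OFbar))\otimes V) \cong H^0(G_F, \BdR(\OFbar)/\BdR^+(\OFbar)\otimes V)$ up to a term that surjects appropriately, and more to the point that the needed surjectivity onto $H^1(G_F, \Fil^0\Bcrys(\OFbar)\otimes V)$ holds.

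Here is the cleaner route I actually expect to use, avoiding a lengthy graded-pieces analysis: factor the inclusion $\Fil^0\Bcrys(\OFbar) \hookrightarrow \Bcrys(\OFbar)$ through $\BdR^+(\OFbar) \hookrightarrow \BdR(\OFbar)$ using the filtration-compatible map $\Bcrys(\OFbar) \to \BdR(\OFbar)$ and $\Fil^0\Bcrys(\OFbar) = \Bcrys(\OFbar) \cap \BdR^+(\OFbar)$. One reduces, by Bloch--Kato's original argument (\cite[Section 3]{bloch-kato}), the claim to the two statements: (i) $H^1(G_F, \Fil^0\Bcrys(\OFbar)\otimes V) \to H^1(G_F, \BdR^+(\OFbar)\otimes V)$ and (ii) the compatibility of these with passing to $\Bcrys$ and $\BdR$; the key exactness is that $\Bcrys(\OFbar)/\Fil^0 \to \BdR(\OFbar)/\BdR^+(\OFbar)$ is an isomorphism on the relevant cohomology after tensoring with $V$, together with $H^i(G_F, t^j\BdR^+(\OFbar)/t^{j+1}\BdR^+(\OFbar)\otimes V) = H^i(G_F, \CC_p(j)\otimes V)$ vanishing for $i=0,1$ in the range that matters. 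I would assemble these using the fundamental exact sequence \eqref{eq:fes_bcris} (which gives control of $H^\bullet(G_F, \Bcrys(\OFbar)\otimes V)$ in terms of $V$ and $\Bcrys(\OFbar)\otimes V$ itself) and Tate's theorem on $H^\bullet(G_F, \CC_p(j))$.

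\textbf{Main obstacle.} The crux is establishing the surjectivity $H^0(G_F,(\Bcrys(\OFbar)/\Fil^0\Bcrys(\OFbar))\otimes V) \twoheadrightarrow H^1(G_F,\Fil^0\Bcrys(\OFbar)\otimes V)$, equivalently the injectivity of $H^1(G_F, \Fil^0\Bcrys(\OFbar)\otimes V) \to H^1(G_F, \Bcrys(\OFbar)\otimes V)$. This is genuinely the heart of Bloch--Kato's comparison and requires the interplay of the crystalline condition on $V$ (to pin down Hodge--Tate weights and invoke $\Dcrys$), the $\varphi$-module structure of $\Bcrys(\OFbar)$ via the fundamental exact sequence, and Tate's vanishing theorems for $H^\bullet(G_F,\CC_p(j))$ with $j\neq 0$. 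I expect the rest — the elementary inclusion, the flatness of $-\otimes_{\QQ_p}V$, and the long exact sequence bookkeeping — to be routine.
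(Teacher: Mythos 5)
Your overall strategy coincides with the paper's: both reduce the statement to the injectivity of $H^1(G_F, \Fil^0\Bcrys(\OFbar)\otimes_{\QQ_p}V)\to H^1(G_F,\Bcrys(\OFbar)\otimes_{\QQ_p}V)$ via the short exact sequence $0\to\Fil^0\Bcrys(\OFbar)\to\Bcrys(\OFbar)\to\Bcrys(\OFbar)/\Fil^0\Bcrys(\OFbar)\to 0$ tensored with $V$. However, your homological bookkeeping is backwards. You assert (twice) that this injectivity is equivalent to the \emph{surjectivity} of the connecting map $H^0(G_F,(\Bcrys(\OFbar)/\Fil^0\Bcrys(\OFbar))\otimes V)\to H^1(G_F,\Fil^0\Bcrys(\OFbar)\otimes V)$. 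In the long exact sequence the image of that connecting map \emph{is} the kernel of the map on $H^1$, so its surjectivity would force the map on $H^1$ to be zero, not injective. What you actually need is that the connecting map vanishes, equivalently that $\Dcrys(V)=H^0(G_F,\Bcrys(\OFbar)\otimes V)\to H^0(G_F,(\Bcrys(\OFbar)/\Fil^0\Bcrys(\OFbar))\otimes V)$ is surjective.

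That surjectivity is precisely the content you flag as the ``main obstacle'' and leave unresolved, gesturing at Tate's theorems, the fundamental exact sequence, and a citation to Bloch--Kato; so the proposal does not close the argument. The paper's resolution is short and does not need Tate's vanishing of $H^1(G_F,\CC_p(j))$: using $\BdR(\OFbar)=\Fil^0\BdR(\OFbar)+\Bcrys(\OFbar)^{\varphi=1}$ one gets $G_F\equivariant$ isomorphisms $\Bcrys(\OFbar)/\Fil^0\Bcrys(\OFbar)\isomorphic\BdR(\OFbar)/\Fil^0\BdR(\OFbar)\isomorphic\oplus_{k<0}\CC_p\cdot t^k$, whence $H^0(G_F,(\Bcrys(\OFbar)/\Fil^0\Bcrys(\OFbar))\otimes_{\QQ_p}V)=\oplus_{k<0}\gr^k\Dcrys(V)$; the injection $\Dcrys(V)/\Fil^0\Dcrys(V)\hookrightarrow\oplus_{k<0}\gr^k\Dcrys(V)$ coming from the left exactness of $H^0$ is then a map of $F$-vector spaces of equal finite dimension, since $\dim_F\Fil^0\Dcrys(V)+\dim_F(\oplus_{k<0}\gr^k\Dcrys(V))=\dim_F\Dcrys(V)$, hence bijective. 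You should correct the direction of the reduction and supply this (or an equivalent) dimension count; as written the proof has a genuine gap at its central step.
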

\begin{proof}	
	By the naturality of the action of $G_F$, we have the following commutative diagram:
	\begin{equation}\label{eq:h1_factors_fil0}
		\begin{tikzcd}
			H^1(G_F, V) \arrow[r] \arrow[rd] & H^1(G_F, \Fil^0 \Bcrys(\OFbar) \otimes_{\QQ_p} V) \arrow[d]\\
			& H^1(G_F, \Bcrys(\OFbar) \otimes_{\QQ_p} V).
		\end{tikzcd}
	\end{equation}
	To show the claim, it is enough to show that the right vertical arrow in \eqref{eq:h1_factors_fil0} is injective.
	Now, consider the following exact sequence:
	\begin{equation*}
		0 \longrightarrow \Fil^0 \Bcrys(\OFbar) \longrightarrow \Bcrys(\OFbar) \longrightarrow \Bcrys(\OFbar)/\Fil^0 \Bcrys(\OFbar) \longrightarrow 0.
	\end{equation*}
	Upon tensoring this exact sequence with $V$ and taking continuous $G_F\cohomology$, we obtain an injective map of $F\textrm{-vector spaces}$
	\begin{equation}\label{eq:dcrys_modfil0_inj}
		\Dcrys(V)/\Fil^0 \Dcrys(V) \longrightarrow \big(\Bcrys(\OFbar)/\Fil^0 \Bcrys(\OFbar) \otimes_{\QQ_p} V\big)^{G_F},
	\end{equation}
	and it is clear that the vertical map in \eqref{eq:h1_factors_fil0} is injective if and only if \eqref{eq:dcrys_modfil0_inj} is bijective.
	Now, as we have $\BdR(\OFbar) = \Fil^0 \BdR(\OFbar) + \Bcrys(\OFbar)^{\varphi=1}$ (see \cite[Proposition 1.17]{bloch-kato}), therefore, we obtain $G_F\equivariant$ isomorphisms
	\begin{equation*}
		\Bcrys(\OFbar)/\Fil^0 \Bcrys(\OFbar) \isomorphic \BdR(\OFbar)/\Fil^0 \BdR(\OFbar) \isomorphic \oplus_{k<0} \CC_p \cdot t^k,
	\end{equation*}
	where the last isomorphism follows from \cite[Section 1.5.5]{fontaine-corps-periodes}.
	Therefore, the codomain of the map \eqref{eq:dcrys_modfil0_inj} may be written as $\big(\Bcrys(\OFbar)/\Fil^0 \Bcrys(\OFbar) \otimes_{\QQ_p} V\big)^{G_F} = (\oplus_{k < 0} t^k \CC_p \otimes_{\QQ_p} V)^{G_F} = \oplus_{k<0} \gr^k \Dcrys(V)$.
	Counting dimensions, we note that we have 
	\begin{equation*}
		\dim_F(\Fil^0 \Dcrys(V)) + \dim_F(\oplus_{k<0} \gr^k \Dcrys(V)) = \dim_F \Dcrys(V),
	\end{equation*}
	so the domain and the codomain of the $F\linear$ injective map in \eqref{eq:dcrys_modfil0_inj} have the same dimension.
	Hence, \eqref{eq:dcrys_modfil0_inj} is bijective, thus allowing us to conclude.
\end{proof}

The following claim can also be obtained as a consequence of \cite[Corollary 3.8.4]{bloch-kato}.
\begin{cor}\label{cor:dcrys_complex_bks}
	Let $V$ be a $\padic$ crystalline representation of $G_F$.
	Then, the following complex:
	\begin{equation}\label{eq:dcrys_complex_bks}
		\cald^{\bullet}(\Dcrys(V)) \colon \Fil^0 \Dcrys(V) \xrightarrow{\hspace{1mm} 1-\varphi \hspace{1mm}} \Dcrys(V),
	\end{equation}
	computes the crystalline part of the Galois cohomology of $V$, i.e.\ $H^k(\cald^{\bullet}(\Dcrys(V))) \isomorphic H^k_f(G_F, V)$, for each $k \in \NN$.
\end{cor}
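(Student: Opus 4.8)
The plan is to realize $\cald^{\bullet}(\Dcrys(V))$ as the initial segment of the long exact cohomology sequence attached to the fundamental exact sequence \eqref{eq:fes_bcris}. Since $V$ is a finite-dimensional $\QQ_p$-vector space, tensoring \eqref{eq:fes_bcris} with $V$ over $\QQ_p$ is exact and $G_F$-equivariant, producing a short exact sequence of topological $G_F$-modules
\begin{equation*}
	0 \longrightarrow V \longrightarrow \Fil^0\Bcrys(\OFbar) \otimes_{\QQ_p} V \xrightarrow{\hspace{1mm} 1-\varphi \hspace{1mm}} \Bcrys(\OFbar) \otimes_{\QQ_p} V \longrightarrow 0,
\end{equation*}
where $\Fil^0\Bcrys(\OFbar) \otimes_{\QQ_p} V$ denotes $\Fil^0$ of $\Bcrys(\OFbar) \otimes_{\QQ_p} V$ in the sense used in the proof of Proposition \ref{prop:hif_kerfil0}. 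Passing to continuous $G_F$-cohomology (legitimate since all terms are well-behaved topological $G_F$-modules and $V$ is finite-dimensional) yields the exact sequence
\begin{equation*}
	0 \to V^{G_F} \to \big(\Fil^0\Bcrys(\OFbar)\otimes_{\QQ_p}V\big)^{G_F} \xrightarrow{\hspace{1mm} 1-\varphi \hspace{1mm}} \big(\Bcrys(\OFbar)\otimes_{\QQ_p}V\big)^{G_F} \xrightarrow{\hspace{1mm}\delta\hspace{1mm}} H^1(G_F, V) \to H^1\big(G_F, \Fil^0\Bcrys(\OFbar)\otimes_{\QQ_p}V\big).
\end{equation*}

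The next step is to identify the two invariant modules in the middle with the terms of $\cald^{\bullet}(\Dcrys(V))$. Since $V$ is crystalline, $\big(\Bcrys(\OFbar)\otimes_{\QQ_p}V\big)^{G_F} = \Dcrys(V)$, and the endomorphism induced by $\varphi$ (acting through $\Bcrys(\OFbar)$) is the Frobenius of the filtered $\varphi$-module. For the $\Fil^0$ term, writing $\Fil^0\Bcrys(\OFbar) = \Bcrys(\OFbar) \cap \Fil^0\BdR(\OFbar)$ inside $\BdR(\OFbar)$ and using flatness of $V$ over $\QQ_p$ gives $\big(\Fil^0\Bcrys(\OFbar)\otimes_{\QQ_p}V\big)^{G_F} = \Dcrys(V) \cap \Fil^0\DdR(V)$; as $F$ is unramified we have $\Dcrys(V) = \DdR(V)$ with the filtration on $\Dcrys(V)$ being precisely the one transported from $\DdR(V)$, so this equals $\Fil^0\Dcrys(V)$. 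Under these identifications the map $1-\varphi$ in the exact sequence above becomes exactly the differential $1-\varphi \colon \Fil^0\Dcrys(V) \to \Dcrys(V)$ of $\cald^{\bullet}(\Dcrys(V))$.

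It then remains to read off cohomology. Exactness at the left gives $H^0(\cald^{\bullet}(\Dcrys(V))) = \kert\big(1-\varphi \colon \Fil^0\Dcrys(V) \to \Dcrys(V)\big) = V^{G_F} = H^0_f(G_F, V)$. In degree one, the connecting map $\delta \colon \Dcrys(V) \to H^1(G_F, V)$ has kernel the image of $1-\varphi$, i.e.\ $(1-\varphi)\Fil^0\Dcrys(V)$, hence induces an injection $H^1(\cald^{\bullet}(\Dcrys(V))) = \Dcrys(V)/(1-\varphi)\Fil^0\Dcrys(V) \hookrightarrow H^1(G_F, V)$ whose image is $\kert\big(H^1(G_F, V) \to H^1(G_F, \Fil^0\Bcrys(\OFbar)\otimes_{\QQ_p}V)\big)$; by Proposition \ref{prop:hif_kerfil0} this is precisely $H^1_f(G_F, V)$, so $H^1(\cald^{\bullet}(\Dcrys(V))) \isomorphic H^1_f(G_F, V)$. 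For $k \geq 2$ both sides vanish, the complex being concentrated in degrees $0$ and $1$ and $H^k_f(G_F, V) = 0$ by definition. Finally, the fundamental exact sequence, the tensoring by $V$, the long exact sequence, and each comparison isomorphism are functorial in $V$, so the resulting isomorphisms $H^k(\cald^{\bullet}(\Dcrys(V))) \isomorphic H^k_f(G_F, V)$ are natural, as required.

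The step I expect to demand the most care is the identification of the $\Fil^0$ term: one must track which filtration on $\Bcrys(\OFbar)\otimes_{\QQ_p}V$ is being used --- the ``naive'' one induced from $\Bcrys(\OFbar)$, as in Proposition \ref{prop:hif_kerfil0}, rather than the tensor-product filtration built from the Hodge filtration of $\Dcrys(V)$ --- and verify that taking $G_F$-invariants of the composite $\Fil^0\Bcrys(\OFbar)\otimes_{\QQ_p}V \hookrightarrow \Bcrys(\OFbar)\otimes_{\QQ_p}V \xrightarrow{1-\varphi} \Bcrys(\OFbar)\otimes_{\QQ_p}V$ really recovers the differential of $\cald^{\bullet}(\Dcrys(V))$; note that $\varphi$ does not preserve $\Fil^0$, but only its composite landing in $\Dcrys(V)$ is needed here. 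The remaining ingredients --- the topological exactness underlying the long exact sequence and the bookkeeping of functoriality --- are routine.
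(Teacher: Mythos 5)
Your proof is correct and follows essentially the same route as the paper: tensor the fundamental exact sequence \eqref{eq:fes_bcris} with $V$, take the long exact sequence in continuous $G_F$-cohomology, identify the invariants of the middle terms with $\Fil^0\Dcrys(V)$ and $\Dcrys(V)$, and conclude via Proposition \ref{prop:hif_kerfil0}. Your extra care in justifying the identification of the $\Fil^0$ term is a welcome elaboration of a step the paper leaves implicit, but it does not change the argument.
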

\begin{proof}
	Tensoring the fundamental exact sequence in \eqref{eq:fes_bcris} with $V$, we obtain a $G_F\equivariant$ exact sequence
	\begin{equation*}
		0 \longrightarrow V \longrightarrow \Fil^0\Bcrys(\OFbar) \otimes_{\QQ_p} V \xrightarrow{\hspace{1mm} 1-\varphi \hspace{1mm}} \Bcrys(\OFbar) \otimes_{\QQ_p} V \longrightarrow 0.
	\end{equation*}
	By computing the continuous Galois cohomoloy, we obtain the following long exact sequence:
	\begin{equation*}
		\begin{aligned}
			0 \longrightarrow H^0(G_F, V) \longrightarrow \Fil^0 \Dcrys(V) &\xrightarrow{\hspace{1mm} 1-\varphi \hspace{1mm}} \Dcrys(V) \longrightarrow H^1(G_F, V) \longrightarrow \\
			&\longrightarrow H^1(G_F, \Fil^0 \Bcrys(\OFbar) \otimes_{\QQ_p} V).
		\end{aligned}
	\end{equation*}
	The claim now follows from Proposition \ref{prop:hif_kerfil0}.
\end{proof}

\section{Wach modules}\label{sec:wachmods}

In this section, we will recall the definition of Wach modules, their relationship with $\padic$ crystalline representations, and prove some results on the Nygaard filtration on Wach modules.
From Section \ref{subsec:period_rings} recall that we have the ring $\AF^+ = O_F\llbracket \mu \rrbracket$ equipped with a Frobenius endomorphism $\varphi$ and a continuous action of $\Gamma_F$.
Moreover, we fixed $[p]_q = \varphi(\mu)/\mu$ in $\AF^+$.

\begin{defi}\label{defi:wachmod_af+}
	Let $a, b \in \ZZ$ with $b \geqslant a$.
	A \textit{Wach module} over $\AF^+$ with weights in the interval $[a, b]$ is a finite free $\AF^+\module$ $N$ equipped with a continuous and semilinear action of $\Gamma_F$ satisfying the following:
	\begin{enumarabicup}
	\item $\Gamma_F$ acts trivially on $N/\mu N$.

	\item There is a Frobenius-semilinear operator $\varphi: N[1/\mu] \rightarrow N[1/\varphi(\mu)]$ commuting with the action of $\Gamma_F$ such that $\varphi(\mu^b N) \subset \mu^b N$, and the cokernel of the induced injective map $(1 \otimes \varphi) \colon \varphi^{\ast}(\mu^b N) \rightarrow \mu^b N$ is killed by $[p]_q^{b-a}$.
	\end{enumarabicup}
	Say that $N$ is \textit{effective} if one may take $b = 0$ and $a \leqslant 0$.
	Denote the category of Wach modules over $\AF^+$ as $(\varphi, \Gamma)\Mod_{\AF^+}^{[p]_q}$ with morphisms between objects being $\AF^+\linear$, $\Gamma_F\equivariant$ and $\varphi\equivariant$ (after inverting $\mu$) morphisms.
\end{defi}

\begin{rem}\label{rem:finiteheight_equiv}
	Let $N$ be a finitely generated $\AF^+\module$.
	Then, from \cite[Lemma 3.10]{abhinandan-relative-wach-ii} we have that the condition (2) of Definition \ref{defi:wachmod_af+} is equivalent to giving an $\AF^+\linear$ and $\Gamma_F\equivariant$ isomorphism $\varphi_N \colon (\varphi^*N)[1/[p]_q] = (\AF^+ \otimes_{\varphi, \AF^+} N)[1/[p]_q] \isomorphic N[1/[p]_q]$.
\end{rem}

\begin{rem}\label{rem:wachtoetale_af+}
	Extending scalars along $\AF^+ \rightarrow \AF$ induces a fully faithful functor $(\varphi, \Gamma_F)\Mod_{\AF^+}^{[p]_q} \rightarrow (\varphi, \Gamma_F)\Mod_{\AF}^{\etale}$ (see \cite[Proposition 3.3]{abhinandan-imperfect-wach}).
	Moreover, the preceding functor preserves short exact sequences because the map $\AF^+ \rightarrow \AF$ is flat.
\end{rem}

\begin{nota}
	We shall say that a $(\varphi, \Gamma_F)\equivariant$ sequence of Wach modules over $\AF^+$,
	\begin{equation*}
		0 \longrightarrow N_1 \longrightarrow N_2 \longrightarrow N_3 \longrightarrow 0,
	\end{equation*}
	is \textit{exact} if it is exact as a sequence of modules over $\AF^+$.
\end{nota}

\subsection{Descent of Wach modules}\label{eq:wachmod_descent}

In this section, we shall show that Wach modules over $\AF^+$ descend to a certain subring of $\AF^+$ (see \cite{fontaine-phigamma, wach-torsion}).
From Section \ref{subsec:period_rings} recall that we have the ring $S = O_F\llbracket \mu_0 \rrbracket$, equipped with a Frobenius endomorphism $\varphi$ and a continuous action of $\Gamma_0$.
Moreover, have that $\mu_0 = \sum_{a \in \FF_p^{\times}} ((1+\mu)^{[a]}-1)$, and we fixed $\ptilde = \mu_0 + p$ in $S$.

\begin{defi}\label{defi:wachmod_s}
	Let $a, b \in \ZZ$ with $b \geqslant a$.
	A \textit{Wach module} over $S$ with weights in the interval $[a, b]$ is a finite free $S\module$ $M$ equipped with a continuous and semilinear action of $\Gamma_0$ satisfying the following:
	\begin{enumarabicup}
	\item $\Gamma_0$ acts trivially on $M/\mu_0 M$.

	\item There is a Frobenius-semilinear operator $\varphi: M[1/\mu_0] \rightarrow M[1/\varphi(\mu_0)]$ commuting with the action of $\Gamma_0$ such that $\varphi(\mu_0^b M) \subset \mu_0^b M$, and the cokernel of the induced injective map $(1 \otimes \varphi) \colon \varphi^{\ast}(\mu_0^b M) \rightarrow \mu_0^b M$ is killed by $\ptilde^{b-a}$.
	\end{enumarabicup}
	Say that $M$ is \textit{effective} if one may take $b = 0$ and $a \leqslant 0$.
	Denote the category of Wach modules over $S$ as $(\varphi, \Gamma)\Mod_{S}^{\ptilde}$ with morphisms between objects being $S\linear$, $\Gamma_0\equivariant$ and $\varphi\equivariant$ (after inverting $\mu_0$) morphisms.
\end{defi}

\begin{nota}
	We shall say that a $(\varphi, \Gamma_0)\equivariant$ sequence of Wach modules over $S$,
	\begin{equation*}
		0 \longrightarrow M_1 \longrightarrow M_2 \longrightarrow M_3 \longrightarrow 0,
	\end{equation*}
	is \textit{exact} if it is exact as a sequence of modules over $S$.
\end{nota}

\begin{lem}\label{lem:finiteheight_equiv}
	Let $M$ be a finite free $S\module$ satisfying condition (1) of Definition \ref{defi:wachmod_s}.
	Then, condition (2) of Definition \ref{defi:wachmod_s} is equivalent to giving an $S\linear$ and $\Gamma_0\equivariant$ isomorphism $\varphi_M \colon (\varphi^*M)[1/\ptilde] = (S \otimes_{\varphi, S} M)[1/\ptilde] \isomorphic M[1/\ptilde]$.
\end{lem}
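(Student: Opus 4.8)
The statement to prove (Lemma \ref{lem:finiteheight_equiv}) is the exact analogue over $S$ of Remark \ref{rem:finiteheight_equiv}, which records the corresponding equivalence over $\AF^+$ (citing \cite[Lemma 3.10]{abhinandan-relative-wach-ii}). The plan is to reduce the statement over $S$ to the statement over $\AF^+$ using the faithfully flat, finite (degree $p-1$) map $S \rightarrow \AF^+$ recalled in Subsection \ref{subsec:period_rings}, together with the facts that $\ptilde$ becomes (up to a unit in $\AF$) equal to $[p]_q$ and that $\mu_0 \AF^+ = \mu^{p-1} \AF^+$ up to a unit.

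First I would establish the easy direction: if $\varphi \colon M[1/\mu_0] \to M[1/\varphi(\mu_0)]$ satisfies condition (2), then since the cokernel of $(1 \otimes \varphi)\colon \varphi^*(\mu_0^b M) \to \mu_0^b M$ is killed by $\ptilde^{b-a}$, the map becomes an isomorphism after inverting $\ptilde$; untwisting by $\mu_0^b$ (which is a unit once $\mu_0$ is invertible, hence once $\ptilde$ is invertible because $\mu_0$ divides $\ptilde^{\,k}$ up to a unit for suitable $k$ — more precisely $\varphi(\mu_0)=u\mu_0\ptilde^{p-1}$ so inverting $\ptilde$ forces control of $\mu_0$ after one Frobenius; one needs to check $\mu_0$ itself is invertible in $S[1/\ptilde]$, which holds since $\ptilde = \mu_0 + p$ and $p$ is already handled, or simply because $\mu_0$ is $\mu^{p-1}$ times a unit and $\mu \mid \mu_0 \mid [p]_q$-power relations) gives the desired $S[1/\ptilde]$-linear, $\Gamma_0$-equivariant isomorphism $\varphi_M$. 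Here the main point to verify carefully is that $\ptilde$ and $\mu_0$ generate the same non-unit locus in $S[1/p]$, so that $M[1/\mu_0][1/\ptilde] = M[1/\ptilde]$; this is where I expect to spend the most care, and it is the genuine content beyond formalities.

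For the converse, suppose given an $S$-linear, $\Gamma_0$-equivariant isomorphism $\varphi_M \colon (\varphi^*M)[1/\ptilde] \isomorphic M[1/\ptilde]$. Restricting to $\varphi^*M \subset (\varphi^*M)[1/\ptilde]$ and intersecting with $M$, the image $\varphi_M(\varphi^*M)$ is an $S$-lattice in $M[1/\ptilde]$ commensurable with $M$; I would define $\varphi$ on $M[1/\mu_0]$ by the appropriate twist and check that $\varphi(\mu_0^b M) \subset \mu_0^b M$ and that the cokernel is $\ptilde^{b-a}$-torsion. To verify the cokernel bound, I would pass to $\AF^+$: by faithful flatness of $S \to \AF^+$, the cokernel of $(1\otimes\varphi)\colon \varphi^*(\mu_0^b M)\to \mu_0^b M$ is killed by $\ptilde^{b-a}$ if and only if its base change along $\AF^+$ is killed by $\ptilde^{b-a}$; but $N := \AF^+\otimes_S M$ with its induced $\varphi_N = \AF^+ \otimes \varphi_M$ satisfies the hypothesis of Remark \ref{rem:finiteheight_equiv} over $\AF^+$ (the isomorphism $\varphi_N\colon (\varphi^*N)[1/[p]_q]\isomorphic N[1/[p]_q]$ is obtained by base change, using that $\ptilde$ and $[p]_q$ differ by a unit in $\AF$ so $N[1/\ptilde]=N[1/[p]_q]$), hence $(1\otimes\varphi)\colon \varphi^*(\mu^b N)\to \mu^b N$ has cokernel killed by $[p]_q^{b-a}$. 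Then I translate back: since $\mu_0 \AF^+ = \mu^{p-1}\AF^+$ up to a unit and $\ptilde = [p]_q$ up to a unit in $\AF$, with a little bookkeeping of exponents (matching $\mu_0^b$ with $\mu^{(p-1)b}$ and $\ptilde^{b-a}$ with $[p]_q^{b-a}$) the cokernel bound descends. The main obstacle is precisely this exponent-bookkeeping and making sure the units involved lie in the right subrings (e.g.\ $\ptilde$ is a unit multiple of $[p]_q$ only in $\AF$, not in $\AF^+$, so one must argue the cokernel statement after a controlled localization and then descend); once the dictionary between $(\mu_0, \ptilde)$ over $S$ and $(\mu, [p]_q)$ over $\AF^+$ is set up cleanly, everything else is a routine diagram chase using faithful flatness.
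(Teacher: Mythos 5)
There is a genuine gap in the forward direction of your argument, at precisely the point you single out as ``the genuine content beyond formalities''. You want to untwist by $\mu_0^b$ by arguing that $\mu_0$ becomes invertible in $S[1/\ptilde]$, so that $M[1/\mu_0][1/\ptilde] = M[1/\ptilde]$. This is false: the reduction $S \rightarrow S/\mu_0 S = O_F$ sends $\ptilde = \mu_0 + p$ to $p$, hence extends to a nonzero ring homomorphism $S[1/\ptilde] \rightarrow F$ killing $\mu_0$, so $\mu_0$ is not a unit in $S[1/\ptilde]$. (Concretely, $\ptilde$ vanishes at $\mu = \zeta_{p}^i - 1$ for $i \neq 0$, while $\mu_0$, being $\mu^{p-1}$ times a unit of $\AF^+$, vanishes only at $\mu = 0$; these loci are disjoint.) Your fallback justifications also fail: $p$ is not inverted anywhere in the statement, and $\mu_0$ divides no power of $[p]_q$ since $\mu \mid \mu_0$ while $[p]_q \equiv p \bmod \mu$. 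The repair is to avoid inverting $\mu_0$ altogether: because $M$ may be taken $\mu_0\torsion$ free, multiplication by $\mu_0^b$ is a \emph{module} isomorphism $M[1/\ptilde] \rightarrow (\mu_0^b M)[1/\ptilde]$ and likewise $(\varphi^\ast M)[1/\ptilde] \rightarrow (\mu_0^b \varphi^\ast M)[1/\ptilde]$, where one identifies $\varphi^\ast(\mu_0^b M)$ with $\mu_0^b \varphi^\ast M$ after inverting $\ptilde$ using $\varphi(\mu_0) = u \mu_0 \ptilde^{p-1}$; the paper defines $\varphi_M$ as the conjugate of $1 \otimes \varphi$ by these isomorphisms. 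As written, your route does not reach the conclusion.

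For the converse you propose base change to $\AF^+$, an appeal to Remark \ref{rem:finiteheight_equiv}, and faithfully flat descent, whereas the paper argues directly over $S$: it chooses $a, b$ with $\ptilde^b \varphi_M(\varphi^\ast M) \subset M \subset \ptilde^a \varphi_M(\varphi^\ast M)$ and verifies the cokernel bound by hand using $\varphi(\mu_0) = u\mu_0\ptilde^{p-1}$. Your route is viable in principle, but the step you defer as ``a little bookkeeping'' is where the content lies: the base change of $1\otimes\varphi \colon \varphi^\ast(\mu_0^b M) \rightarrow \mu_0^b M$ is, up to units, $\varphi^\ast(\mu^{(p-1)b} N) \rightarrow \mu^{(p-1)b} N$, which is \emph{not} the map $\varphi^\ast(\mu^b N) \rightarrow \mu^b N$ occurring in Definition \ref{defi:wachmod_af+}, so the interval $[a,b]$ does not transport literally and the kill-exponent must be recomputed — at which point one is essentially performing the paper's direct computation. (On your parenthetical worry: $\ptilde/[p]_q$ is in fact a unit of $\AF^+$, not merely of $\AF$ — both elements are, by Weierstrass preparation, a unit times the same distinguished polynomial of degree $p-1$ — and the paper relies on this in Proposition \ref{prop:wachmod_fpx_descent} and before Theorem \ref{thm:wachmod_s_af+_equiv}, so that particular obstacle is not real.)
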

\begin{proof}
	Suppose that $M$ satisfies condition (2) of Definition \ref{defi:wachmod_s}.
	Then, the map $1 \otimes \varphi \colon \varphi^*(\mu_0^b M) \rightarrow \mu_0^b M$ induces an isomorphism $1 \otimes \varphi \colon (\mu_0^b \varphi^*M)[1/\ptilde] \isomorphic (\mu_0^b M)[1/\ptilde]$.
	Hence, we obtain an isomorphism
	\begin{equation*}
		\varphi_M \colon (\varphi^*M)[1/\ptilde] \xrightarrow[\sim]{\hspace{1mm} \mu_0^b \hspace{1mm}} (\mu_0^b \varphi^*M)[1/\ptilde] \xrightarrow[\sim]{\hspace{1mm} 1 \otimes \varphi \hspace{1mm}} (\mu_0^b M)[1/\ptilde] \xleftarrow[\sim]{\hspace{1mm} \mu_0^b \hspace{1mm}} M[1/\ptilde].
	\end{equation*}
	Since, $1 \otimes \varphi$ commutes with the action of $\Gamma_0$, we deduce that $\varphi_M$ is $\Gamma_0\equivariant$.

	Conversely, let us suppose that $M$ is equipped with an $S\linear$ and $\Gamma_0\equivariant$ isomorphism $\varphi_M \colon (\varphi^*M)[1/\ptilde] \isomorphic M[1/\ptilde]$.
	Then, note that for some $a, b \in \ZZ$ with $b \geqslant a$ we may write $\ptilde^b \varphi_M(\varphi^* M) \subset M \subset \ptilde^a \varphi_M(\varphi^*M)$.
	So we get an $S\textrm{-semilinear}$ and $\Gamma_0\equivariant$ map as the composition $\varphi \colon \mu_0^b M \xrightarrow{\hspace{1mm} \textrm{can} \hspace{1mm}} \varphi^*(\mu_0^b M) \xrightarrow{\hspace{1mm} \varphi_M \hspace{1mm}} \mu_0^b M$.
	This extends to an $S\textrm{-semilinear}$ and $\Gamma_0\equivariant$ map $\varphi \colon M[1/\mu_0] \rightarrow M[1/\varphi(\mu_0)]$, and we have that
	\begin{equation*}
		\varphi_M(\varphi^*(\mu_0^b M)) = \mu_0^b \ptilde^{(p-1)b} \varphi_M(\varphi^*M) \subset \mu_0^b \ptilde^{(p-1)(b-1)} M \subset \ptilde^{a-b} \varphi_M(\varphi^*(\mu_0^b M)),
	\end{equation*}
	where we used that $\varphi(\mu_0) = u \mu_0\ptilde^{p-1}$, for some unit $u$ in $S$ (see the discussion after \eqref{eq:pq_mu0_ptilde_defi}).
	Then, it follows that $1 \otimes \varphi = \varphi_M \colon \varphi^*(\mu_0^b M) \rightarrow \mu_0^b M$ is injective, its cokernel is killed by $\ptilde^{b-a}$, and it commutes with the action of $\Gamma_0$.
	Hence, $M$ satisfies condition (2) of Definition \ref{defi:wachmod_s}.
\end{proof}

Next, let us compare the two notions of Wach modules over $S$ and $\AF^+$, respectively.
We start with the following observation:
\begin{prop}\label{prop:wachmod_fpx_descent}
	Let $N$ be a Wach module over $\AF^+$.
	Then $M := N^{\FF_p^{\times}}$ is a Wach module over $S$, and the $\AF^+\textrm{-linear}$ extension of the $S\linear$ inclusion $M \subset N$, induces a natural isomorphism 
	\begin{equation}\label{eq:af+m_is_n}
		\AF^+ \otimes_S M \isomorphic N,
	\end{equation}
	of Wach modules over $\AF^+$.
	Moreover, the isomorphism in \eqref{eq:af+m_is_n} induces a natural isomorphism of $O_F\modules$ $M/\mu_0 M \isomorphic N/\mu N$ compatible with the respective Frobenii.
\end{prop}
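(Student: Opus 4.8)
The plan is to exploit the $\FF_p^\times$-decomposition of $N$ from Remark \ref{rem:fpx_decomp} together with the structural facts about $S \subset \AF^+$ recalled in Subsection \ref{subsec:period_rings}. Since $N$ is a compact $\ZZ_p$-module with continuous $\Gamma_F$-action (it is finite free over $\AF^+$, hence $p$-adically and $\mu$-adically complete), it decomposes as $N = \bigoplus_{i=0}^{p-1} N_i$ with $N_0 = N^{\FF_p^\times} = M$. The first step is to show that this decomposition is $\AF^+$-semilinear in the appropriate sense: $\AF^+ = \bigoplus_{i} (\AF^+)_i$ as an $S$-module (indeed $S = (\AF^+)^{\FF_p^\times} = (\AF^+)_0$, and $(\AF^+)_i = \mu_0$-adic pieces spanned by suitable $(1+\mu)^{[a]}$-combinations), and the $\Gamma_F$-action being semilinear forces $(\AF^+)_i \cdot N_j \subseteq N_{i+j}$. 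Combined with the fact (loc.\ cit.) that $S \to \AF^+$ is faithfully flat and finite of degree $p-1$, and that $\mu_0 S = (\mu\AF^+)^{\FF_p^\times}$, a standard descent/projector argument shows the natural map $\AF^+ \otimes_S M \to N$ is an isomorphism of $\AF^+$-modules; in particular $M$ is finite free over $S$ of the same rank, since $\AF^+$ is faithfully flat over $S$ and $\AF^+ \otimes_S M \cong N$ is finite free.

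Next I would check that $M = N^{\FF_p^\times}$, with its induced $\Gamma_0$-action (Remark \ref{rem:fpx_decomp}) and the Frobenius obtained by restriction, satisfies Definition \ref{defi:wachmod_s}. Condition (1): $\Gamma_0$ acts trivially on $M/\mu_0 M$. Here one uses that $\mu_0$ is $\mu^{p-1}$ times a unit in $\AF^+$ and that $\Gamma_F$ acts trivially on $N/\mu N$; taking $\FF_p^\times$-invariants of the filtration $N \supset \mu N \supset \cdots \supset \mu^{p-1} N$ and using $\mu_0 S = (\mu\AF^+)^{\FF_p^\times}$ together with $\mu_0 N = \mu^{p-1} N \cdot (\text{unit})$ one gets $M/\mu_0 M \hookrightarrow N/\mu^{p-1}N$, on which $\Gamma_0$ acts unipotently; a short argument (or direct use of the known structure) upgrades this to trivial action modulo $\mu_0 M$. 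Condition (2): the Frobenius on $N$ restricts to $M$ because $\varphi$ commutes with the $\FF_p^\times$-action (it is $O_F$-linear on the Teichmüller lifts $[a]$), so $\varphi(M) \subseteq N^{\FF_p^\times}[1/\mu_0]$; then using the isomorphism $\AF^+ \otimes_S M \cong N$ and $\varphi(\mu_0) = u\mu_0\ptilde^{p-1}$, the isogeny condition for $N$ with $[p]_q$ (equivalently $\ptilde$, which differs by a unit in $\AF$) descends to the isogeny condition for $M$ with $\ptilde$. It is cleaner to phrase this via Remark \ref{rem:finiteheight_equiv} and Lemma \ref{lem:finiteheight_equiv}: $N$ Wach means $\varphi_N : (\varphi^*N)[1/[p]_q] \isomorphic N[1/[p]_q]$ is $\AF^+$-linear and $\Gamma_F$-equivariant, and since $\varphi^*N = \AF^+ \otimes_S \varphi^*M$ compatibly, taking $\FF_p^\times$-invariants and inverting $\ptilde$ yields $\varphi_M : (\varphi^*M)[1/\ptilde] \isomorphic M[1/\ptilde]$, $S$-linear and $\Gamma_0$-equivariant.

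Finally, for the last sentence: applying $- \otimes_{\AF^+} \AF^+/\mu\AF^+ = - \otimes_{\AF^+} O_F$ to the isomorphism $\AF^+ \otimes_S M \isomorphic N$ gives $N/\mu N \cong O_F \otimes_S M = M/(\mu_0 \text{ image in } O_F)M$; since $\mu_0 \equiv 0$ in $S/\mu S \cap S$, more precisely the composite $S \hookrightarrow \AF^+ \twoheadrightarrow O_F$ has kernel $\mu_0 S$ (as $S/\mu_0 S = O_F$ and this surjects compatibly onto $\AF^+/\mu\AF^+ = O_F$), we get $O_F \otimes_S M = M/\mu_0 M$, hence the natural isomorphism $M/\mu_0 M \isomorphic N/\mu N$. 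Frobenius-compatibility is immediate since the isomorphism $\AF^+ \otimes_S M \isomorphic N$ is Frobenius-equivariant after inverting $\mu_0$ and both sides mod $\mu$ (resp.\ $\mu_0$) acquire honest Frobenii because $[p]_q \equiv p$, $\ptilde \equiv p$ there.

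The main obstacle I anticipate is verifying condition (1) for $M$ carefully: passing from "$\Gamma_F$ acts trivially on $N/\mu N$" to "$\Gamma_0$ acts trivially on $M/\mu_0 M = M/\mu^{p-1}M$'' is not formal, since $\mu_0$ generates a smaller ideal than $\mu$; one must genuinely use the $\FF_p^\times$-grading (so that $M \cap \mu^j N$ relates to $\mu_0^{\lceil j/(p-1)\rceil}$-multiples in $M$) and the explicit congruence $(g-1)\mu_0 \in \ptilde\mu_0 S$ for $g \in \Gamma_0$, rather than a soft argument. The descent isomorphism $\AF^+ \otimes_S M \cong N$ itself should be routine given faithful flatness and the grading, as should the identification $M/\mu_0 M \cong N/\mu N$.
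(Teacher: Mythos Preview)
Your overall strategy is sound and close to the paper's, but the difficulty is misplaced. The step you flag as the ``main obstacle'' --- showing that $\Gamma_0$ acts trivially on $M/\mu_0 M$ --- is in fact an immediate consequence of the isomorphism $M/\mu_0 M \cong N/\mu N$ that you yourself derive in your final paragraph by applying $O_F \otimes_{\AF^+}(-)$ to $\AF^+ \otimes_S M \cong N$ (the kernel of $S \hookrightarrow \AF^+ \twoheadrightarrow O_F$ is indeed $\mu_0 S$, since $\mu_0 \mapsto -p + p = 0$). Once that identification is in hand, triviality of the $\Gamma_0$-action on $M/\mu_0 M$ is inherited from triviality of the $\Gamma_F$-action on $N/\mu N$, and your proposed ``unipotent-to-trivial'' upgrade on $N/\mu^{p-1}N$ is unnecessary. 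You should simply reorder: establish $\AF^+ \otimes_S M \cong N$ first, deduce $M/\mu_0 M \cong N/\mu N$ by base change, and read off condition~(1).

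The genuine soft spot is rather your first step. Calling the isomorphism $\AF^+ \otimes_S M \to N$ a ``standard descent/projector argument'' glosses over what the paper treats most carefully. Abstract Galois descent would require knowing that $S \to \AF^+$ is an $\FF_p^{\times}$-torsor, which is not established in the text; the grading compatibility $(\AF^+)_i \cdot N_j \subset N_{i+j}$ alone does not force $(\AF^+)_i \otimes_S N_0 \to N_i$ to be bijective for $i \neq 0$. The paper instead argues directly: for \emph{surjectivity}, since $\Gamma_F$ (hence $\FF_p^{\times}$) acts trivially on $N/\mu N$, the decomposition of Remark~\ref{rem:fpx_decomp} gives $(N/\mu N)_i = 0$ for $i \neq 0$, so the composite $M = N_0 \to N \twoheadrightarrow N/\mu N$ is already surjective and Nakayama finishes. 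For \emph{injectivity}, the paper passes to $N[1/\ptilde]$, shows $M[1/\ptilde]$ is finite free over the principal ideal domain $S[1/\ptilde]$, and uses flatness of $S \to \AF^+$. The paper then computes $(\mu N)_0 = \mu_0 M$ directly from $N_j \cong (\AF^+)_j \otimes_S M$ and $(\mu \AF^+)_0 = \mu_0 S$, recovering $M/\mu_0 M \cong N/\mu N$; your tensor-product shortcut gives the same conclusion more quickly. Your treatment of the Frobenius condition via Remark~\ref{rem:finiteheight_equiv} and Lemma~\ref{lem:finiteheight_equiv} matches the paper exactly.
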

\begin{proof}
	The claim follows from \cite[Theorem 1.5]{abhinandan-prismatic-wach}.
	However, we provide another self-contained proof.
	Let $A := \AF^+$, and from Remark \ref{rem:fpx_decomp} note that for the $A\module$ $N$ there exists a natural $\FF_p^{\times}\textrm{-decomposition}$ $N = \bigoplus_{i=0}^{p-2} N_i$, where $M = N^{\FF_p^{\times}} = N_0$, and each $N_i$ is a $(p, \mu_0)\textrm{-adically}$ complete $S\module$ equipped with a continuous and semilinear action of $\Gamma_0$.
	Moreover, recall that $A$ is flat and finite of degree $p-1$ over the noetherian ring $S$, so it follows that $N$ is finite free over $S$, and the $S\textrm{-submodule}$ $M \subset N$ is finitely generated.
	Now, let us consider the following natural commutative diagram:
	\begin{equation}\label{eq:af+m_in_n}
		\begin{tikzcd}
			A \otimes_S M \arrow[r] \arrow[d] & N \arrow[d]\\
			A \otimes_S M[1/\ptilde] \arrow[r] & N[1/\ptilde],
		\end{tikzcd}
	\end{equation}
	where the right vertical arrow is the natural inclusion, the left vertical arrow is injective because $M$ is $\ptilde\textrm{-torsion free}$ (since the same is true for $N$) and the natural map $S \rightarrow A$ is flat.

	We claim that the top horizontal arrow of \eqref{eq:af+m_in_n} is bijective.
	First, note that to show the injectivity of the top horizontal arrow, it is enough to show that the injectivity of the bottom horizontal arrow in \eqref{eq:af+m_in_n}.
	The module $N[1/\ptilde]$ is finite free over $A[1/\ptilde]$, and the map $S \rightarrow A$ is faithfully flat and finite of degree $p-1$, so we see that $N[1/\ptilde]$ is finite free over $S[1/\ptilde]$ as well.
	As $N[1/\ptilde]$ is equipped with an induced action of $\Gamma_F$, therefore, from the decomposition in Remark \ref{rem:fpx_decomp}, it follows that $M[1/\ptilde] \isomorphic (N[1/\ptilde])_0$ as $S[1/\ptilde]\modules$, in particular, $M[1/\ptilde]$ is finite projective over $S[1/\ptilde]$, hence finite free because $S[1/\ptilde]$ is a principal ideal domain (see Lemma \ref{lem:sptildeinv_pid}).
	As the natural map $S \rightarrow A$ is flat, it follows that the bottom horizontal arrow in \eqref{eq:af+m_in_n} is injective.

	Next, let us show that the top horizontal arrow in \eqref{eq:af+m_in_n} is surjective.
	Note that we have an $O_F\linear$ and $\Gamma_F\equivariant$ surjection $N \rightarrow N/\mu N$, and using the decomposition in Remark \ref{rem:fpx_decomp}, it may be rewritten as a $\Gamma_F\equivariant$ surjection $\bigoplus_{i=0}^{p-2} N_i \rightarrow \bigoplus_{i=0}^{p-2} (N/\mu N)_i$.
	In particular, the latter map is surjective on each term, i.e.\ the induced $O_F\linear$ map $N_i \rightarrow (N/\mu N)_i$ is surjective, for each $0 \leqslant i \leqslant p-2$.
	However, recall that $\Gamma_F$ acts trivially on $N/\mu N$, therefore, we see that $N/\mu N = \bigoplus_{i=0}^{p-2} (N/\mu N)_i = (N/\mu N)_0$.
	In particular, the natural $O_F\linear$ map $M \rightarrow N/\mu N$ is surjective.
	Since $\mu$ belongs to the Jacobson radical of $A$, therefore, Nakayama Lemma implies that the natural $(\varphi, \Gamma_F)\equivariant$ map $A \otimes_S M \rightarrow N$ is also surjective.
	Hence, it follows that the $A\linear$ extension of the $S\linear$ inclusion $M \subset N$ induces the natural $(\varphi, \Gamma_F)\equivariant$ isomorphism in \eqref{eq:af+m_is_n}, i.e.\ $A \otimes_S M \isomorphic N$.

	Let us now compute the kernel of the surjective map $M \rightarrow N/\mu N$, which is given as $\mu N \cap M$ inside $N$.
	Using the decomposition in Remark \ref{rem:fpx_decomp}, we get that $\mu N \cap M = (\mu M)_0$, and we claim that $\mu_0 M \isomorphic (\mu N)_0$ as $S\modules$.
	As the natural map $\mu_0 M \rightarrow (\mu N)_0$ is injective, we need to show that it is surjective as well.
	Note that we may write
	\begin{equation*}
		(\mu N)_0 = \textstyle\sum_{i+j = 0 \textrm{ mod } p-1} (\mu A)_i \otimes_S N_j.
	\end{equation*}
	Moreover, using the $(\varphi, \Gamma_F)\equivariant$ isomorphism $A \otimes_S M \isomorphic N$, note that we have $A_j \otimes_S M \isomorphic N_j$, for each $0 \leqslant j \leqslant p-2$.
	Therefore, for each pair $i, j$ such that $i+j = 0 \textrm{ mod } p-1$, we see that 
	\begin{equation*}
		(\mu A)_i \otimes_S N_j \lisomorphic (\mu A)_i \otimes_S A_j \otimes_S N_0 \subset (\mu A)_0 \otimes_S N_0.
	\end{equation*}
	Now, recall that we have, $\mu_0 S = (\mu A)^{\FF_p^{\times}} = (\mu A)_0$.
	Hence, from the preceding discussion, we get that $\mu_0 M \isomorphic (\mu N)_0$.
	Subsequently, we have also obtained that $M/\mu_0 M \isomorphic N/\mu N$ as $O_F\modules$, and therefore, the action of $\Gamma_0$ is trivial on $M/\mu_0 M$.
	Moreover, note that the $S\module$ $M$ is $p\torsion$ free and $\mu_0\torsion$ free, and $M/\mu_0 M \isomorphic N/\mu N$ is $p\torsion$ free.
	Therefore, from \cite[Lemma 3.5]{abhinandan-relative-wach-ii} and \cite[Proposition B.1.2.4]{fontaine-phigamma}, it follows that $M$ is finite free over $A$.

	Finally, let us show the Frobenius condition on $M$ (see Definition \ref{defi:wachmod_s} and Lemma \ref{lem:finiteheight_equiv}).
	From Section \ref{subsec:period_rings}, recall that $\ptilde$ is the product of $[p]_q$ with a unit in $A$, and so the Frobenius structure on $N$ may also be described as an $A\linear$ isomorphism $\varphi^*(N)[1/\ptilde] \isomorphic N[1/\ptilde]$ (see Definition \ref{defi:wachmod_af+} and Remark \ref{rem:finiteheight_equiv}).
	Moreover, the Frobenius on $N$ commutes with the action of $\Gamma_F$.
	Therefore, by taking invariants under the action of $\FF_p^{\times}$, of the isomorphism $\varphi^*(N)[1/\ptilde] \isomorphic N[1/\ptilde]$, and using the $(\varphi, \Gamma_F)\equivariant$ isomorphism in \eqref{eq:af+m_is_n}, we conclude that $M$ is equipped with an $S\linear$ isomorphism $\varphi^*(M)[1/\ptilde] \isomorphic M[1/\ptilde]$, compatible with the natural action of $\Gamma_0$.
	This allows us to conclude.
\end{proof}

Now, let $M$ be a Wach module over $S$, and let $N := \AF^+ \otimes_S M$ be a finite free module over $\AF^+$.
Then using the natural $(\varphi, \Gamma_0)\action$ on $M$, we see that $N$ is naturally equipped with a semilinear and continuous action of $\Gamma_F$, and an $\AF^+\linear$ isomorphism $\varphi_N \colon (\varphi^*N)[1/\ptilde] \isomorphic N[1/\ptilde]$.
As $\ptilde$ is the product of $[p]_q$ with a unit in $\AF^+$ (see Section \ref{subsec:period_rings}), therefore, using Remark \ref{rem:finiteheight_equiv} we conclude that $N$ is a Wach module over $\AF^+$.
More generally, we have that,
\begin{thm}\label{thm:wachmod_s_af+_equiv}
	The following natural functor induces an equivalence of $\otimes\textrm{-categories}$
	\begin{equation}\label{eq:wachmod_s_af+_equiv}
		\begin{aligned}
			(\varphi, \Gamma_0)\Mod_{S}^{\ptilde} &\isomorphic (\varphi, \Gamma_F)\Mod_{\AF^+}^{[p]_q}\\
			M &\longmapsto \AF^+ \otimes_S M,
		\end{aligned}
	\end{equation}
	with a $\otimes\textrm{-compatible}$ quasi-inverse functor given as $N \mapsto N^{\FF_p^{\times}}$.
	Moreover, the functor in \eqref{eq:wachmod_s_af+_equiv} and its quasi-inverse preserve short exact sequences.
\end{thm}
In the statement above, short exact sequences should be interpreted in the sense of the notation introduced after Remark \ref{rem:wachtoetale_af+} and Definition \ref{defi:wachmod_s}.
\begin{proof}
	Note that the natural $(\varphi, \Gamma_F)\equivariant$ map $S \rightarrow \AF^+$ is faithfully flat and finite of degree $p-1$, so it follows the functor in \eqref{eq:wachmod_s_af+_equiv} fully faithful and preserves short exact sequences. 
	Moreover, by Proposition \ref{prop:wachmod_fpx_descent} we see that \eqref{eq:wachmod_s_af+_equiv} is essentially surjective, and its compatibility with tensor products is obvious.
	It remains to show that the quasi-inverse functor is compatible with $\otimes\textrm{-products}$ and preserves short exact sequences.
	To check the compatibility with tensor products, let $N_1$ and $N_2$ be two Wach modules over $\AF^+$, set $M_i := N_i^{\FF_p^{\times}}$, for each $i = 1, 2$, and using Proposition \ref{prop:wachmod_fpx_descent}, note that we have 
	\begin{equation*}
		(N_1 \otimes_{\AF^+} N_2)^{\FF_p^{\times}} = ((\AF^+ \otimes_S M_1) \otimes_{\AF^+} (\AF^+ \otimes_S M_2))^{\FF_p^{\times}} = M_1 \otimes_S M_2.
	\end{equation*}
	Next, consider an $\AF^+\linear$ and $(\varphi, \Gamma_F)\equivariant$ exact sequence of Wach modules over $\AF^+$ as
	\begin{equation*}
		0 \longrightarrow N_1 \longrightarrow N_2 \longrightarrow N_3 \longrightarrow 0.
	\end{equation*}
	Setting $M_i := N_i^{\FF_p^{\times}}$, for each $i = 1, 2, 3$, and considering the associated long exact exact sequence for the cohomology of the $\FF_p^{\times}\textrm{-action}$, and noting that $H^1(\FF_p^{\times}, N_1) = 0$, since $p-1$ is invertible in $\ZZ_p$, we obtain an $S\linear$ and $(\varphi, \Gamma_F)\equivariant$ exact sequence of Wach modules over $S$ as
	\begin{equation*}
		0 \longrightarrow M_1 \longrightarrow M_2 \longrightarrow M_3 \longrightarrow 0.
	\end{equation*}
	This allows us to conclude.
\end{proof}

\begin{rem}\label{rem:wachtoetale_s}
	Note that extension of scalars along $S \rightarrow A_{F, 0}$ induces a fully faithful functor $(\varphi, \Gamma_0)\Mod_{S}^{\ptilde} \rightarrow (\varphi, \Gamma_0)\Mod_{A_{F, 0}}^{\etale}$.
	Indeed, observe that we have the following bijection of sets:
	\begin{align*}
		\Hom_{(\varphi, \Gamma)\Mod_{S}^{\ptilde}}(M, M') &\isomorphic \Hom_{(\varphi, \Gamma)\Mod_{\AF^+}^{[p]_q}}(\AF^+ \otimes_{S} M, \AF^+ \otimes_{S} M')\\
			&\isomorphic \Hom_{(\varphi, \Gamma)\Mod_{\AF}^{\etale}}(\AF \otimes_{S} M, \AF \otimes_{S} M')\\
			&\lisomorphic \Hom_{(\varphi, \Gamma)\Mod_{A_{F,0}}^{\etale}}(A_{F,0} \otimes_{S} M, A_{F,0} \otimes_{S} M'),
	\end{align*}
	where the first bijection follows from Theorem \ref{thm:wachmod_s_af+_equiv}, the second bijection follows from Remark \ref{rem:wachtoetale_af+}, and the last bijection follows from \eqref{eq:rep_phigamma}.
	Moreover, the preceding functor preserves short exact sequences because the map $S \rightarrow A_{F,0}$ is flat.
\end{rem}

\subsection{Wach modules and crystalline representations}\label{subsec:wachmods_to_crysreps}

Let $\Rep_{\ZZ_p}^{\crys}(G_F)$ denote the category of $\ZZ_p\textrm{-lattices}$ inside $\padic$ crystalline representations of $G_F$.
To any $T$ in $\Rep_{\ZZ_p}^{\crys}(G_F)$, Berger functorially associated a unique Wach module $\NF(T)$ over $\AF^+$ in \cite{berger-limites}.
More generally, we have the following:
\begin{thm}[{\cite{fontaine-phigamma, wach-free, colmez-hauteur, berger-limites}}]\label{thm:crystalline_wach_af+_equivalence}
	The Wach module functor induces an equivalence of $\otimes\textrm{-catgeories}$
	\begin{equation*}
		\Rep_{\ZZ_p}^{\crys}(G_F) \isomorphic (\varphi, \Gamma_F)\Mod_{\AF^+}^{[p]_q}, \hspace{5mm} T \longmapsto \NF(T),
	\end{equation*}
	with a $\otimes\textrm{-compatible}$ quasi-inverse given as $N \mapsto \TF(N) = \big(W(\CC_p^{\flat}) \otimes_{\AF^+} N\big)^{\varphi=1}$.
\end{thm}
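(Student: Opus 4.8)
The plan is to assemble this statement from the work of Fontaine, Wach, Colmez and Berger \cite{fontaine-phigamma, wach-free, colmez-hauteur, berger-limites}, taking as inputs the equivalence \eqref{eq:rep_phigamma} between $\ZZ_p\textrm{-representations}$ of $G_F$ and étale $(\varphi, \Gamma_F)\modules$ over $\AF$, the fully faithful embedding of Wach modules into étale $(\varphi, \Gamma_F)\modules$ of Remark \ref{rem:wachtoetale_af+}, and Berger's structure theorem \cite[Th\'eor\`eme III.4.4]{berger-limites}. The proof splits into four parts: (i) the construction of the functor $T \mapsto \NF(T)$ and its full faithfulness; (ii) essential surjectivity, i.e.\ that every Wach module over $\AF^+$ arises from a $\ZZ_p\textrm{-lattice}$ in a crystalline representation; (iii) the identification of the quasi-inverse; and (iv) compatibility with tensor products.

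For (i), recall Berger's construction: to $T$ in $\Rep_{\ZZ_p}^{\crys}(G_F)$ one attaches the étale $(\varphi, \Gamma_F)\module$ $\DF(T)$ over $\AF$ as in \eqref{eq:rep_phigamma}, and, exploiting the finite-height property of crystalline representations, one isolates inside $\DF(T)$ a canonical $\AF^+\textrm{-lattice}$ $\NF(T)$ that is stable under $\varphi$ and $\Gamma_F$, carries a trivial $\Gamma_F\action$ modulo $\mu$, satisfies the Frobenius condition of Definition \ref{defi:wachmod_af+}, and satisfies $\AF \otimes_{\AF^+} \NF(T) \isomorphic \DF(T)$; thus $\NF(T)$ is a Wach module over $\AF^+$, functorially in $T$. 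Full faithfulness is then formal: composing the fully faithful base change $N \mapsto \AF \otimes_{\AF^+} N$ of Remark \ref{rem:wachtoetale_af+} with the equivalence \eqref{eq:rep_phigamma} gives $\Hom_{\Rep_{\ZZ_p}(G_F)}(T, T') \isomorphic \Hom_{(\varphi, \Gamma_F)\Mod_{\AF}^{\etale}}(\DF(T), \DF(T')) \lisomorphic \Hom_{(\varphi, \Gamma_F)\Mod_{\AF^+}^{[p]_q}}(\NF(T), \NF(T'))$.

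Parts (ii) and (iii) are the crux. Given a Wach module $N$ over $\AF^+$, set $D := \AF \otimes_{\AF^+} N$, $T := \TF(D) = \big(W(\CC_p^{\flat}) \otimes_{\AF^+} N\big)^{\varphi=1}$ and $V := T[1/p]$; one must show $V$ is crystalline. Twisting $N$ by a suitable power of the rank-one Wach module attached to $\ZZ_p(1)$ — an explicit rank-one $\AF^+\module$ with $\varphi$ built from $[p]_q$ and $\Gamma_F$ acting through $\chi$ — shifts the weights, so one may assume $N$ is effective. One then base changes $N[1/p]$ to Berger's ring $\mathbf{B}^+_{\mathrm{rig}, F}$, inverts $t$, and checks that the resulting $(\varphi, \Gamma_F)\module$ over $\mathbf{B}_{\mathrm{rig}, F}$ is crystalline in Berger's sense: it is here that the triviality of $\Gamma_F$ on $N/\mu N$ enters, forcing $\big(\mathbf{B}^+_{\mathrm{rig}, F}[1/t] \otimes_{\AF^+} N\big)^{\Gamma_F}$ to be an $F\textrm{-vector space}$ of dimension $\dim_{\QQ_p} V$, equivalently that the comparison $\Bcrys(\OFbar) \otimes_F \Dcrys(V) \rightarrow \Bcrys(\OFbar) \otimes_{\QQ_p} V$ is an isomorphism. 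This is exactly \cite[Th\'eor\`eme III.4.4]{berger-limites}, which also furnishes the uniqueness of the Wach module lattice inside $\DF(T)$, whence $\NF(T) \isomorphic N$; and for $T$ crystalline one has $\TF\big(\AF \otimes_{\AF^+} \NF(T)\big) = \TF(\DF(T)) = T$ by \eqref{eq:rep_phigamma}. Hence $N \mapsto \big(W(\CC_p^{\flat}) \otimes_{\AF^+} N\big)^{\varphi=1}$ is a quasi-inverse of $\NF$, and in particular the essential image of $\NF$ is all of $(\varphi, \Gamma_F)\Mod_{\AF^+}^{[p]_q}$. I expect this crystallinity-plus-uniqueness input to be the \emph{main obstacle}: it is the substance of Wach's and Berger's theorems and is not accessible by the formal manipulations above.

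Finally, for (iv): if $N_1, N_2$ are Wach modules over $\AF^+$, then so is $N_1 \otimes_{\AF^+} N_2$ — it is finite free, $\Gamma_F$ acts trivially on its reduction modulo $\mu$, and the cokernel of $1 \otimes \varphi$ on the tensor product is killed by the product of the powers of $[p]_q$ killing the two factors, so the weights add. Since $\DF$, $\TF$ and $\NF$ are monoidal, one obtains natural isomorphisms $\NF(T_1 \otimes T_2) \isomorphic \NF(T_1) \otimes_{\AF^+} \NF(T_2)$ and $\NF(\ZZ_p) \isomorphic \AF^+$, compatibly with the quasi-inverse, which yields the claimed $\otimes\textrm{-equivalence}$. (Exactness of $\NF$, although not part of the assertion, also follows, e.g.\ by applying the exact fully faithful base change $\AF \otimes_{\AF^+}(-)$ to a short exact sequence of Wach modules and invoking \eqref{eq:rep_phigamma}, together with the torsion-freeness and freeness criterion of \cite[Lemma 3.5]{abhinandan-relative-wach-ii} and \cite[Proposition B.1.2.4]{fontaine-phigamma} used in the proof of Proposition \ref{prop:wachmod_fpx_descent}.) Combining (i)–(iv) gives the theorem.
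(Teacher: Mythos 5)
The paper offers no proof of this theorem: it is quoted directly from the literature, with the bracketed citation standing in for the argument. Your outline is consistent with that, and it correctly locates the substance in Berger's Th\'eor\`eme III.4.4 (crystallinity of the representation attached to a Wach module, plus uniqueness of the Wach lattice inside $\DF(T)$), with the remaining steps — full faithfulness via Remark \ref{rem:wachtoetale_af+} and \eqref{eq:rep_phigamma}, twisting to the effective case, and monoidality — being the formal assembly one would expect.

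One caveat on your final parenthetical: the claim that integral exactness of $\NF$ ``also follows'' is not established by your argument and in fact contradicts Remark \ref{rem:crystalline_wach_equivalence_rational} of the paper, which explicitly states that $\NF$ is \emph{not} expected to be exact integrally. Your argument runs in the wrong direction: applying $\AF \otimes_{\AF^+}(-)$ to a short exact sequence of Wach modules and invoking \eqref{eq:rep_phigamma} shows that the quasi-inverse $\TF$ carries exact sequences of Wach modules to exact sequences of lattices, but exactness of $\NF$ would require that a surjection of lattices $T_2 \twoheadrightarrow T_3$ induce a surjection $\NF(T_2) \rightarrow \NF(T_3)$ of $\AF^+\modules$, and the image of $\NF(T_2)$ in $\DF(T_3)$ is a priori only a full-rank $(\varphi, \Gamma_F)\textrm{-stable}$ submodule of $\NF(T_3)$, not all of it. This is precisely why the paper passes to isogeny categories before asserting exactness. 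Since the theorem itself makes no exactness claim, this does not affect the validity of your proof of the stated result, but the parenthetical should be deleted or weakened to the rational statement.
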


Let us recall an important comparison result from \cite{berger-limites}, between a Wach module and its associated $\ZZ_p\textrm{-representation}$, which will be used in Lemma \ref{lem:h0_syn_blochkatoselmer} below.
To recall the result, we need to introduce some notations from op.\ cit.
Let $A$ denote the $\padic$ completion of the maximal unramified extension of $\AF$ inside $W(\CC_p^{\flat})$.
The ring $A$ is stable under the $(\varphi, G_F)\action$ on $W(\CC_p^{\flat})$, and we equip it with induced structures.
Then, we have that $A^{H_F} = \AF$ and $A^{\varphi = 1} = \ZZ_p$.
Moreover, we set $A^+ := \Ainf(\OFbar) \cap A \subset W(\CC_p^{\flat})$, which is stable under the $(\varphi, G_F)\action$ on $A$, and we have that $(A^+)^{H_F} = \AF^+$ and $(A^+)^{\varphi = 1} = \ZZ_p$.
\begin{prop}[{\cite[Th\'eor\`eme III.3.1]{berger-limites}}]\label{prop:wachmod_rep_comp}
	Let $N$ be a Wach module over $\AF^+$ and $\TF(N)$ its associated $\ZZ_p\textrm{-representation}$.
	Then, we have a natural $A^+[1/\mu]\linear$ and $(\varphi, G_F)\equivariant$ comparison isomorphism $A^+[1/\mu] \otimes_{\AF^+} N \isomorphic A^+[1/\mu] \otimes_{\ZZ_p} \TF(N)$.
\end{prop}

\begin{rem}\label{rem:crystalline_wach_equivalence_rational}
	In Theorem \ref{thm:crystalline_wach_af+_equivalence}, note that the functor $\NF$ is not exact, i.e.\ it does not preserve short exact sequences (see \cite[Example 7.1]{chang-diamond}).
	However, by passing to the associated isogeny categories, the Wach module functor induces an exact equivalence of $\otimes\textrm{-categories}$ $\Rep_{\QQ_p}^{\crys}(G_F) \isomorphic (\varphi, \Gamma)\Mod_{\AF^+[1/p]}^{[p]_q}$, via $T[1/p] \mapsto \NF(T)[1/p]$, and with an exact $\otimes\textrm{-compatible}$ quasi-inverse given as $N[1/p] \mapsto \VF(N[1/p]) = \big(W(\CC_p^{\flat}) \otimes_{\AF^+} N[1/p]\big)^{\varphi=1}$ (see \cite[Corollary 4.3]{abhinandan-imperfect-wach}).
\end{rem}

\begin{rem}\label{rem:wachmod_cycltwist}
	Let $N$ be a Wach module over $\AF^+$ and $T = \TF(N)$ the associated $\ZZ_p\representation$ of $G_F$ from Theorem \ref{thm:crystalline_wach_af+_equivalence}.
	Then, for each $r \in \ZZ$, it is straightforward to verify that $\mu^{-r} N(r)$ is a Wach module over $\AF^+$ and $\TF(\mu^{-r} N(r)) \isomorphic T(r)$, where $(r)$ denotes a twist by $\chi^r$ for the Wach module and the representation.
	For many more interesting and explicit examples, we refer the reader to \cite[Appendice A]{berger-limites} and \cite{berger-li-zhu}.
\end{rem}

By combining Theorem \ref{thm:wachmod_s_af+_equiv} and Theorem \ref{thm:crystalline_wach_af+_equivalence}, we obtain the following:
\begin{thm}\label{thm:crystalline_wach_s_equivalence}
	The following functor induces an equivalence of $\otimes\textrm{-catgeories}$
	\begin{equation*}
		\Rep_{\ZZ_p}^{\crys}(G_F) \isomorphic (\varphi, \Gamma_0)\Mod_S^{\ptilde}, \hspace{5mm} T \longmapsto M_F(T) := \NF(T)^{\FF_p^{\times}},
	\end{equation*}
	with a $\otimes\textrm{-compatible}$ quasi-inverse given as $M \mapsto \TF(M) = \big(W(\CC_p^{\flat}) \otimes_S M\big)^{\varphi=1}$.
\end{thm}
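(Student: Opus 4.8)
The plan is to deduce this statement by composing the two equivalences already at our disposal: Theorem \ref{thm:crystalline_wach_af+_equivalence}, which identifies $\Rep_{\ZZ_p}^{\crys}(G_F)$ with $(\varphi, \Gamma_F)\Mod_{\AF^+}^{[p]_q}$ via $T \mapsto \NF(T)$, and Theorem \ref{thm:wachmod_s_af+_equiv}, which identifies $(\varphi, \Gamma_0)\Mod_{S}^{\ptilde}$ with $(\varphi, \Gamma_F)\Mod_{\AF^+}^{[p]_q}$ via $M \mapsto \AF^+ \otimes_S M$ and quasi-inverse $N \mapsto N^{\FF_p^{\times}}$. First I would observe that the functor in the statement is, by the very definition of $M_F$, the composite $T \mapsto \NF(T) \mapsto \NF(T)^{\FF_p^{\times}}$ of $\NF$ followed by the quasi-inverse $(-)^{\FF_p^{\times}}$. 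Since each of these is an equivalence of $\otimes\textrm{-categories}$, so is the composite; its compatibility with tensor products is inherited from that of the two factors, using in addition the natural isomorphism $(N_1 \otimes_{\AF^+} N_2)^{\FF_p^{\times}} \isomorphic N_1^{\FF_p^{\times}} \otimes_S N_2^{\FF_p^{\times}}$ recorded in the proof of Theorem \ref{thm:wachmod_s_af+_equiv}.

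Next I would pin down the quasi-inverse. Running the two quasi-inverse functors in the appropriate order sends a Wach module $M$ over $S$ first to the Wach module $N := \AF^+ \otimes_S M$ over $\AF^+$, and then to $\TF(N) = \big(W(\CC_p^{\flat}) \otimes_{\AF^+} N\big)^{\varphi=1}$. Associativity of the tensor product furnishes a canonical $\varphi\equivariant$ isomorphism $W(\CC_p^{\flat}) \otimes_{\AF^+}(\AF^+ \otimes_S M) \isomorphic W(\CC_p^{\flat}) \otimes_S M$, and transporting along it the $G_F\action$ supplied by Theorem \ref{thm:crystalline_wach_af+_equivalence} realises $\big(W(\CC_p^{\flat}) \otimes_S M\big)^{\varphi=1}$ as an object of $\Rep_{\ZZ_p}^{\crys}(G_F)$ canonically isomorphic to $\TF(N)$. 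This identifies the composite quasi-inverse with the functor $M \mapsto \TF(M) := \big(W(\CC_p^{\flat}) \otimes_S M\big)^{\varphi=1}$ of the statement (which, being a quasi-inverse of a monoidal equivalence, is automatically $\otimes\textrm{-compatible}$); pasting the unit and counit isomorphisms of the two equivalences then supplies the required natural isomorphisms $\Id \isomorphic \TF \circ M_F$ and $M_F \circ \TF \isomorphic \Id$.

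There is no serious obstacle here, as all the mathematical content is carried by Theorem \ref{thm:crystalline_wach_af+_equivalence} and Theorem \ref{thm:wachmod_s_af+_equiv} and what remains is formal bookkeeping. The one point meriting a little care is checking that the $G_F\action$ placed on $\big(W(\CC_p^{\flat}) \otimes_S M\big)^{\varphi=1}$ by transport of structure is functorial in $M$ and independent of the chosen isomorphism $\AF^+ \otimes_S M \isomorphic N$, so that $\TF$ is genuinely a functor on $(\varphi, \Gamma_0)\Mod_{S}^{\ptilde}$; this follows at once from the functoriality built into the two equivalences being composed.
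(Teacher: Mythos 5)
Your argument is correct and is precisely the paper's intended proof: the paper derives Theorem \ref{thm:crystalline_wach_s_equivalence} with the single remark that it follows ``by combining Theorem \ref{thm:wachmod_s_af+_equiv} and Theorem \ref{thm:crystalline_wach_af+_equivalence}'', leaving the composition of the two equivalences and the identification of the quasi-inverse implicit. Your write-up simply supplies that routine bookkeeping, including the correct identification $W(\CC_p^{\flat}) \otimes_{\AF^+} (\AF^+ \otimes_S M) \isomorphic W(\CC_p^{\flat}) \otimes_S M$.
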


\subsection{Nygaard filtration on Wach modules}\label{subsec:nyfil}

In this section, we will study the Nygaard filtration on Wach modules over $\AF^+$ and $S$, respectively.
We begin with the former case.

\subsubsection{Filtration on Wach modules over \texorpdfstring{$\AF^+$}{-}}\label{subsubsec:nyfil_wachmod_af+}

Let $N$ be a Wach module over $\AF^+$.
We equip $N$ with a decreasing filtration called the \textit{Nygaard filtration} as
\begin{equation}\label{eq:nyfil_n}
	\Fil^k N := \{ x \in N \textrm{ such that } \varphi(x) \in [p]_q^k N\}, \textrm{ for } k \in \ZZ.
\end{equation}
From the definition it is clear that $N$ is effective if and only if $\Fil^0 N = N$.
Similarly, we equip the $\AF^+[1/p]\module$ $N[1/p]$ with a Nygaard filtration, and it is easy to see that $\Fil^k(N[1/p]) = (\Fil^k N)[1/p]$.

\begin{lem}[{\cite[Lemma 3.3 and Lemma 3.4]{abhinandan-syntomic}}]\label{lem:wachmod_af+_twist_fil}
	Let $N$ be a Wach module over $\AF^+$.
	\begin{enumarabicup}
		\item For any $k, r \in \ZZ$ and the Wach module $\mu^{-r}N(r)$ over $\AF^+$, we have that $\Fil^k(\mu^{-r}N(r)) = \mu^{-r} (\Fil^{r+k} N) (r)$.

		\item For all $j, k \in \ZZ$, we have that $\mu^{-j} \Fil^k N \cap \mu^{-j+1} N = \mu^{-j+1}\Fil^{k-1} N \subset N[1/\mu]$.
			A similar statement is true for the $\AF^+[1/p]\module$ $N[1/p]$.
	\end{enumarabicup}
\end{lem}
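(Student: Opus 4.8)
For part (1), I would proceed directly from the definitions. The Wach module $\mu^{-r}N(r)$ has underlying $\AF^+$-module $\mu^{-r}N \subset N[1/\mu]$, with $\Gamma_F$ acting through the twist by $\chi^r$ and with Frobenius $\varphi$ acting via the original Frobenius on $N$ (composed with multiplication by the appropriate power coming from $\varphi(\mu)^{-r}$). Concretely, for $x \in N$ the element $\mu^{-r}x \in \mu^{-r}N(r)$ satisfies $\varphi(\mu^{-r}x) = \varphi(\mu)^{-r}\varphi(x) = \mu^{-r}[p]_q^{-r}\varphi(x)$, since $[p]_q = \varphi(\mu)/\mu$. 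Hence $\mu^{-r}x \in \Fil^k(\mu^{-r}N(r))$ iff $\mu^{-r}[p]_q^{-r}\varphi(x) \in [p]_q^k \mu^{-r}N$, i.e.\ iff $\varphi(x) \in [p]_q^{r+k}N$, i.e.\ iff $x \in \Fil^{r+k}N$. This gives exactly $\Fil^k(\mu^{-r}N(r)) = \mu^{-r}(\Fil^{r+k}N)(r)$. I should double-check the sign conventions in the definition of the twist (whether the twist is by $\chi^r$ or $\chi^{-r}$ and correspondingly whether one multiplies by $\mu^{-r}$ or $\mu^{r}$), but this is just bookkeeping matching Remark \ref{rem:wachmod_cycltwist}.

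For part (2), the inclusion $\mu^{-j+1}\Fil^{k-1}N \subseteq \mu^{-j}\Fil^k N \cap \mu^{-j+1}N$ is the easy direction: $\mu^{-j+1}\Fil^{k-1}N \subseteq \mu^{-j+1}N$ is clear, and $\mu^{-j+1}\Fil^{k-1}N = \mu^{-j}(\mu \Fil^{k-1}N) \subseteq \mu^{-j}\Fil^k N$ because $\varphi(\mu \cdot y) = \varphi(\mu)\varphi(y) = \mu[p]_q \varphi(y)$, so $y \in \Fil^{k-1}N$ forces $\mu y \in \Fil^k N$ (using that $\mu \in \AF^+$ so $\mu[p]_q[p]_q^{k-1}N \subseteq [p]_q^k N$). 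For the reverse inclusion, take $z = \mu^{-j}x$ with $x \in \Fil^k N$, and suppose $z \in \mu^{-j+1}N$, i.e.\ $x \in \mu N$, say $x = \mu y$ with $y \in N$. I then need $y \in \Fil^{k-1}N$, i.e.\ $\varphi(y) \in [p]_q^{k-1}N$. From $x = \mu y \in \Fil^k N$ we get $\mu[p]_q\varphi(y) = \varphi(\mu y) \in [p]_q^k N$, hence $\mu \varphi(y) \in [p]_q^{k-1}N$ (here I use that $N[1/\mu]$ is $[p]_q$-torsion free, or rather that multiplication by $[p]_q$ is injective on $N$, which follows since $[p]_q$ is a nonzerodivisor in $\AF^+$ and $N$ is free). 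So it remains to deduce $\varphi(y) \in [p]_q^{k-1}N$ from $\mu\varphi(y) \in [p]_q^{k-1}N$. This is the crux.

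The key point for the last step is that $\mu$ and $[p]_q$ are coprime in $\AF^+$ in the relevant sense: modulo $[p]_q$ (equivalently in $\AF^+/[p]_q\AF^+$, a ring over which $q$ is a primitive $p$-th root of unity so $\mu = q-1$ is a nonzerodivisor — indeed $\AF^+/[p]_q$ is $O_F[\zeta_p]$-ish and $q-1$ is a uniformizer-type element, in particular a nonzerodivisor), multiplication by $\mu$ is injective. More cleanly, $N/[p]_q^{k-1}N$ is a free module over $\AF^+/[p]_q^{k-1}$, and I claim $\mu$ is a nonzerodivisor on $\AF^+/[p]_q^{k-1}$: since $\AF^+$ is a regular local (indeed a two-dimensional regular, or for fixed prime just $O_F\llbracket \mu\rrbracket$) domain and $[p]_q, \mu$ generate distinct height-one primes (their zero loci intersect only in the maximal ideal), $[p]_q^{k-1}$ and $\mu$ share no common factor, so $\mu$ is regular modulo $[p]_q^{k-1}$. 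Therefore $\mu \varphi(y) \in [p]_q^{k-1}N$ with $N/[p]_q^{k-1}N$ free over $\AF^+/[p]_q^{k-1}$ forces $\varphi(y) \in [p]_q^{k-1}N$. This finishes the reverse inclusion, hence part (2). The statement for $N[1/p]$ follows by the same argument, or simply by applying $[1/p]$ to the already-established identity and using $\Fil^k(N[1/p]) = (\Fil^k N)[1/p]$ together with exactness of localization. I expect the main (mild) obstacle to be the coprimality/nonzerodivisor assertion for $\mu$ modulo $[p]_q^{k-1}$, which should be handled by the structure of $\AF^+ = O_F\llbracket\mu\rrbracket$ as a regular domain, together with the explicit factorization $[p]_q \equiv \mu^{p-1}$ (up to unit) modulo $p$.
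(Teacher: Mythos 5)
Your proposal is correct and follows essentially the same route as the paper: part (1) is the identical direct computation using $\varphi(\mu^{-r}) = \mu^{-r}[p]_q^{-r}$ and the triviality of $\varphi$ on $\epsilon^{\otimes r}$, and part (2) reduces to $\Fil^k N \cap \mu N = \mu \Fil^{k-1} N$ by writing $x = \mu y$ and extracting $[p]_q^{k-1}$-divisibility of $\varphi(y)$. The only (cosmetic) difference is the final divisibility step: you argue that $\mu$ is a nonzerodivisor modulo $[p]_q^{k-1}$ via coprimality in the regular local ring $\AF^+$, whereas the paper first reduces to the effective case via (1) and then reduces modulo $\mu$, using $[p]_q \equiv p \pmod{\mu}$ together with the $p$-torsion-freeness of $N/\mu N$ --- two symmetric formulations of the same coprimality of $\mu$ and $[p]_q$, so either works.
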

\begin{proof}
	For the claim (1), note that the inclusion $\mu^{-r} (\Fil^{r+k} N) (r) \subset \Fil^k(\mu^{-r}N(r))$ is obvious.
	For the converse, let $\mu^{-r} x \otimes \epsilon^{\otimes r}$ be an element of $\Fil^k (\mu^{-r} N(r))$, with $x$ in $N$ and $\epsilon^{\otimes r}$ an $\AF^+\textrm{-basis}$ of $\AF^+(r)$.
	By assumption, we have that $\varphi(\mu^{-r} x \otimes \epsilon^{\otimes r}) = ([p]_q \mu)^{-r} \varphi(x) \otimes \epsilon^{\otimes r}$ belongs to $[p]_q^k \mu^{-r} N(r)$.
	Therefore, we see that $\varphi(x)$ belongs to $[p]_q^{r+k} N$, i.e.\ $x$ is in $\Fil^{r+k} N$.

	For (2), note that it is enough to show the claim for $j = 0$, i.e.\ $\Fil^k N \cap \mu N = \mu \Fil^{k-1} N \subset N$.
	Now, using (1) we may assume that $N$ is effective.
	The claim is obvious if $\Fil^{k-1} N = N$.
	So, we may further assume that $\Fil^{k-1} N \subsetneq N$, i.e.\ $k \geqslant 2$.
	Let $x$ be an element of $\Fil^k N \cap \mu N$, and write $x = \mu y$ for some $y$ in $N$.
	We claim that $y$ is in $\Fil^{k-1} N$.
	Indeed, note that $\varphi(x)$ is in $[p]_q^k N$, therefore, we get that $\mu \varphi(y)$ is in $[p]_q^{k-1} N$, i.e.\ $\mu \varphi(y) = [p]_q^{k-1} z$, for some $z$ in $N$.
	In particular, $[p]_q^{r-1}z = p^{r-1}z = 0 \textrm{ mod } \mu N$.
	But, we have that $N/\mu N$ is $p\textrm{-torsion}$ free, so it follows that $z = 0 \textrm{ mod } \mu N$, i.e.\ $y$ belongs to $\Fil^{k-1} N$.
	The other inclusion is obvious, as we have that $\mu \Fil^{k-1} N \subset \Fil^k N$.
	This concludes our proof.
\end{proof}

Next, we note that $(N/\mu N)[1/p]$ is a $\varphi\module$ over $F$ since $[p]_q = p \textmod \mu \AF^+$, and $N/\mu N$ is equipped with a filtration $\Fil^k(N/\mu N)$ given as the image of $\Fil^k N$ under the surjection $N \twoheadrightarrow N/\mu N$.
We equip $(N/\mu N)[1/p]$ with the induced filtration $\Fil^k((N/\mu N)[1/p]) := \Fil^k(N/\mu N)[1/p]$, and note that it is a filtered $\varphi\module$ over $F$.
From \cite[Th\'eor\`eme III.4.4]{berger-limites} and \cite[Theorem 1.7 \& Remark 1.8]{abhinandan-imperfect-wach} we have the following:
\begin{thm}\label{thm:qdeformation_dcrys}
	Let $N$ be a Wach module over $\AF^+$ and $V := \TF(N)[1/p]$ the associated crystalline representation of $G_F$ from Theorem \ref{thm:crystalline_wach_af+_equivalence}.
	Then, we have that $(N/\mu N)[1/p] \isomorphic \Dcrys(V)$ as filtered $\varphi\modules$ over $F$.
\end{thm}

From Theorem \ref{thm:qdeformation_dcrys} we have a surjection $\Fil^k N[1/p] \twoheadrightarrow \Fil^k \Dcrys(V)$, and we would like to determine its kernel.
\begin{lem}\label{lem:fil_nmodmu_ker}
	Let $N$ be a Wach module over $\AF^+$.
	Then, for any $k \in \ZZ$, the following sequence is exact:
	\begin{equation}\label{eq:fil_nmodmu_ker}
		0 \longrightarrow \Fil^{k-1} N \xrightarrow{\hspace{1mm} \mu \hspace{1mm}} \Fil^k N \longrightarrow \Fil^k (N/\mu N) \longrightarrow 0.
	\end{equation}
	In particular, we have that $\kert(\Fil^k N[1/p] \twoheadrightarrow \Fil^k \Dcrys(V)) = \mu \Fil^{k-1}N[1/p]$.
	Moreover, by taking the associated graded pieces, we obtain natural isomorphisms $(\gr^k N)/\mu (\gr^{k-1} N) \isomorphic \gr^k (N/\mu N)$ and $(\gr^k N[1/p])/\mu (\gr^{k-1} N[1/p]) \isomorphic \gr^k \Dcrys(V)$.
\end{lem}
\begin{proof}
	Exactness of \eqref{eq:fil_nmodmu_ker} easily follows from Lemma \ref{lem:wachmod_af+_twist_fil} (2).
	Then, by taking the associated graded pieces, we obtain the following exact sequence:
	\begin{equation*}
		0 \longrightarrow \gr^{k-1} N \xrightarrow{\hspace{1mm} \mu \hspace{1mm}} \gr^k N \longrightarrow \gr^k (N/\mu N) \longrightarrow 0.
	\end{equation*}
	Rest of the claims easily follow from these observations.
\end{proof}

\begin{rem}\label{rem:gamma_minus1_image}
	The Nygaard filtration on a Wach module $N$ over $\AF^+$ is stable under the action of $\Gamma_F$.
	Therefore, for any $g$ in $\Gamma_F$ and $k \in \ZZ$, using Lemma \ref{lem:wachmod_af+_twist_fil} (2) we see that $(g-1)\Fil^k N \subset (\Fil^k N) \cap \mu N = \mu \Fil^{k-1} N$.
\end{rem}

Finally, let us check the compatibility of the Nygaard filtration with exact sequences of Wach modules over $\AF^+$.
So consider the following $\AF^+\linear$ and $(\varphi, \Gamma_F)\equivariant$ exact sequence of Wach modules over $\AF^+$:
\begin{equation}\label{eq:wachmod_es}
	0 \longrightarrow N_1 \longrightarrow N_2 \longrightarrow N_3 \longrightarrow 0.
\end{equation}

\begin{lem}\label{lem:fil_left_exact}
	For $k \in \ZZ$, we have that $N_1 \cap \Fil^k N_2 = \Fil^k N_1$.
\end{lem}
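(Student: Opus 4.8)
The plan is to prove the two inclusions separately, the forward one being formal and the reverse one being the substance. For $\supseteq$: if $x \in \Fil^k N_1$, then $x \in N_1 \subseteq N_2$ and $\varphi(x) \in [p]_q^k N_1 \subseteq [p]_q^k N_2$ (using that $N_1 \hookrightarrow N_2$ is $\varphi$-equivariant after inverting $\mu$), so $x \in \Fil^k N_2$ and hence $x \in N_1 \cap \Fil^k N_2$. No further work is needed here.

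The reverse inclusion rests on a single structural observation: since $N_3 = N_2/N_1$ is a Wach module over $\AF^+$, it is finite free over $\AF^+ = O_F\llbracket \mu \rrbracket$, which is a domain in which $\mu$, $[p]_q$, and hence $\varphi(\mu) = [p]_q\mu$ are nonzero; consequently multiplication by any of these is injective on $N_3$. From this I would extract two elementary facts about the submodule $N_1 \subseteq N_2$. First, $N_1[1/\varphi(\mu)] \cap N_2 = N_1$ inside $N_2[1/\varphi(\mu)]$: if $\varphi(\mu)^j z \in N_1$ for some $z \in N_2$ and $j \geq 0$, then the image of $z$ in $N_3$ is killed by $\varphi(\mu)^j$, hence zero, so $z \in N_1$. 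Second, by the same token $[p]_q^m N_2 \cap N_1 = [p]_q^m N_1$ for every integer $m \geq 0$.

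Now take $x \in N_1 \cap \Fil^k N_2$, so $x \in N_1$ and $\varphi(x) \in [p]_q^k N_2$ as elements of $N_2[1/\varphi(\mu)]$. By $\varphi$-equivariance of the inclusion, $\varphi(x)$ already lies in $N_1[1/\varphi(\mu)]$. Pick $m \geq \max(0, -k)$; then $[p]_q^m\varphi(x) \in [p]_q^{m+k} N_2 \subseteq N_2$, and also $[p]_q^m\varphi(x) \in N_1[1/\varphi(\mu)]$, so the first fact gives $[p]_q^m\varphi(x) \in N_1$. Combined with $[p]_q^m\varphi(x) \in [p]_q^{m+k}N_2$ and the second fact (applied with exponent $m+k \geq 0$), this yields $[p]_q^m\varphi(x) \in [p]_q^{m+k}N_1$, i.e.\ $\varphi(x) \in [p]_q^k N_1$ after dividing by $[p]_q^m$ inside $N_1[1/[p]_q]$. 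Hence $x \in \Fil^k N_1$, completing the proof.

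I expect the only delicate point — and the main thing to get right — is tracking where $\varphi(x)$ lives: it is a priori merely an element of $N_2[1/\varphi(\mu)]$, so one must invoke $\varphi$-equivariance of $N_1 \hookrightarrow N_2$ to place it in $N_1[1/\varphi(\mu)]$ before intersecting with $N_2$. Note also that the argument above is uniform in $k \in \ZZ$, so no reduction to the effective case is required; alternatively, one could first twist by $\mu^{-r}(r)$ via Lemma \ref{lem:wachmod_af+_twist_fil}(1) and Remark \ref{rem:wachmod_cycltwist} to assume all $N_i$ effective, in which case $\varphi$ restricts to genuine endomorphisms of the $N_i$ and the bookkeeping simplifies slightly, but this is not necessary.
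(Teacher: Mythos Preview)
Your proof is correct and follows essentially the same strategy as the paper: both exploit that $N_3 = N_2/N_1$ is finite free over $\AF^+$, hence torsion-free, to show that $N_1$ is ``saturated'' in $N_2$ for the relevant elements. The paper packages this more compactly by passing to the \'etale $(\varphi,\Gamma_F)$-modules $D_i := \AF \otimes_{\AF^+} N_i$, where $[p]_q$ is already a unit; the single identity $N_1 = D_1 \cap N_2 \subset D_2$ then lets one write $[p]_q^{-k}\varphi(x) \in D_1 \cap N_2 = N_1$ in one step, bypassing your separate ``first fact'' and ``second fact'' and the auxiliary exponent $m$.
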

\begin{proof}
	Let $D_i := \AF \otimes_{\AF^+} N_i$, for $i = 1, 2$.
	Note that we have $N_1 := D_1 \cap N_2 \subset D_2$.
	So, if $x$ is in $N_1 \cap \Fil^k N_2$, then we see that $\varphi(x)$ is in $D_1 \cap [p]_q^k N_2$, i.e.\ $[p]_q^{-k}\varphi(x)$ is in $D_1 \cap N_2 = N_1$.
	Hence, we get that $x$ in $\Fil^k N_1$.
\end{proof}

\begin{rem}\label{rem:mu_fil_intersect}
	For any $j, k \in \ZZ$, we have that $N_1 \cap \mu^j \Fil^k N_2 = \mu^j \Fil^k N_1$.
	Indeed, using the same notation as in the proof of Lemma \ref{lem:fil_left_exact}, we note that if $x$ is in $N_1 \cap \mu^j N_2$, then we may write $x = \mu^j y$ for some $y$ in $N_2$, and we see that $y = \mu^{-j} x$ is in $D_1 \cap N_2 = N_1$, i.e.\ $x$ is in $\mu^j N_1$.
	Combining this with Lemma \ref{lem:fil_left_exact} we get the claim.
\end{rem}

The statement of Lemma \ref{lem:fil_left_exact} can be strengthened after inverting $p$.
More precisely, we have the following:
\begin{prop}\label{prop:wachmod_fil_es}
	The following sequence is exact for each $k \in \ZZ$:
	\begin{equation}\label{eq:wachmod_fil_es}
		0 \longrightarrow \Fil^k N_1[1/p] \longrightarrow \Fil^k N_2[1/p] \longrightarrow \Fil^k N_3[1/p] \longrightarrow 0.
	\end{equation}
\end{prop}
\begin{proof}
	For each $i = 1, 2, 3$ and $r \in \ZZ$, from Remark \ref{rem:wachmod_cycltwist} note that $\mu^{-r}N_i(r)$, where $(r)$ denotes a twist by $\chi^r$, is again a Wach module over $\AF^+$, and \eqref{eq:wachmod_es} is exact if and only if the following sequence is exact:
	\begin{equation*}
		0 \longrightarrow \mu^{-r}N_1(r) \longrightarrow \mu^{-r}N_2(r) \longrightarrow \mu^{-r}N_3(r) \longrightarrow 0.
	\end{equation*}
	Now, let $P_i := N_i[1/p]$ for $i = 1, 2, 3$, and by using Lemma \ref{lem:wachmod_af+_twist_fil} (1), note that we have $\Fil^{k-r}(\mu^{-r} P_i(r)) = \mu^{-r} \Fil^k P_i (r)$.
	Therefore, we see that \eqref{eq:wachmod_fil_es} is exact if and only if the following is exact:
	\begin{equation*}
		0 \longrightarrow \Fil^{k-r} (\mu^{-r} P_1(r)) \longrightarrow \Fil^{k-r} (\mu^{-r} P_2(r)) \longrightarrow \Fil^{k-r} (\mu^{-r} P_2(r)) \longrightarrow 0.
	\end{equation*}
	In particular, without loss of generality we may assume that each $N_i$ is an effective Wach module over $\AF^+$, for $i \in \{1, 2, 3\}$, in particular, $\Fil^0 N_i = N_i$ and $\Fil^0 P_i = P_i$.
	We shall prove the claim by induction on $k \in \NN$.
	So let us assume the claim for $k-1$, and consider the following diagram:
	\begin{equation}\label{eq:wachmod_fil_diag}
		\begin{tikzcd}
			& 0 \arrow[d] & 0 \arrow[d] & 0 \arrow[d]\\
			0 \arrow[r] & \mu \Fil^{k-1} P_1 \arrow[r] \arrow[d] & \Fil^k P_1 \arrow[r] \arrow[d] & \Fil^k(P_1/\mu P_1) \arrow[r] \arrow[d] & 0\\
			0 \arrow[r] & \mu \Fil^{k-1} P_2 \arrow[r] \arrow[d] & \Fil^k P_2 \arrow[r] \arrow[d] & \Fil^k(P_2/\mu P_2) \arrow[r] \arrow[d] & 0\\
			0 \arrow[r] & \mu \Fil^{k-1} P_3 \arrow[r] \arrow[d] & (\Fil^k P_2)/(\Fil^k P_1) \arrow[r] \arrow[d] & \Fil^k(P_3/\mu P_3) \arrow[r] \arrow[d] & 0\\
			& 0 & 0 & 0.
		\end{tikzcd}
	\end{equation}
	In \eqref{eq:wachmod_fil_diag}, note that the top and the middle rows are exact by \eqref{eq:fil_nmodmu_ker} in Lemma \ref{lem:fil_nmodmu_ker}, and the left column is exact by the induction assumption.
	In the middle column, using that $P_1 = (\AF \otimes_{\AF^+} P_1) \cap P_2 \subset \AF \otimes_{\AF^+} P_2$, it easily follows that $\Fil^k P_1 \subset \Fil^k P_2$.
	Now, let $V_i := \VF(P_i)$, for each $i = 1, 2, 3$ (see Remark \ref{rem:crystalline_wach_equivalence_rational}).
	Then, from Theorem \ref{thm:qdeformation_dcrys}, we have filtered isomorphisms $\Fil^k(P_i/\mu P_i) \isomorphic \Fil^k \Dcrys(V_i)$.
	Recall that $\Dcrys$ is an exact functor, and in the category $\MF_F\wa(\varphi)$ (see Section \ref{subsec:padicreps}) exact sequences are compatible with filtration.
	So we get that the right column in \eqref{eq:wachmod_fil_diag} is also exact.
	Hence, it follows that the bottom row in \eqref{eq:wachmod_fil_diag} is exact.
	Now, consider the following commutative diagram with exact rows:
	\begin{center}
		\begin{tikzcd}
			0 \arrow[r] & \Fil^k P_1 \arrow[r] \arrow[d] & \Fil^k P_2 \arrow[r] \arrow[d] & (\Fil^k P_2)/(\Fil^k P_1) \arrow[r] \arrow[d] & 0\\
			0 \arrow[r] & P_1 \arrow[r] & P_2 \arrow[r] & P_3 \arrow[r] & 0,
		\end{tikzcd}
	\end{center}
	where the left and middle vertical arrows are natural injective maps, and the right vertical arrow is injective because $\Fil^k P_1 = P_1 \cap \Fil^k P_2 \subset P_2$ (see Lemma \ref{lem:fil_left_exact}).
	Moreover, chasing an element of $(\Fil^k P_2)/(\Fil^k P_1)$ easily shows that the right vertical arrow factors as injective maps $(\Fil^k P_2)/(\Fil^k P_1) \hookrightarrow \Fil^k P_3 \hookrightarrow P_3$.
	In particular, the discussion above induces a natural map from the bottom exact row of \eqref{eq:wachmod_fil_diag} to the exact sequence \eqref{eq:fil_nmodmu_ker} for $P_3$ (see Lemma \ref{lem:fil_nmodmu_ker}):
	\begin{center}
		\begin{tikzcd}
			0 \arrow[r] & \mu \Fil^{k-1} P_3 \arrow[r] \arrow[d, equal] & (\Fil^k P_2)/(\Fil^k P_1) \arrow[r] \arrow[d] & \Fil^k(P_3/\mu P_3) \arrow[r] \arrow[d, equal] & 0\\
			0 \arrow[r] & \mu \Fil^{k-1} P_3 \arrow[r] & \Fil^k P_3 \arrow[r] & \Fil^k(P_3/\mu P_3) \arrow[r] & 0,
		\end{tikzcd}
	\end{center}
	where the middle vertical arrow is injective.
	By five lemma, it follows that $(\Fil^k P_2)/(\Fil^k P_1) \isomorphic \Fil^k P_3$, thus proving the claim.
\end{proof}

\begin{rem}
	The statement of Proposition \ref{prop:wachmod_fil_es} is of independent interest, and should be seen as a statement parallel to the fact that an exact sequence in the category $\MF_F\wa(\varphi)$ is strict, i.e.\ it remains exact on filtrations.
	Moreover, Proposition \ref{prop:wachmod_fil_es} also ensures that the functor from the category of Wach modules over $\AF^+[1/p]$ to the category of complexes of $\ZZ_p\modules$ given as $N \mapsto \pazs^{\bullet}(N)$ (see Definition \ref{defi:syntomic_complex_af+}), i.e.\ by sending $N$ to the syntomic complex with coefficients in $N$, is an exact functor.
\end{rem}

\subsubsection{Filtration on Wach modules over \texorpdfstring{$S$}{-}}\label{subsubsec:nyfil_m}

Let $M$ be a Wach module over $S$.
We equip $M$ with a decreasing filtration called the \textit{Nygaard filtration} as
\begin{equation}\label{eq:nyfil_m}
	\Fil^k M := \{ x \in N \textrm{ such that } \varphi(x) \in \ptilde^k M\}, \textrm{ for } k \in \ZZ.
\end{equation}
From the definition it is clear that $M$ is effective if and only if $\Fil^0 M = M$.
Now, let $N := \AF^+ \otimes_S M$, and note that the natural $S\linear$ map $M \rightarrow N$ is injective and $(\varphi, \Gamma_F)\equivariant$.
Moreover, it is easy to see that $\Fil^k M = M \cap \Fil^k N \subset N$, where the Nygaard filtration on $N$ was defined in \eqref{eq:nyfil_n}.
Similar to Lemma \ref{lem:wachmod_af+_twist_fil}, we claim the following:

\begin{lem}\label{lem:wachmod_s_twist_fil}
	For all $k \in \ZZ$, we have that $\Fil^k M \cap \mu_0 M = \mu_0\Fil^{k-p+1} M \subset M[1/\mu_0]$.
\end{lem}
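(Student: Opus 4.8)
The plan is to reduce the assertion to the already-established statement over $\AF^+$, namely Lemma \ref{lem:wachmod_af+_twist_fil}~(2), using the faithfully flat extension $S \hookrightarrow \AF^+$ together with the $\FF_p^{\times}\textrm{-decomposition}$. Write $N := \AF^+ \otimes_S M$, which is a Wach module over $\AF^+$ with $M = N^{\FF_p^{\times}}$ (Proposition \ref{prop:wachmod_fpx_descent}), and recall the two inputs I will use repeatedly: $\Fil^j M = M \cap \Fil^j N$ inside $N$ for every $j \in \ZZ$, and $\mu_0 = v\mu^{p-1}$ for some unit $v$ of $\AF^+$ (Subsection \ref{subsec:period_rings}). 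First I would observe that intersecting $\FF_p^{\times}\textrm{-stable}$ $\AF^+\textrm{-submodules}$ of $N$ with $M$ commutes with multiplication by $\mu_0$: since $\mu_0$ is $\FF_p^{\times}\textrm{-invariant}$, multiplication by $\mu_0$ preserves each summand of $N = \bigoplus_i N_i$, and since each $\Fil^j N$ is $\Gamma_F\textrm{-stable}$ (Remark \ref{rem:gamma_minus1_image}), hence $\FF_p^{\times}\textrm{-stable}$, the invertibility of $p - 1$ in $\ZZ_p$ gives $M \cap \mu_0 N = \mu_0 M$ and, for every $j \in \ZZ$, $M \cap \mu_0\Fil^j N = \mu_0\Fil^j M$.

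Next I would compute $\Fil^k N \cap \mu_0 N$ inside $N[1/\mu]$. Because $v$ and $\varphi(v)$ are units of $\AF^+$, each $\Fil^j N$ is stable under multiplication by units of $\AF^+$, so $\mu_0 N = \mu^{p-1} N$ and $\mu_0\Fil^{k-p+1}N = \mu^{p-1}\Fil^{k-p+1}N$. Iterating Lemma \ref{lem:wachmod_af+_twist_fil}~(2) — with the inductive step using $\mu\textrm{-torsion}$ freeness of $N$ to strip off one power of $\mu$ at a time — yields $\Fil^k N \cap \mu^m N = \mu^m\Fil^{k-m}N$ for every $m \geq 0$ and every $k \in \ZZ$; taking $m = p-1$ gives $\Fil^k N \cap \mu_0 N = \mu_0\Fil^{k-p+1}N$.

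Combining the two steps, $\Fil^k M \cap \mu_0 M = M \cap (\Fil^k N \cap \mu_0 N) = M \cap \mu_0\Fil^{k-p+1}N = \mu_0\Fil^{k-p+1}M$, the equality taking place inside $M[1/\mu_0] \subset N[1/\mu]$ (for $k < p-1$ the negative Nygaard steps are interpreted exactly as in Lemma \ref{lem:wachmod_af+_twist_fil}). I expect the main obstacle to be bookkeeping rather than conceptual: one must check carefully that each relevant submodule of $N$ is genuinely $\FF_p^{\times}\textrm{-stable}$, so that intersecting with $M = N^{\FF_p^{\times}}$ interacts correctly both with multiplication by $\mu_0$ and with the passage to $\Fil^j$ — once this is in place, the statement falls out by a purely formal iteration of the corresponding $\AF^+$ identity.
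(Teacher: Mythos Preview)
Your proposal is correct and follows essentially the same route as the paper: pass to $N = \AF^+ \otimes_S M$, iterate Lemma~\ref{lem:wachmod_af+_twist_fil}~(2) to get $\Fil^k N \cap \mu_0 N = \mu_0 \Fil^{k-p+1} N$, and then intersect back with $M$ using $\Fil^j M = M \cap \Fil^j N$. The only cosmetic difference is that where you invoke the $\FF_p^{\times}\textrm{-decomposition}$ to justify $M \cap \mu_0 N = \mu_0 M$ and $M \cap \mu_0 \Fil^{k-p+1} N = \mu_0 \Fil^{k-p+1} M$, the paper simply uses $\mu_0\textrm{-torsion}$ freeness of $N$ to write $\mu_0 \Fil^{k-p+1} N \cap \mu_0 M = \mu_0(\Fil^{k-p+1} N \cap M)$ directly; both justifications are fine.
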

\begin{proof}
	From Lemma \ref{lem:wachmod_af+_twist_fil}, note that we have $\Fil^k N \cap \mu N = \mu \Fil^{k-1} N$.
	Moreover, recall that $\mu_0$ is given as the product of $\mu^{p-1}$ with a unit in $\AF^+$.
	Therefore, it follows that  $\Fil^k N \cap \mu_0 N = \Fil^k N \cap \mu^{p-1} N = \mu^{p-1} \Fil^{k-p+1} N = \mu_0 \Fil^{k-p+1} N$.
	Hence, we obtain that $\Fil^k M \cap \mu_0 M = \Fil^k N \cap \mu_0 M = \mu_0 \Fil^{k-p+1} N \cap \mu_0 M = \mu_0 \Fil^{k-p+1} M$.
\end{proof}

Next, we equip the $O_F\module$ $M/\mu_0 M$ with a filtration $\Fil^k(M/\mu_0 M)$, for $k \in \ZZ$, and given as the image of $\Fil^k M$ under the surjection $M \twoheadrightarrow M/\mu_0 M$.
Moreover, we equip the $O_F\module$ $N/\mu N$ with a filtration $\Fil^k(N/\mu N)$, for $k \in \ZZ$, and given as the image of $\Fil^k N$ under the surjection $N \twoheadrightarrow N/\mu N$ (see Section \ref{subsubsec:nyfil_wachmod_af+}).
From Proposition \ref{prop:wachmod_fpx_descent}, we have that the natural $S\linear$ and $(\varphi, \Gamma_F)\equivariant$ map $M \rightarrow N$ induces an $O_F\linear$ isomorphism $M/\mu_0 M \isomorphic N/\mu N$ compatible with the respective Frobenii.
Then, we claim the following:
\begin{lem}\label{lem:mmodmu0_nmodmu_fil}
	The natural isomorphism $M/\mu_0 M \isomorphic N/\mu N$ induces an $O_F\linear$ isomorphism $\Fil^k (M/\mu_0 M) \isomorphic \Fil^k (N/\mu N)$, for each $k \in \ZZ$.
\end{lem}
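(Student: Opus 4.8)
The plan is to split the claim into a formal part (well-definedness and injectivity) and the one substantive part (surjectivity), and to handle surjectivity via the $\FF_p^{\times}\textrm{-decomposition}$ of Remark \ref{rem:fpx_decomp}. First I would observe that the $S\linear$ map $M \rightarrow N$ carries $\Fil^k M$ into $\Fil^k N$: this is immediate from the identity $\Fil^k M = M \cap \Fil^k N$ recalled just above the lemma. Passing to quotients, and using that (by Proposition \ref{prop:wachmod_fpx_descent}) the composite $M \rightarrow N \twoheadrightarrow N/\mu N$ factors as $M \twoheadrightarrow M/\mu_0 M \isomorphic N/\mu N$, one obtains an $O_F\linear$ map $\Fil^k(M/\mu_0 M) \rightarrow \Fil^k(N/\mu N)$ which is a restriction of the isomorphism $M/\mu_0 M \isomorphic N/\mu N$, hence injective. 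So the entire content of the lemma is surjectivity of this map.

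For surjectivity I would argue as follows. By Remark \ref{rem:gamma_minus1_image}, the Nygaard filtration $\Fil^k N$ is stable under $\Gamma_F$, hence under the subgroup $\FF_p^{\times} \subset \Gamma_F$ of Teichm\"uller lifts; therefore $\Fil^k N$ inherits a decomposition $\Fil^k N = \bigoplus_i (\Fil^k N)_i$ with $(\Fil^k N)_i = \Fil^k N \cap N_i$, compatibly with $N = \bigoplus_i N_i$. In particular $(\Fil^k N)_0 = \Fil^k N \cap N_0 = \Fil^k N \cap M = \Fil^k M$. On the other hand, $\Gamma_F$ acts trivially on $N/\mu N$, so $N/\mu N$ is concentrated in the $0\textrm{-component}$, i.e.\ $(N/\mu N)_i = 0$ for $i \neq 0$ (for a nontrivial $(p-1)\textrm{-st}$ root of unity $\zeta$ the element $\zeta - 1$ is a unit in $O_F$, so it annihilates no nonzero $O_F\module$). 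Since the surjection $N \twoheadrightarrow N/\mu N$ is $\FF_p^{\times}\equivariant$, each component $(\Fil^k N)_i$ with $i \neq 0$ maps to $0$ in $N/\mu N$, and hence the image of $\Fil^k N$ in $N/\mu N$ equals the image of $(\Fil^k N)_0 = \Fil^k M$.

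Combining the two paragraphs: under the isomorphism $M/\mu_0 M \isomorphic N/\mu N$ the image of $\Fil^k M$ in $N/\mu N$ is exactly $\Fil^k(M/\mu_0 M)$, while by the previous paragraph it equals $\Fil^k(N/\mu N)$, the image of $\Fil^k N$. Thus the injective map of the first paragraph is onto, which proves the lemma. I do not expect a genuine obstacle here: the only points that need a little care are the $\FF_p^{\times}\textrm{-stability}$ of the Nygaard filtration (already recorded in Remark \ref{rem:gamma_minus1_image}) and the vanishing $(N/\mu N)_i = 0$ for $i \neq 0$; everything else is bookkeeping with the decomposition. A more computational alternative would be to match up the short exact sequences $0 \rightarrow \mu_0 \Fil^{k-p+1} M \rightarrow \Fil^k M \rightarrow \Fil^k(M/\mu_0 M) \rightarrow 0$ (from Lemma \ref{lem:wachmod_s_twist_fil}) and $0 \rightarrow \mu \Fil^{k-1} N \rightarrow \Fil^k N \rightarrow \Fil^k(N/\mu N) \rightarrow 0$ (from Lemma \ref{lem:fil_nmodmu_ker}), but this comparison still ultimately rests on the $\FF_p^{\times}\textrm{-decomposition}$, so I would prefer the direct argument above.
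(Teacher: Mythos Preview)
Your argument is correct. Both the injectivity (restriction of an isomorphism) and the surjectivity (via the $\FF_p^{\times}\textrm{-decomposition}$, using that $(\Fil^k N)_0 = \Fil^k M$ and that $N/\mu N$ is concentrated in the $0\textrm{-component}$) are clean and complete.

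The paper actually takes what you call the ``computational alternative'': it writes down the two-row diagram with the exact sequences $0 \rightarrow \mu_0 \Fil^{k-p+1} M \rightarrow \Fil^k M \rightarrow \Fil^k(M/\mu_0 M) \rightarrow 0$ and $0 \rightarrow \mu \Fil^{k-1} N \rightarrow \Fil^k N \rightarrow \Fil^k(N/\mu N) \rightarrow 0$, proves injectivity by identifying $\mu \Fil^{k-1} N \cap \Fil^k M = \mu_0 \Fil^{k-p+1} M$ (via $\mu N \cap M = \mu_0 M$ and Lemma \ref{lem:wachmod_s_twist_fil}), and then argues surjectivity by lifting $x \in \Fil^k(N/\mu N)$ to an element $y \in M$ and asserting $y \in \Fil^k N \cap M = \Fil^k M$. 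That last assertion is terse at best---an arbitrary lift of $x$ to $N$ need not lie in $\Fil^k N$---and what really makes it work is precisely your observation that only the $0\textrm{-component}$ of $\Fil^k N$ contributes to $\Fil^k(N/\mu N)$. So your direct $\FF_p^{\times}\textrm{-decomposition}$ argument is not only correct but arguably more transparent than the paper's; the diagram route buys a bit more (it simultaneously pins down the kernels), but for the statement at hand your approach is the shorter path.
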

\begin{proof}
	Consider the following $(\varphi, \Gamma_F)\equivariant$ commutative diagram:
	\begin{equation}\label{eq:mmodmu0_nmodmu_fil}
		\begin{tikzcd}
			0 \arrow[r] & \mu_0 \Fil^{k-p+1} M \arrow[r] \arrow[d] & \Fil^k M \arrow[r] \arrow[d] & \Fil^k(M/\mu_0 M) \arrow[r] \arrow[d] & 0\\
			0 \arrow[r] & \mu \Fil^{k-1} N \arrow[r] & \Fil^k N \arrow[r] & \Fil^k(N/\mu N) \arrow[r] & 0,
		\end{tikzcd}
	\end{equation}
	where the top row is exact by Lemma \ref{lem:wachmod_s_twist_fil}, and the bottom row is the exact sequence \eqref{eq:fil_nmodmu_ker} in Lemma \ref{lem:fil_nmodmu_ker}.
	Now, it is clear that $\mu_0 \Fil^{k-p+1} M \subset \mu \Fil^{k-1} N \cap \Fil^k M$ (since $\mu_0$ is the product of $\mu^{p-1}$ with a unit in $\AF^+$, see Section \ref{subsec:period_rings}).
	Conversely, note that we have $\mu \Fil^{k-1} N \cap \Fil^k M \subset \mu N \cap M \cap \Fil^k M = \mu_0 M \cap \Fil^k M = \mu_0 \Fil^{k-p+1} M$.
	Therefore, it follows that the right vertical arrow in \eqref{eq:mmodmu0_nmodmu_fil} is injective, and we claim that it is surjective as well.
	Indeed, let $x$ be an element of $\Fil^k(N/\mu N)$, and let $y$ in $M$ be a lift of $x$, under the composition $M \twoheadrightarrow M/\mu_0 M \isomorphic N/\mu N$.
	Then, via the natural $S\linear$ and $(\varphi, \Gamma_F)\equivariant$ injective homomorphism $M \rightarrow N$, we see that $y$ is in $N$ and a lift of $x$.
	In particular, we get that $y$ is in $\Fil^k N \cap M = \Fil^k M$.
	Taking the image of $y$ under the map $\Fil^k M \rightarrow \Fil^k(M/\mu_0 M)$ gives a lifting of $x$ under the right vertical map of \eqref{eq:mmodmu0_nmodmu_fil}.
	Hence, we obtain that $\Fil^k (M/\mu_0 M) \isomorphic \Fil^k (N/\mu N)$.
\end{proof}

Now, note that $(M/\mu_0 M)[1/p]$ is a $\varphi\module$ over $F$ since $\ptilde = p \textmod \mu_0 M$, and $M/\mu_0 M$ is equipped with a filtration $\Fil^k(M/\mu_0 M)$ as above.
We equip $(M/\mu_0 M)[1/p]$ with the induced filtration $\Fil^k((M/\mu_0 M)[1/p]) := \Fil^k(M/\mu_0 M)[1/p]$, and note that it is a filtered $\varphi\module$ over $F$.
By combining Theorem \ref{thm:qdeformation_dcrys} and Lemma \ref{lem:mmodmu0_nmodmu_fil}, we get the following:
\begin{thm}\label{thm:mu0_deformation_dcrys}
	Let $M$ be a Wach module over $S$ and $V := \TF(M)[1/p]$ the associated crystalline representation of $G_F$ from Theorem \ref{thm:crystalline_wach_s_equivalence}.
	Then, we have that $(M/\mu_0 M)[1/p] \isomorphic \Dcrys(V)$ as filtered $\varphi\modules$ over $F$.
\end{thm}

From Theorem \ref{thm:mu0_deformation_dcrys} we have a surjection $\Fil^k M[1/p] \twoheadrightarrow \Fil^k \Dcrys(V)$, and we would like to determine its kernel.
\begin{lem}\label{lem:fil_mmodmu0_ker}
	Let $M$ be a Wach module over $S$.
	Then, for any $k \in \ZZ$, the following sequence is exact:
	\begin{equation}\label{eq:fil_mmodmu0_ker}
		0 \longrightarrow \Fil^{k-p+1} M \xrightarrow{\hspace{1mm} \mu_0 \hspace{1mm}} \Fil^k M \longrightarrow \Fil^k (M/\mu_0 M) \longrightarrow 0.
	\end{equation}
	In particular, we have that $\kert(\Fil^k M[1/p] \twoheadrightarrow \Fil^k \Dcrys(V)) = \mu_0 \Fil^{k-p+1}M[1/p]$.
	Moreover, by taking the associated graded pieces, we obtain natural isomorphisms $(\gr^k M)/\mu_0(\gr^{k-p+1} M) \isomorphic \gr^k (M/\mu_0 M)$ and $(\gr^k M[1/p])/\mu_0(\gr^{k-p+1} M[1/p]) \isomorphic \gr^k \Dcrys(V)$.
\end{lem}
\begin{proof}
	Exactness of \eqref{eq:fil_mmodmu0_ker} easily follows from Lemma \ref{lem:wachmod_s_twist_fil}.
	Then, by taking the associated graded pieces, we obtain the following exact sequence:
	\begin{equation*}
		0 \longrightarrow \gr^{k-p+1} M \xrightarrow{\hspace{1mm} \mu_0 \hspace{1mm}} \gr^k M \longrightarrow \gr^k (M/\mu M) \longrightarrow 0.
	\end{equation*}
	Rest of the claims easily follow from these observations.
\end{proof}

\begin{rem}\label{rem:gamma0_minus1_image}
	The Nygaard filtration on a Wach module $M$ over $S$ is stable under the action of $\Gamma_0$.
	Therefore, for any $g$ in $\Gamma_0$ and $k \in \ZZ$, using Lemma \ref{lem:wachmod_s_twist_fil} we see that $(g-1)\Fil^k M \subset (\Fil^k M) \cap \mu_0 M = \mu_0 \Fil^{k-p+1} M$.
\end{rem}

\section{Syntomic complexes and Galois cohomology}\label{sec:syntomic_complex_galcoh}

In this section, we will define syntomic complexes with coefficients in a Wach module over $\AF^+$ and $S$, respectively, and show that -- after inverting $p$ -- our complexes compute the crystalline part of the Galois cohomology of the associated crystalline representation (see Theorem \ref{thm:syntomic_blochkatoselmer_af+} and Theorem \ref{thm:syntomic_blochkatoselmer_s}).

\subsection{Syntomic complex over \texorpdfstring{$\AF^+$}{-}}\label{subsec:syntomic_complex_af+}

Let $N$ be a Wach module over $\AF^+$, and define an operator $\nabla_q := \tfrac{\gamma-1}{\mu} \colon N \rightarrow N$.
From Remark \ref{rem:gamma_minus1_image}, note that we have $\nabla_q(\Fil^k N) \subset \Fil^{k-1} N$, for each $k \in \ZZ$.
\begin{defi}\label{defi:syntomic_complex_af+}
	Define the \textit{syntomic complex} with coefficients in $N$ as
	\begin{equation}\label{eq:syntomic_complex_af+}
		\pazs^{\bullet}(N) \colon \Fil^0 N \xrightarrow{\hspace{1mm}(\nabla_q, 1-\varphi)\hspace{1mm}} \Fil^{-1} N \oplus N \xrightarrow{\hspace{1mm}(1-[p]_q\varphi, \nabla_q)^{\intercal}\hspace{1mm}} N,
	\end{equation}
	where the first map is given as $x \mapsto (\nabla_q(x), (1-\varphi)x)$, and the second map is given as $(x, y) \mapsto (1-[p]_q\varphi)x - \nabla_q(y)$.
\end{defi}

The goal of this section is to show the following claim:
\begin{thm}\label{thm:syntomic_blochkatoselmer_af+}
	Let $N$ be a Wach module over $\AF^+$ and $V = \TF(N)[1/p]$ the associated $\padic$ crystalline representation of $G_F$ from Theorem \ref{thm:crystalline_wach_af+_equivalence}.
	Then, we have a natural isomorphism, for each $k \in \NN$,
	\begin{equation*}
		H^k(\pazs^{\bullet}(N))[1/p] \isomorphic H^k_f(G_F, V).
	\end{equation*}
\end{thm}
\begin{proof}
	The claim for $H^0_f$ follows from Lemma \ref{lem:h0_syn_blochkatoselmer}.
	For $H^1_f$ recall that from Remark \ref{rem:h1f_extclass} we have a natural isomorphism 
	\begin{equation*}
		H^1_f(G_F, V) \isomorphic \Ext^1_{\Rep_{\QQ_p}^{\crys}(G_F)}(\QQ_p, V).
	\end{equation*}
	Moreover, from Remark \ref{rem:crystalline_wach_equivalence_rational} the functors $\NF$ and its quasi-inverse $\VF$ are exact.
	Therefore, we have a natural isomorphism
	\begin{equation*}
		\Ext^1_{(\varphi, \Gamma_F)\Mod_{\AF^+[1/p]}^{[p]_q}}(\AF^+[1/p], N[1/p]) \isomorphic \Ext^1_{\Rep_{\QQ_p}^{\crys}(G_F)}(\QQ_p, V).
	\end{equation*}
	Combining these observations with Corollary \ref{cor:h1syn_ext1wach_bf+}, we get a natural isomorphism 
	\begin{equation*}
		H^1(\pazs^{\bullet}(N))[1/p] \isomorphic H^1_f(G_F, V).
	\end{equation*}
	Finally, note that the Wach module $N$ over $\AF^+$ can always be written as a twist of an effective Wach module over $\AF^+$, and similarly, the representation $V = \TF(N)[1/p]$ is the twist of the corresponding positive crystalline representation (i.e.\ all Hodge--Tate weights less than or equal to 0) by a power of the cyclotomic character (see Remark \ref{rem:wachmod_cycltwist}).
	Therefore, the claim for $H^2_f$ follows from Proposition \ref{prop:h2_syn_blochkatoselmer}, thus completing our proof.
\end{proof}

\begin{rem}\label{rem:syntomic_to_blochkato_af+}
	Let us consider the following diagram of complexes:
	\begin{equation}\label{eq:syntomic_to_blochkato_af+}
		\begin{tikzcd}[column sep=large]
			\mu \Fil^{-1} N \arrow[r, "(\gamma-1 {,} 1-\varphi)"] \arrow[d] & \mu\Fil^{-1} N \oplus \mu N \arrow[r, "(1-\varphi {,} \gamma-1)^{\intercal}"] \arrow[d] &[5mm] \mu N \arrow[d, equal]\\
			\Fil^0 N \arrow[r, "(\gamma-1 {,} 1-\varphi)"] \arrow[d] & \mu\Fil^{-1} N \oplus N \arrow[r, "(1-\varphi {,} \gamma-1)^{\intercal}"] \arrow[d] & \mu N \\
			\Fil^0 (N/\mu N) \arrow[r, "1-\varphi"] & N/\mu N,
		\end{tikzcd}
	\end{equation}
	where the vertical arrows are natural maps, and the complex in the middle row is isomorphic to the complex $\pazs^{\bullet}(N)$ in \eqref{eq:syntomic_complex_af+}.
	Let $V := \TF(N)[1/p]$ from Theorem \ref{thm:crystalline_wach_af+_equivalence}.
	Then, after inverting $p$ and using Theorem \ref{thm:qdeformation_dcrys}, we see that the complex in the bottom row of \eqref{eq:syntomic_to_blochkato_af+} is the same as the complex $\cald^{\bullet}(\Dcrys(V))$ in \eqref{eq:dcrys_complex_bks}.
	Moreover, note that in \eqref{eq:syntomic_to_blochkato_af+}, the middle column is exact by Theorem \ref{thm:qdeformation_dcrys}, and the left-hand side column is exact by \eqref{eq:fil_nmodmu_ker} in Lemma \ref{lem:fil_nmodmu_ker}.
	Therefore, by Theorem \ref{thm:syntomic_blochkatoselmer_af+} and Corollary \ref{cor:dcrys_complex_bks}, it follows that the diagram \eqref{eq:syntomic_to_blochkato_af+} induces a natural quasi-isomorphism of complexes $\pazs^{\bullet}(N)[1/p] \simeq \cald^{\bullet}(\Dcrys(V))$.
\end{rem}

\begin{rem}\label{rem:integral_bks}
	In \cite[Equation (3.7.3)]{bloch-kato}, Bloch and Kato also defined an integral version of the Bloch--Kato Selmer group.
	Namely, if $T$ is a finite free $\ZZ_p\textrm{-representation}$ of $G_F$, then using the natural map $\iota \colon H^1(G_F, T) \rightarrow H^1(G_F, T[1/p])$, they set $H^i_f(G_F, T) := \iota^{-1}(H^i_f(G_F, T[1/p]))$.
	Now, assume that $T[1/p]$ is crystalline, and let $N := \NF(T)$ denote the Wach module over $\AF^+$ associated to $T$.
	Then, in light of Theorem \ref{thm:syntomic_blochkatoselmer_af+}, it is natural to ask whether $H^i(\pazs^{\bullet}(N))$ computes $H^i_f(G_F, T)$.

	The answer to the preceding question does not appear to be positive for $i=1$ because the Wach module functor does not preserve short exact sequences (see Remark \ref{rem:crystalline_wach_equivalence_rational}).
	One could possibly remedy this shortcoming by enlarging the category of Wach modules to include ``torsion'' Wach modules.
	In this direction, we expect that ideas appearing in \cite{bhatt-lurie-fgauges} could provide useful insights in gaining a better understanding of integral structures.
	On the other hand, note that the quasi-inverse functor $T_F$ from Theorem \ref{thm:crystalline_wach_af+_equivalence} preserves short exact sequences.
	Indeed, the functor $T_F$ naturally factors through $(\varphi, \Gamma_F)\Mod_{\AF^+}^{[p]_q} \rightarrow (\varphi, \Gamma_F)\Mod_{\AF}^{\etale} \isomorphic \Rep_{\ZZ_p}^{\crys}(G_F)$, where the left functor is exact (see Remark \ref{rem:wachtoetale_af+}), and the right equivalence is also exact (see Section \ref{subsec:padicreps}).
	So it is reasonable to expect that $H^i(\pazs^{\bullet}(N))$ computes a subspace of $H^i_f(G_F, T)$.
\end{rem}

In the rest of this section, we will compute the cohomology of the complex $\pazs^{\bullet}(N)$ from \eqref{eq:syntomic_complex_af+}.

\subsubsection{Comparing \texorpdfstring{$H^0$}{-} and \texorpdfstring{$H^1$}{-}}\label{subsubsec:integral_comp_h0h1}

In this section, we shall compute $H^0$ and $H^1$ of the complex $\pazs^{\bullet}(N)$. 
\begin{lem}\label{lem:h0_syn_blochkatoselmer}
	Let $N$ be a Wach module over $\AF^+$ and $T = \TF(N)$ the associated $\ZZ_p\representation$ of $G_F$ from Theorem \ref{thm:crystalline_wach_af+_equivalence} such that $T[1/p]$ is crystalline.
	Then, we have a natural isomorphism
	\begin{equation*}
		H^0(\pazs^{\bullet}(N)) = (\Fil^0 N)^{\varphi=1, \nabla_q=0} \isomorphic T^{G_F}.
	\end{equation*}
\end{lem}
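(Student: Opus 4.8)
The statement has two parts: the equality $H^0(\pazs^{\bullet}(N)) = (\Fil^0 N)^{\varphi=1,\ \nabla_q=0}$, which is immediate, and the isomorphism with $T^{G_F}$, which carries the content; the plan is to reduce the latter to computing $\Hom(\ZZ_p, T)$ through the Wach module equivalence. First, from the shape of the complex \eqref{eq:syntomic_complex_af+} one reads off $H^0(\pazs^{\bullet}(N)) = \ker\big((\nabla_q, 1-\varphi)\colon \Fil^0 N \to \Fil^{-1}N\oplus N\big) = (\Fil^0 N)^{\varphi=1,\ \nabla_q=0}$. Since $\Gamma_F$ acts trivially on $N/\mu N$ we have $(\gamma-1)N \subset \mu N$, so $\nabla_q = \tfrac{\gamma-1}{\mu}\colon N\to N$ is well defined, and as $N$ is $\mu$-torsion free, $\nabla_q(x)=0$ holds if and only if $\gamma(x)=x$, equivalently (since $\gamma$ topologically generates $\Gamma_0$ and acts continuously) $x \in N^{\Gamma_0=1}$; moreover $\varphi(x)=x$ forces $x \in \Fil^0 N$. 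Hence $H^0(\pazs^{\bullet}(N)) = N^{\varphi=1,\ \Gamma_0=1}$.

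The crucial observation is that, on a Wach module, $\Gamma_0$-invariance already forces $\Gamma_F$-invariance. I would prove that the composite $N^{\Gamma_0=1}\hookrightarrow N\twoheadrightarrow N/\mu N$ is injective, i.e.\ $(\mu N)^{\gamma=1}=0$: if $x = \mu y \in \mu N$ with $\gamma(x)=x$, then $\tfrac{\gamma(\mu)}{\mu}\gamma(y)=y$ in $N$; since $\gamma$ acts trivially on $N/\mu N$ and $\tfrac{\gamma(\mu)}{\mu}\equiv\chi(\gamma)\bmod\mu$, reducing modulo $\mu$ gives $(\chi(\gamma)-1)\bar y = 0$ in $N/\mu N$, and since $N/\mu N$ is $p$-torsion free and $\chi(\gamma)-1 \in p\ZZ_p\setminus\{0\}$ this forces $\bar y = 0$, i.e.\ $y \in \mu N$ and $x \in \mu^2 N$; iterating (at the $n$-th stage one divides by $\mu^n$ and uses that $\chi(\gamma)^n - 1$ is again a non-zero-divisor on $N/\mu N$) gives $x \in \bigcap_n\mu^n N = 0$. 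The resulting injection $N^{\Gamma_0=1}\hookrightarrow N/\mu N$ is $\Gamma_F$-equivariant with $\Gamma_F$-trivial target, so $\Gamma_F$ acts trivially on $N^{\Gamma_0=1}$, and therefore $H^0(\pazs^{\bullet}(N)) = N^{\varphi=1,\ \Gamma_0=1} = N^{\varphi=1,\ \Gamma_F=1}$.

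It remains to identify $N^{\varphi=1,\ \Gamma_F=1}$ with $T^{G_F}$. Under the equivalence of Theorem \ref{thm:crystalline_wach_af+_equivalence}, the trivial representation $\ZZ_p$ corresponds to the unit Wach module $\AF^+$ (indeed $\TF(\AF^+) = W(\CC_p^{\flat})^{\varphi=1} = \ZZ_p$), so by full faithfulness $T^{G_F} = \Hom_{\Rep_{\ZZ_p}^{\crys}(G_F)}(\ZZ_p, T) \isomorphic \Hom_{(\varphi, \Gamma_F)\Mod_{\AF^+}^{[p]_q}}(\AF^+, N)$. An $\AF^+$-linear map $\AF^+ \to N$ has the form $a \mapsto a\,x$ with $x \in N$, and — since $\varphi$ and every $g \in \Gamma_F$ fix $1 \in \AF^+$ — it is $\varphi$-equivariant precisely when $\varphi(x)=x$ and $\Gamma_F$-equivariant precisely when $x \in N^{\Gamma_F=1}$; thus this Hom-group equals $N^{\varphi=1,\ \Gamma_F=1}$, which together with the previous step yields the desired natural isomorphism $H^0(\pazs^{\bullet}(N)) \isomorphic T^{G_F}$. (Equivalently, one may identify $N^{\varphi=1,\Gamma_F=1}$ with $M^{\varphi=1,\Gamma_0=1}$ for the descended Wach module $M = N^{\FF_p^{\times}}$ over $S$ and invoke Theorem \ref{thm:crystalline_wach_s_equivalence} in place of Theorem \ref{thm:crystalline_wach_af+_equivalence}.)

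The one step that is not bookkeeping is the vanishing $(\mu N)^{\gamma=1}=0$ in the second paragraph: the point is that, although only $\gamma$ (not $\varphi$) is visible in $\nabla_q$, the fact that $\gamma$ acts trivially modulo $\mu$ while $\chi(\gamma)-1$ is a non-zero-divisor forces $\gamma$-invariant vectors to inject into $N/\mu N$ and hence to be $\Gamma_F$-invariant. After that, the identification with $T^{G_F}$ is a purely formal consequence of the categorical equivalence recalled above.
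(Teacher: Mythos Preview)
Your proof is correct and takes a genuinely different route from the paper in two places. For the passage from $\Gamma_0$-invariance to $\Gamma_F$-invariance, the paper invokes the descent $N \cong \AF^+ \otimes_S M$ with $M = N^{\FF_p^{\times}}$ (Proposition~\ref{prop:wachmod_fpx_descent}) and reads off $N^{\varphi=1,\gamma=1} = M^{\varphi=1,\gamma=1} = M^{\varphi=1,\Gamma_0} = N^{\varphi=1,\Gamma_F}$; your argument via $(\mu N)^{\gamma=1}=0$ and the resulting injection $N^{\Gamma_0=1}\hookrightarrow N/\mu N$ is more elementary and self-contained, and in fact the same $\chi(\gamma)^k-1$ divisibility trick appears in the paper, though for the different purpose of showing $N^{\varphi=1,\gamma=1}=(N[1/\mu])^{\varphi=1,\gamma=1}$. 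For the identification with $T^{G_F}$, the paper uses Berger's explicit comparison isomorphism $A^+[1/\mu]\otimes_{\AF^+} N \cong A^+[1/\mu]\otimes_{\ZZ_p} T$ (Remark~\ref{rem:wachmod_rep_comp}) and takes $(\varphi,G_F)$-invariants, whereas you appeal only to full faithfulness of the equivalence in Theorem~\ref{thm:crystalline_wach_af+_equivalence} to get $T^{G_F}=\Hom(\ZZ_p,T)\cong\Hom(\AF^+,N)=N^{\varphi=1,\Gamma_F=1}$. Your approach is cleaner and avoids importing the auxiliary ring $A^+$; the paper's approach, on the other hand, makes the isomorphism concrete and is closer in spirit to the comparison-isomorphism philosophy used elsewhere in the article.
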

\begin{proof}
	Note that a simple computation shows that we have $(\Fil^0 N)^{\varphi=1, \nabla_q=0} = (\Fil^0 N)^{\varphi=1, \gamma=1} = N^{\varphi=1, \gamma=1}$.
	Now, let $M := N^{\FF_p^{\times}}$, and recall that we have an $\AF^+\linear$ and $(\varphi, \Gamma_F)\equivariant$ isomorphism $\AF^+ \otimes_S M \isomorphic N$ (see Proposition \ref{prop:wachmod_fpx_descent}).
	Therefore, we see that $N^{\varphi=1, \gamma=1} = (\AF^+ \otimes_S M)^{\varphi=1, \gamma=1} = M^{\varphi = 1, \gamma=1} = M^{\varphi = 1, \Gamma_0} = N^{\varphi = 1, \Gamma_F}$, where the third equality follows by the continuity of the action of $\Gamma_0$ on $M$.
	Moreover, note that $(\AF^+[1/\mu])^{\gamma=1} = O_F$, therefore, a similar argument shows that we have $(N[1/\mu])^{\varphi=1, \gamma=1} = (N[1/\mu])^{\varphi = 1, \Gamma_F}$.
	We claim that $N^{\varphi=1, \gamma=1} = (N[1/\mu])^{\varphi=1, \gamma=1}$.
	Indeed, let $(x/\mu^k)$ be in $N[1/\mu]^{\varphi=1, \gamma=1}$, for some $x$ in $N$ and $k \in \ZZ$.
	Then, it is enough to show that $x$ is in $\mu^k N$.
	Note that $\gamma$ is a topological generator of $\Gamma_0$, and we have $\gamma(x) = (\gamma(\mu)^k/\mu^k)x$.
	So, reduction modulo $\mu$ gives $\gamma(x) = \chi(\gamma)^k x \textrm{ mod } \mu N$.
	Since $\Gamma_F$ acts trivially on $N/\mu N$ and $\chi(\gamma)^k-1$ is a unit in $\AF^+[1/p]$, we obtain that $x$ is in $\mu N[1/p] \cap N = \mu N$.
	Iterating this $k$ times, we obtain that $x$ is in $\mu^k N$, as claimed.
	In particular, we have that $N^{\varphi = 1, \Gamma_F} = (N[1/\mu])^{\varphi = 1, \Gamma_F}$, and it remains to compute the latter term.

	From Proposition \ref{prop:wachmod_rep_comp}, recall that for a certain ring $A^+$ equipped with a natural action of $(\varphi, G_F)$, we have an $A^+[1/\mu]\linear$ and $(\varphi, G_F)\equivariant$ comparison isomorphism $A^+[1/\mu] \otimes_{\AF^+} N \isomorphic A^+[1/\mu] \otimes_{\ZZ_p} T$.
	Moreover, we have that $(A^+)^{H_F} = \AF^+$ and $(A^+[1/\mu])^{\varphi=1} = \ZZ_p$, and the action of $\varphi$ and $G_F$ commute with each other, therefore, by taking the fixed points of the preceding isomorphism under the action of $\varphi$ and $G_F$, yields
	\begin{equation*}
		(N[1/\mu])^{\varphi=1, \Gamma_F} = (A^+[1/\mu] \otimes_{\AF^+} N)^{\varphi = 1, G_F} \isomorphic (A^+[1/\mu] \otimes_{\ZZ_p} T)^{\varphi = 1, G_F} = T^{G_F}.
	\end{equation*}
	This allows us to conclude.
\end{proof}

\begin{nota}
	Let $N$ and $N'$ be two Wach modules over $\AF^+$.
	We say that a Wach module $E$ over $\AF^+$ is an extension of $N$ by $N'$ if it fits into a $(\varphi, \Gamma_F)\equivariant$ exact sequence of $\AF^+\textrm{-modules}$, $0 \rightarrow N' \rightarrow E_1 \rightarrow N \rightarrow 0$.
	Two extensions $E_1$ and $E_2$ are equivalent if there exists a $(\varphi, \Gamma_F)\equivariant$ commutative diagram of $\AF^+\textrm{-modules}$ with exact rows:
	\begin{center}
		\begin{tikzcd}
			0 \arrow[r] & N' \arrow[r] \arrow[d, equal] & E_1 \arrow[r] \arrow[d, "\wr"] & N \arrow[r] \arrow[d, equal] & 0\\
			0 \arrow[r] & N' \arrow[r] & E_2 \arrow[r] & N \arrow[r] & 0.
		\end{tikzcd}
	\end{center}
	Let $\Ext^1_{(\varphi, \Gamma_F)\Mod_{\AF^+}^{[p]_q}}(N, N')$ denote the set of equivalence classes of extensions of $N$ by $N'$.
\end{nota}

\begin{prop}\label{prop:h1syn_ext1wach_af+}
	Let $N$ be a Wach module over $\AF^+$.
	Then, we have a natural bijection of sets
	\begin{equation}\label{eq:h1syn_ext1wach_af+}
		H^1(\pazs^{\bullet}(N)) \isomorphic \Ext^1_{(\varphi, \Gamma_F)\Mod_{\AF^+}^{[p]_q}}(\AF^+, N).
	\end{equation}
\end{prop}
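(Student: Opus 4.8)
The plan is to interpret the first-order part of the syntomic complex $\pazs^{\bullet}(N)$ as an explicit resolution computing Yoneda $\Ext^1$ in the category $(\varphi, \Gamma_F)\Mod_{\AF^+}^{[p]_q}$, in the usual style of Herr-type complexes. Given a $1$-cocycle, i.e.\ a pair $(x, y) \in \Fil^{-1} N \oplus N$ with $(1-[p]_q\varphi)x = \nabla_q(y)$, I would build an extension $0 \to N \to E \to \AF^+ \to 0$ by setting $E := N \oplus \AF^+ e$ as an $\AF^+\module$, and declaring $\varphi(e) := e + (1 \otimes \varphi \text{-correction})$ and $(\gamma-1)(e) := \mu y$ (using $\nabla_q = (\gamma-1)/\mu$), with the $[p]_q$-twist in the Frobenius on $E$ dictated by $x$ via the identification $\mu \Fil^{-1} N = \Fil^0 N \cap \mu N$ of Lemma \ref{lem:fil_nmodmu_ker} and Remark \ref{rem:gamma_minus1_image}. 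The cocycle condition is exactly what makes the two operators on $E$ commute after inverting $\mu$, so that $E$ is again a Wach module over $\AF^+$ (one checks conditions (1) and (2) of Definition \ref{defi:wachmod_af+}: triviality of $\Gamma_F$ on $E/\mu E$ is automatic since $(\gamma-1)e \in \mu N$, and the finite-height Frobenius condition for $E$ follows from that of $N$ via Remark \ref{rem:finiteheight_equiv}). Conversely, given an extension $E$, choosing any $\AF^+\module$-splitting $e \mapsto E$ lifting $1 \in \AF^+$ produces a pair $(x,y)$ measuring the failure of $e$ to be $\varphi$- and $\gamma$-fixed, landing in $\Fil^{-1}N \oplus N$ precisely by Remark \ref{rem:gamma_minus1_image} and the definition of the Nygaard filtration; changing the splitting by an element $n \in N$ changes $(x,y)$ by $((1-\varphi)n$ reinterpreted in $\Fil^{-1}N$, $(\gamma-1)n)$, which is exactly a $1$-coboundary $d^0(n) = (\nabla_q(n), (1-\varphi)n)$ — here one must again use $\nabla_q(\Fil^0 N) \subset \Fil^{-1}N$ and the left exactness of $\Fil$ (Lemma \ref{lem:fil_left_exact}). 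Naturality in $N$ is clear from the construction since every map is built canonically out of $\varphi$, $\gamma$, $\mu$ and the filtration.

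In more detail, the main steps I would carry out in order are: (i) set up the dictionary between $1$-cocycles of $\pazs^{\bullet}(N)$ and pairs $(\varphi\text{-cocycle}, \gamma\text{-cocycle})$ satisfying a compatibility — this requires carefully matching the twisted Frobenius $1-[p]_q\varphi$ on $\Fil^{-1}N$ with the untwisted $1-\varphi$ on $N$ under the isomorphism $\Fil^{-1}N \xrightarrow{\mu} \mu\Fil^{-1}N = \Fil^0 N \cap \mu N$, cf.\ the reformulation in the middle row of diagram \eqref{eq:syntomic_to_blochkato_af+}; (ii) given a cocycle, construct $E$ and verify it is a Wach module over $\AF^+$, the only nontrivial point being condition (2), which I would get from Remark \ref{rem:finiteheight_equiv} by observing $\varphi_E$ is an extension of $\varphi_N$ by an isomorphism after inverting $[p]_q$; (iii) given $E$, produce a cocycle from a set-theoretic splitting and check well-definedness modulo coboundaries; (iv) check the two constructions are mutually inverse and that Baer sum of extensions matches addition of cocycle classes, giving a group isomorphism; (v) check naturality. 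One subtlety is that the middle term of $\pazs^{\bullet}(N)$ is $\Fil^{-1}N \oplus N$ rather than $N \oplus N$: this asymmetry encodes exactly that an extension of effective-type objects need not split the Nygaard filtration integrally, and the filtration bookkeeping — which filtered piece each component of $(x,y)$ lives in, and the shift by $\mu$ — is where care is needed.

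The step I expect to be the main obstacle is (ii)–(iii), specifically verifying that the Wach module structure one writes down on $E$ genuinely satisfies the finite-height condition of Definition \ref{defi:wachmod_af+}(2) integrally (not merely after inverting $\mu$), and dually that an arbitrary splitting of a given $E$ lands in the correct Nygaard-filtered pieces $\Fil^{-1}N$ and $N$. For the former, since the extension sits inside $0 \to N \to E \to \AF^+ \to 0$ with both ends Wach modules, finite generation and freeness of $E$ over $\AF^+$ are clear, and the cokernel of $1\otimes\varphi$ on $E$ is controlled by the snake lemma applied to the map of short exact sequences relating $\varphi^*E$ and $E$ — so the cokernel of $(1\otimes\varphi_E)$ is an extension of $\cokert(1\otimes\varphi_{\AF^+}) = 0$ by $\cokert(1\otimes\varphi_N)$, hence killed by the same power of $[p]_q$ as for $N$. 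For the latter, the key input is Remark \ref{rem:gamma_minus1_image} together with Lemma \ref{lem:fil_nmodmu_ker}: if $e$ is a lift of $1$, then $(1-\varphi)e \in \mu N$ forces (after dividing by $\mu$ and twisting) the corresponding element into $\Fil^{-1}N$, and $(\gamma-1)e \in \mu N$ automatically. Once these two filtration facts are in place, the rest is the standard (if slightly tedious) verification that one has a functorial isomorphism of groups $H^1(\pazs^{\bullet}(N)) \isomorphic \Ext^1_{(\varphi, \Gamma_F)\Mod_{\AF^+}^{[p]_q}}(\AF^+, N)$.
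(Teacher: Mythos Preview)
Your overall strategy is exactly the paper's: build mutually inverse maps between $1$-cocycle classes of $\pazs^{\bullet}(N)$ and extension classes of $\AF^+$ by $N$ in $(\varphi,\Gamma_F)\Mod_{\AF^+}^{[p]_q}$, via $E = N \oplus \AF^+ e$ with $\varphi$ and $\gamma$ acting on $e$ by the cocycle data. However, you have the bookkeeping of the two components reversed, and this causes the later ``key subtlety'' paragraph to misidentify where the filtration argument is needed.

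In the cocycle $(x,y) \in \Fil^{-1}N \oplus N$ with $(1-[p]_q\varphi)x = \nabla_q(y)$, it is $x$ that encodes the $\gamma$-action and $y$ the $\varphi$-action: the paper sets $\gamma(e) = e + \mu x$, $\varphi(e) = e + y$, and $g(e) = e$ for a generator $g$ of $\Gamma_{\textrm{tor}}$ (you do not mention this last point, but it is needed to get a $\Gamma_F$-action, not merely a $\Gamma_0$-action, on $E$). Going the other way, if $e \in E$ lifts $1 \in \AF^+$, then $(1-\varphi)e = y$ lies in $N$ \emph{but not} in $\mu N$ in general --- your claim ``$(1-\varphi)e \in \mu N$'' is false. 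The nontrivial filtration step is the other one: since $\varphi(e) = e - y \in E$, one has $e \in \Fil^0 E$; then $(\gamma-1)e \in \mu\Fil^{-1}E$ by Remark \ref{rem:gamma_minus1_image}, and intersecting with $N$ one needs $N \cap \mu\Fil^{-1}E = \mu\Fil^{-1}N$, which is exactly Remark \ref{rem:mu_fil_intersect} (built on Lemma \ref{lem:fil_left_exact}), not Lemma \ref{lem:fil_nmodmu_ker}. Likewise, two lifts of $1$ differ by an element of $N \cap \Fil^0 E = \Fil^0 N$ (again Lemma \ref{lem:fil_left_exact}), not by an arbitrary $n \in N$, which is what makes the coboundary land in the domain $\Fil^0 N$ of $d^0$. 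Once you correct the swap and invoke Remark \ref{rem:mu_fil_intersect} and Lemma \ref{lem:fil_left_exact} at those two places, the argument goes through as you describe.
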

\begin{proof}
	To prove the claim, we shall construct a map
	\begin{equation*}
		\alpha \colon H^1(\pazs^{\bullet}(N)) \longrightarrow \Ext^1_{(\varphi, \Gamma_F)\Mod_{\AF^+}^{[p]_q}}(\AF^+, N),
	\end{equation*}
	and show that it is bijective by constructing an inverse map.
	Let $(x, y)$ represent a class in $H^1(\pazs^{\bullet}(N))$, i.e.\ we have $x$ in $\Fil^{-1} N$ and $y$ in $N$ such that $(1-[p]_q\varphi)x = \nabla_q(y)$.
	Set $E_1 := N \oplus \AF^+ \cdot e$ with $\gamma(e) = \mu x + e$, $\varphi(e) = y + e$ and $g(e) = e$, for $g$ a generator of $\Gamma_{\textrm{tor}} \isomorphic \FF_p^{\times}$.
	Clearly, $E_1$ is a Wach module over $\AF^+$.
	Moreover, by sending $e$ to the identity element in $\AF^+$, we obtain an exact sequence of Wach modules over $\AF^+$,
	\begin{equation*}
		0 \longrightarrow N \longrightarrow E_1 \longrightarrow \AF^+ \longrightarrow 0,
	\end{equation*}
	This represents an extension class of $\AF^+$ by $N$, and we set $\alpha[(x, y)] = [E_1]$, where we represent cohomological (resp.\ extension) classes with ``$[\hspace{0.5mm}]$''.
	To show that $\alpha$ is well-defined we need to show that the extension class $[E_1]$ is independent of the choice of the presentation $(x, y)$.
	Indeed, let $(x', y')$ be another presentation such that $x'-x = \nabla_q(w)$, $y'-y = (1-\varphi)w$ for some $w$ in $\Fil^0 N$.
	Then, similar to above note that $E_2 :=  N \oplus \AF^+ \cdot e'$, with $\gamma(e') = \mu x' + e'$, $g(e') = e'$ and $\varphi(e') = y' + e'$, is a Wach module over $\AF^+$ and an extension of $\AF^+$ by $N$.
	Let us define an $\AF^+\linear$ map $f \colon E_1 \rightarrow E_2$ given as identity on $N$, and we set $f(e) = e'-w$.
	Then, $f$ is bijective because we may define $f^{-1} \colon E_2 \rightarrow E_1$, given as the identity on $N$, and by setting $f^{-1}(e') = e+w$ it is easy to verify that $f \circ f^{-1} = id$ and $f^{-1} \circ f = id$.
	From the formulas $x'-x = \nabla_q(w)$ and $y'-y = (1-\varphi)y$ it follows that $f$ and $f^{-1}$ are $(\varphi, \Gamma_F)\equivariant$.
	Now, consider the following diagram with $\AF^+\linear$ maps and exact rows:
	\begin{center}
		\begin{tikzcd}
			0 \arrow[r] & N \arrow[r] \arrow[d, equal] & E_1 \arrow[r] \arrow[d, "f"', "\wr"] & \AF^+ \arrow[r] \arrow[d, equal] & 0\\
			0 \arrow[r] & N \arrow[r] & E_2 \arrow[r] & \AF^+ \arrow[r] & 0.
		\end{tikzcd}
	\end{center}
	The left square commutes by the definition of $f$.
	Moreover, the $\AF^+\linear$ map $E_1 \rightarrow \AF^+$ sends $e \mapsto 1$, and the $\AF^+\linear$ map $E_2 \rightarrow \AF^+$ sends $e' \mapsto 1$, therefore, it follows that the right square commutes as well.
	Hence, $E_1$ and $E_2$ represent the same extension class of $\AF^+$ by $N$, in the category $(\varphi, \Gamma_F)\Mod_{\AF^+}^{[p]_q}$.
	In particular, $\alpha$ is a well-defined map.

	Next, let us construct an inverse of $\alpha$ which we will denote by $\beta$.
	Consider an extension of Wach modules over $\AF^+$ as
	\begin{equation*}
		0 \longrightarrow N \longrightarrow E_1 \longrightarrow \AF^+ \longrightarrow 0.
	\end{equation*}
	We write $E_1 = N \oplus \AF^+ \cdot e$, where $e$ in $E_1$ is a lift of the identity element in $\AF^+$, and we have $(\gamma-1) e = z$ and $(1-\varphi) e = y$ for some $y$, $z$ in $N$.
	But, then we have that $\varphi(e) = e-y$ is in $E_1$, i.e.\ $e$ is in $\Fil^0 E_1$.
	Therefore, we get that $z = (\gamma-1)e$ is in $N \cap \mu \Fil^{-1} E_1 = \mu \Fil^{-1} N$, where the last equality follows from Remark \ref{rem:mu_fil_intersect}.
	In particular, we obtain that $\nabla_q(e) = \frac{\gamma-1}{\mu} e = x$, for some $x$ in $\Fil^{-1} N$.
	By the commutativity of the action of $\varphi$ and $\gamma$, we get that $(1-[p]_q\varphi) \circ \nabla_q(e) = \nabla_q \circ (1-\varphi)e$, or equivalently
	\begin{equation*}
		(1-[p]_q\varphi)x = \nabla_q(y).
	\end{equation*}
	Therefore, we see that $(x, y)$ represents a cohomological class in $H^1(\pazs^{\bullet}(N))$, and we set $\beta([E_1]) = [(x, y)]$.
	Let us first show that the class $[(x, y)]$ is independent of the lift $e$ in $E_1$ of the identity element in $\AF^+$.
	So, let $e'$ in $E$ denote another lift of the identity element in $\AF^+$.
	Then, similar to above we have that $e'$ is in $\Fil^0 E$, and there exist $x'$ in $\Fil^{-1} N$ and $y'$ in $N$ such that $\nabla_q(e') = x'$, $(1-\varphi)e' = y'$ and $(1-[p]_q\varphi)x' = \nabla_q(y')$.
	Moreover, from Lemma \ref{lem:fil_left_exact}, we note that $w = e' - e$ is in $\Fil^0 E \cap N = \Fil^0 N$, in particular, we get that $x' = x + \nabla_q(w)$ and $y' = y + (1-\varphi)w$.
	Since $(1-[p]_q\varphi) \circ \nabla_q = \nabla_q \circ (1-\varphi)$, therefore, we conclude that $(x, y)$ and $(x', y')$ represent the same class in $H^1(\pazs^{\bullet}(N))$.
	Now, to show that $\beta$ is well-defined, we must show that the class $[(x, y)]$ is independent of the presentation $E_1$ of the extension class $[E_1]$.
	So let $E_2$ denote another presentation of the extension class $[E_1]$, i.e.\ $E_2$ is a Wach module over $\AF^+$, and there exists a $(\varphi, \Gamma_F)\equivariant$ isomorphism $f \colon E_1 \isomorphic E_2$ fitting into the following $(\varphi, \Gamma_F)\equivariant$ commutative diagram of $\AF^+\textrm{-modules}$ with exact rows:
	\begin{center}
		\begin{tikzcd}
			0 \arrow[r] & N \arrow[r] \arrow[d, equal] & E_1 \arrow[r] \arrow[d, "f"', "\wr"] & \AF^+ \arrow[r] \arrow[d, equal] & 0\\
			0 \arrow[r] & N \arrow[r] & E_2 \arrow[r] & \AF^+ \arrow[r] & 0.
		\end{tikzcd}
	\end{center}
	Let $e''$ in $E_2$ denote a lift of the identity element in $\AF^+$, and arguing as above we have that $e''$ is in $\Fil^0 E_2$, and there exist some $x''$ in $\Fil^{-1} N$ and $y''$ in $N$ such that $\nabla_q(e'') = x''$, $(1-\varphi)e'' = y''$ and $(1-[p]_q\varphi)x'' = \nabla_q(y'')$.
	Then, from the commutative diagram above we have that $f^{-1}(e'')$ in $E_1$ denotes a lift of the identity element in $\AF^+$, therefore, it follows that $f^{-1}(e'')$ is in $\Fil^0 E_1$, and $\nabla_q(f^{-1}(e'')) = x''$, $(1-\varphi)f^{-1}(e'') = y''$ and $(1-[p]_q\varphi)f^{-1}(x'') = \nabla_q(f^{-1}(y''))$.
	Using that the extension class $[E_1]$ is independent of the choice of a lift in $E_1$ of the identity element in $\AF^+$, it follows that $(x, y)$ and $(x'', y'')$ represent the same cohomological class in $H^1(\pazs^{\bullet}(N))$.
	In particular, we obtain that $\beta$ is a well-defined map.
	
	Finally, it remains to show that the two constructions described above are inverse to each other, i.e.\ $\alpha \circ \beta = id$ and $\beta \circ \alpha = id$.
	Note that starting with a class $[(x, y)]$ in $H^1(\pazs^{\bullet}(N))$ we may construct an extension $E$ of $\AF^+$ by $N$ in $(\varphi, \Gamma_F)\Mod_{\AF^+}^{[p]_q}$, such that $[E] = \alpha[(x, y)]$, i.e.\ $E$ may be described using the pair $(x, y)$.
	After applying $\beta$ we obtain a class $\beta([E]) = [(x', y')]$ in $H^1(\pazs^{\bullet}(N))$ with a presentation $(x', y')$ depending on the choice of some lift in $E$ of the identity element in $\AF^+$.
	Note that by construction, $E$ admits two descriptions using $(x, y)$ and $(x', y')$, respectively, depending on the choice of the lift in $E$ of the identity element in $\AF^+$.
	As we have shown that the class $[E]$ is independent of this choice, therefore, it follows that $[(x, y)] = [(x', y')] = \beta \circ \alpha [(x, y)]$ in $H^1(\pazs^{\bullet}(N))$.
	Next, starting with an extension $E$ of $\AF^+$ by $N$ in $(\varphi, \Gamma_F)\Mod_{\AF^+}^{[p]_q}$, we may construct a class $[(x, y)] = \beta([E])$ in $H^1(\pazs^{\bullet}(N))$.
	After applying $\alpha$, we obtain an extension class $[E'] = \alpha[(x, y)]$, where $E'$ is an extension of $\AF^+$ by $N$ in $(\varphi, \Gamma_F)\Mod_{\AF^+}^{[p]_q}$.
	By construction, we may write $E = N \oplus \AF^+ \cdot e$, with $\nabla_q(e) = x$, $g(e) = e$ and $(1-\varphi)e = y$, and $E' = N \oplus \AF^+ \cdot e'$ with $\nabla_q(e') = x$, $g(e) = e$ and $(1-\varphi)e' = y$.
	Now, note that the $\AF^+\linear$ map $f \colon E \rightarrow E'$, defined using the identity on $N$, and by setting $f(e)= e'$, is a $(\varphi, \Gamma_F)\equivariant$ isomorphism, in particular, we have that $[E] = [E'] = \alpha \circ \beta ([E])$.
	Hence, we have shown that the map $\alpha$ is a natural bijection.
\end{proof}

\begin{cor}\label{cor:h1syn_ext1wach_bf+}
	Let $N$ be a Wach module over $\AF^+$, and set $\BF^+ := \AF^+[1/p]$.
	Then, we have a natural isomorphism of abelian groups:
	\begin{equation}\label{eq:h1syn_ext1wach_bf+}
		H^1(\pazs^{\bullet}(N))[1/p] = H^1(\pazs^{\bullet}(N)[1/p]) \isomorphic \Ext^1_{(\varphi, \Gamma_F)\Mod_{\BF^+}^{[p]_q}}(\BF^+, N[1/p]).
	\end{equation}
\end{cor}
\begin{proof}
	The equality in \eqref{eq:h1syn_ext1wach_bf+} follows by inverting $p$ in the complex $\pazs^{\bullet}(N)$.
	Moreover, arguing similar to the proof of Proposition \ref{prop:h1syn_ext1wach_af+} (after inverting $p$), and noting that $(\Fil^k N[1/p]) = (\Fil^k N)[1/p]$ (see Section \ref{subsubsec:nyfil_wachmod_af+}), we see that there exists a natural bijection of sets
	\begin{equation*}
		\alpha \colon H^1(\pazs^{\bullet}(N)[1/p]) \isomorphic \Ext^1_{(\varphi, \Gamma_F)\Mod_{\BF^+}^{[p]_q}}(\BF^+, N[1/p]),
	\end{equation*}
	with an inverse $\beta$ given by a construction similar to the one described in the proof of Proposition \ref{prop:h1syn_ext1wach_af+}.
	It remains to show that $\alpha$ and $\beta$ are additive.

	To this end, let us first explicitly describe the addition structure on the objects involved in the maps $\alpha$ and $\beta$.
	Let $[(x, y)]$ and $[(x', y')]$ denote two classes in $H^1(\pazs^{\bullet}(N)[1/p])$, i.e.\ we have $x, x'$ in $\Fil^{-1} N$ and $y, y'$ in $N$ such that $(1-[p]_q\varphi)x = \nabla_q(y)$ and $(1-[p]_q\varphi)x' = \nabla_q(y')$.
	Then, we see that $[(x, y)] + [(x', y')] = [(x+x', y+y')]$ in $H^1(\pazs^{\bullet}(N)[1/p])$.

	On the other hand, the abelian group structure on $\Ext^1$ is described using the Baer sum (see \cite[\href{https://stacks.math.columbia.edu/tag/010I}{Tag 010I}]{stacks-project}).
	More precisely, if $[E_1]$ and $[E_2]$ denote two extension classes of $\BF^+$ by $N[1/p]$ in $(\varphi, \Gamma_F)\Mod_{\BF^+}^{[p]_q}$, then the class $[E_1] + [E_2]$ is represented by the extension $E$ in the following $(\varphi, \Gamma_F)\equivariant$ commutative diagram of Wach modules over $\BF^+$ with exact rows:
	\begin{equation}\label{eq:baer_sum}
		\begin{tikzcd}
			0 & {N[1/p] \oplus N[1/p]} & {E_1 \oplus E_2} & {\BF^+ \oplus \BF^+} & 0 \\
			0 & N[1/p] & {E_3} & {\BF^+ \oplus \BF^+} & 0 \\
			0 & N[1/p] & E & {\BF^+} & 0,
			\arrow[from=1-1, to=1-2]
			\arrow[from=1-2, to=1-3]
			\arrow["\Sigma"', from=1-2, to=2-2]
			\arrow[from=1-3, to=1-4]
			\arrow[from=1-3, to=2-3]
			\arrow[from=1-4, to=1-5]
			\arrow[equal, from=1-4, to=2-4]
			\arrow[from=2-1, to=2-2]
			\arrow[from=2-2, to=2-3]
			\arrow[from=2-3, to=2-4]
			\arrow[from=2-4, to=2-5]
			\arrow[from=3-1, to=3-2]
			\arrow[equal, from=3-2, to=2-2]
			\arrow[from=3-2, to=3-3]
			\arrow[from=3-3, to=2-3]
			\arrow[from=3-3, to=3-4]
			\arrow["\Delta"', from=3-4, to=2-4]
			\arrow[from=3-4, to=3-5]
		\end{tikzcd}
	\end{equation}
	where $E_3$ is obtained as pushout by the sum map $\Sigma \colon N[1/p] \oplus N[1/p] \rightarrow N[1/p]$, and $E$ is obtained as pullback by the diagonal map $\Delta \colon \BF^+ \rightarrow \BF^+ \oplus \BF^+$.
	More explicitly, arguing similar to the proof of Proposition \ref{prop:h1syn_ext1wach_af+}, for $i = 1, 2$, we may write $E_i = N[1/p] \oplus \BF^+ \cdot e_i$, where $e_i$ in $E_i$ is a lift of the identity element in $\BF^+$, and we have $(\gamma-1)e_i = \mu x_i$ and $(1-\varphi)e_i = y_i$, for some $x_i$ in $\Fil^{-1} N[1/p]$ and $y_i$ in $N[1/p]$.
	Additionally, by the commutativity of the action of $\varphi$ and $\gamma$, we also have that $(1-[p]_q\varphi)x_i = \nabla_q(y_i)$.
	Then, in diagram \eqref{eq:baer_sum}, we note that $(e_1, e_2)$ denotes an element of $E_1 \oplus E_2$ lifting $(1,1)$ in $\BF^+ \oplus \BF^+$.
	Consequently, for the image of $(e_1, e_2)$ in $E_3$, denoted as $e_3$, we have that $(\gamma-1)e_3 = \mu (x_1+x_2)$, $(1-\varphi)e_3 = y_1+y_2$ and $(1-[p]_q\varphi)(x_1+x_2) = \nabla_q(y_1+y_2)$.
	As we have that $\Delta(1) = (1, 1)$ in $\BF^+ \oplus \BF^+$, we let $e$ in $E$ denote the pullback of $e_3$, and note that $e$ is a lifting of the identity element in $\BF^+$.
	In particular, we may write $E = N[1/p] \oplus \BF^+ \cdot e$, and we have $(\gamma-1)e = \mu(x_1+x_2)$, $(1-\varphi)e = y_1+y_2$ and $(1-[p]_q\varphi)(x_1+x_2) = \nabla_q(y_1+y_2)$.
	Arguing similar to the proof of Proposition \ref{prop:h1syn_ext1wach_af+}, it is easy to see that the class $[E]$ is independent of all the choices involved.

	Now, we are ready to show that $\alpha$ and $\beta$ are additive.
	So, let $[(x, y)]$ and $[(x', y')]$ denote two classes in $H^1(\pazs^{\bullet}(N)[1/p])$, and set $[E] := \alpha[(x,y)]$, $[E'] := \alpha[(x',y')]$ and $[E''] = \alpha[(x+x',y+y')]$.
	Then, using the explicit presentation of $[E]$, $[E']$ and $[E'']$ (similar to the one described in the proof of Proposition \ref{prop:h1syn_ext1wach_af+}), the description of Baer sum from the discussion above, and the independence of these classes from the choice of their respective presentations, it follows that $[E] + [E'] = [E'']$.
	In particular, we get that $\alpha$ is a morphism of abelian groups.

	On the other hand, let $[E_1]$ and $[E_2]$ denote two extension classes of $\BF^+$ by $N[1/p]$ in $(\varphi, \Gamma_F)\Mod_{\BF^+}^{[p]_q}$, as in the previous paragraph.
	Then, we have that $\beta([E_i]) = [(x_i, y_i)]$, for $i = 1, 2$, and $\beta([E]) = [(x_1+x_2, y_1+y_2)]$.
	So, it follows that
	\begin{equation*}
		\beta([E_1] + [E_2]) = \beta([E]) = [(x_1+x_2, y_1+y_2)] = [(x_1, y_1)] + [(x_2, y_2)] = \beta([E_1]) + \beta([E_2]),
	\end{equation*}
	i.e.\ $\beta$ is also a morphism of abelian groups.
	This completes our proof.
\end{proof}

\subsubsection{Comparing \texorpdfstring{$H^2$}{-} rationally}

For convenience in computations in this section, let us rephrase our goal.
Let $V$ be a $\padic$ positive crystalline representation of $G_F$, i.e.\ all its Hodge-Tate weights are less than or equal to $0$, and let $T \subset V$ be a $G_F\textrm{-stable}$ $\ZZ_p\lattice$.
Set $V(r) := V \otimes_{\QQ_p} \QQ_p(r)$ and $T(r) := T \otimes_{\ZZ_p} \ZZ_p(r)$, for any $r \in \ZZ$.
From Theorem \ref{thm:crystalline_wach_af+_equivalence}, recall that we have Wach modules $\NF(T)$ and $\NF(T(r)) = \mu^{-r}\NF(T)(r)$ over $\AF^+$, and $\AF^+[1/p]\modules$ $\NF(V) = \NF(T)[1/p]$ and $\NF(V(r)) = \mu^{-r} \NF(V)(r)$.
Let us denote the complex $\pazs^{\bullet}(\mu^{-r}\NF(T)(r))[1/p]$ by $\pazs^{\bullet}(\NF(V), r)$.
Then, our goal is to show the following claim:
\begin{prop}\label{prop:h2_syn_blochkatoselmer}
	The cohomology group $H^2(\pazs^{\bullet}(\NF(V), r))$ vanishes.
	In particular, we have that $H^2(\pazs^{\bullet}(\NF(V), r)) = H^2_f(G_F, V(r)) = 0$.
\end{prop}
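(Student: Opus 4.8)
Write $N := \NF(T)$, so $\NF(V) = N[1/p]$, and for the fixed $r \in \ZZ$ set $N_r := \mu^{-r}N(r)$; by Remark~\ref{rem:wachmod_cycltwist} this is a Wach module over $\AF^+$ with $\TF(N_r)[1/p] = V(r)$, and $\Fil^k N_r = \mu^{-r}(\Fil^{r+k}N)(r)$ by Lemma~\ref{lem:wachmod_af+_twist_fil}~(1), so $\pazs^\bullet(\NF(V), r) = \pazs^\bullet(N_r)[1/p]$ is defined. Being a three term complex in degrees $0, 1, 2$, its $H^2$ is the cokernel of $(x, y) \mapsto (1-[p]_q\varphi)x - \nabla_q(y)$ on $\Fil^{-1}N_r[1/p] \oplus N_r[1/p]$, and the whole point is to prove
\[
(1-[p]_q\varphi)\,\Fil^{-1}N_r[1/p] \;+\; \nabla_q\big(N_r[1/p]\big) \;=\; N_r[1/p].
\]
The first thing I would do is view $\pazs^\bullet(N_r)$ as the total complex of the bicomplex whose two columns are $R_1 := \big[\Fil^0 N_r \xrightarrow{1-\varphi} N_r\big]$ and $R_2 := \big[\Fil^{-1}N_r \xrightarrow{1-[p]_q\varphi} N_r\big]$ (in degrees $0, 1$) and whose connecting map is $\nabla_q$; the relevant square commutes because $\gamma\varphi = \varphi\gamma$ forces $\nabla_q(1-\varphi) = (1-[p]_q\varphi)\nabla_q$. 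Since $R_1$ and $R_2$ are concentrated in degrees $0, 1$, the long exact sequence of the total complex yields a natural isomorphism $H^2(\pazs^\bullet(N_r))[1/p] \isomorphic \cokert\big(\nabla_q^{\ast}\colon H^1(R_1)[1/p] \to H^1(R_2)[1/p]\big)$, and the two chain-level identities $\mu \circ \nabla_q = \gamma - 1$ on $R_1$ and $\nabla_q \circ \mu = u\gamma - 1$ on $R_2$ (with $u := \gamma(\mu)/\mu$) will be the bridge to the action of $\gamma$.

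To make the groups $H^1(R_i)[1/p]$ tractable I would use the filtration devissage: Lemma~\ref{lem:fil_nmodmu_ker} together with Theorem~\ref{thm:qdeformation_dcrys} gives, after inverting $p$, a short exact sequence of complexes $0 \to C^\bullet \to \pazs^\bullet(N_r)[1/p] \to \cald^\bullet(\Dcrys(V(r))) \to 0$, where $C^\bullet$ is the $\mu$-decorated subcomplex from the top row of the diagram \eqref{eq:syntomic_to_blochkato_af+}, and one can iterate this for $R_1$, $R_2$, $R_3$, \dots against the two term complexes $\cald^\bullet(\Dcrys(-))$; the upshot is that the (a priori huge) spaces $H^1(R_i)[1/p]$ and $H^2(C^\bullet)[1/p]$ are finite dimensional over $F$, and $H^2(\pazs^\bullet(N_r))[1/p]$ is exhibited equivalently as $\cokert\big(H^1(\cald^\bullet(\Dcrys(V(r)))) \to H^2(C^\bullet)[1/p]\big)$. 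Now comes the heart of the argument: after choosing an $\AF^+[1/p]$-basis of $N_r[1/p]$, the triviality of the $\Gamma_F$-action on $N_r/\mu N_r$ (Definition~\ref{defi:wachmod_af+}~(1)) means the matrix of $\gamma$ is $\mathrm{Id} + \mu G$ over $\AF^+[1/p]$, so $\nabla_q$ is a twisted derivation whose linearisation is governed by $G$ and by $u \equiv \chi(\gamma) \bmod \mu$; expanding the equation for a preimage $\mu$-adically and solving coefficient by coefficient, the coefficient one must invert at stage $n$ is a unit multiple of $\chi(\gamma)^{n} - 1$, which lies in $\AF^+[1/p]^\times$ precisely because $\gamma$ topologically generates the torsion-free group $\Gamma_0$, so that $\chi(\gamma)^{n} \neq 1$ for $n \geq 1$. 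The delicate point, which I expect to be the main obstacle, is that the denominators $(\chi(\gamma)^{n} - 1)^{-1}$ are not $p$-adically bounded, so the recursion does not show surjectivity of $\nabla_q$ on $N_r[1/p]$ itself; it has to be run at the level of cohomology classes --- modulo $(1-\varphi)$-coboundaries, respectively inside the finite dimensional $H^2(C^\bullet)[1/p]$ --- where the unbounded denominators disappear.

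Assembling everything, Corollary~\ref{cor:dcrys_complex_bks} identifies $\cald^\bullet(\Dcrys(V(r)))$ with the Bloch--Kato complex computing $H^1_f(G_F, V(r))$, and feeding the surjectivity input above into the devissage forces $\cokert(\nabla_q^{\ast}) = 0$; the explicit descriptions of $H^0$ and $H^1$ of $\pazs^\bullet(N_r)[1/p]$ from Lemma~\ref{lem:h0_syn_blochkatoselmer} and Proposition~\ref{prop:h1syn_ext1wach_af+} provide an independent Euler characteristic check of the same conclusion. This proves $H^2(\pazs^\bullet(\NF(V), r)) = 0$; the asserted equality $H^2(\pazs^\bullet(\NF(V), r)) = H^2_f(G_F, V(r))$ is then automatic, since $H^2_f(G_F, V(r)) = 0$ by the definition of the Bloch--Kato Selmer groups in cohomological degrees $\geq 2$.
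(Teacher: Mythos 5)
Your reduction of $H^2(\pazs^{\bullet}(N_r))$ to the cokernel of $\nabla_q^{\ast}\colon H^1(R_1)\to H^1(R_2)$ is a correct reformulation, and you have correctly located the difficulty: a naive $\mu$-adic recursion for a preimage under $\nabla_q$ forces you to divide by $\chi(\gamma)^n-1$ for every $n\geq 1$, and these are not $p$-adically bounded below, so the recursion does not converge. The problem is that your proposal stops exactly there. ``Running the recursion at the level of cohomology classes, where the unbounded denominators disappear'' is not a construction --- you give no mechanism by which passing to $H^1(R_i)$ or to $H^2(C^{\bullet})$ kills the denominators, and the auxiliary inputs you invoke to make those groups tractable (finite-dimensionality of $H^1(R_i)[1/p]$ and of $H^2(C^{\bullet})[1/p]$, and the concluding ``Euler characteristic check'') are themselves unproved; moreover the d\'evissage against $\cald^{\bullet}(\Dcrys(V(r)))$ is only known to be a quasi-isomorphism \emph{after} one knows the present proposition (this is how Remark \ref{rem:syntomic_to_blochkato_af+} is deduced), so feeding it back in here is circular.

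The idea that is missing, and on which the paper's proof is built, is a division of labour between the two operators in the last differential. The operator $[p]_q\varphi$ is topologically nilpotent for the $(p,\mu)$-adic topology on the integral part $\NF(T)\otimes\epsilon^{\otimes r}\subset\NF(V(r))$, because $\prod_{k=0}^{n}\varphi^k([p]_q)$ lies in $(p,\mu)^{n+1}$; hence $1-[p]_q\varphi$ is already \emph{surjective} there, with inverse $\sum_{n\geq 0}([p]_q\varphi)^n$ (Remark \ref{rem:pqphi_converge}). Consequently $\nabla_q$ only has to produce the ``polar part'', i.e.\ the finitely many terms $x/\mu^k\otimes\epsilon^{\otimes r}$ with $1\leq k\leq r$. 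This is Lemma \ref{lem:xmuk_h2syn}: a finite induction on $k$ in which the only denominators appearing are $\chi(\gamma)^{r-k+1}-1$ for $1\leq k\leq r$ --- finitely many fixed nonzero elements of $\ZZ_p$, invertible after inverting $p$ --- while at each stage the error term lying in $\NF(V)\otimes\epsilon^{\otimes r}$ is absorbed by the surjectivity of $1-[p]_q\varphi$; one then checks a posteriori that the constructed $z$ lies in $\Fil^{-1}$. No infinite recursion, and hence no convergence issue, ever arises. If you reorganise your argument around this finite d\'evissage of $\mu^{-r}\NF(V)(r)$ by powers of $\mu$, rather than around the filtration d\'evissage to $\cald^{\bullet}(\Dcrys(V(r)))$, you recover the paper's proof; as written, the central surjectivity claim remains unestablished.
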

\begin{proof}
	Let $x$ be in $\NF(V(r))$, and to prove the claim note that it is enough to show that we can write $x = \nabla_q(y) - (1-[p]_q\varphi)z$, for some $y$ in $\NF(V(r))$ and $z$ in $\Fil^{-1} \NF(V(r))$.
	Write $x = \frac{x'}{\mu^r} \otimes \epsilon^{\otimes r}$, for some $x'$ in $\NF(V)$ and $\epsilon^{\otimes r}$ an $\AF^+\textrm{-basis}$ of $\AF^+(r)$.
	Then, from Lemma \ref{lem:xmuk_h2syn} there exist $y'$ and $z'$ in $\NF(V)$ satisfying the following:
	\begin{equation*}
		\tfrac{x'}{\mu^r} \otimes \epsilon^{\otimes r} = \nabla_q\big(\tfrac{y'}{\mu^{r-1}} \otimes \epsilon^{\otimes r}) - (1-[p]_q\varphi)(z' \otimes \epsilon^{\otimes r}).
	\end{equation*}
	Letting $z = z' \otimes \epsilon^{\otimes r}$ and $y = \tfrac{y'}{\mu^{r-1}} \otimes \epsilon^{\otimes r}$, we see that $x = \nabla_q(y) - (1-[p]_q\varphi)z$, with $y$ in $\NF(V(r))$ and $z$ in $\NF(V)(r) \subset \NF(V(r))$.
	However, note that $[p]_q\varphi(z) = x+z-\nabla_q(y)$ is in $\NF(V(r))$, in particular, $z$ is in $\Fil^{-1} \NF(V(r))$.
	Hence, we get the claim.
\end{proof}

Let $\epsilon^{\otimes r}$ denote an $\AF^+\textrm{-basis}$ of $\AF^+(r)$, and note that the following was used in Proposition \ref{prop:h2_syn_blochkatoselmer}:
\begin{lem}\label{lem:xmuk_h2syn}
	Let $x$ in $\NF(V)$, then for $1 \leqslant k \leqslant r$, there exist some $y$ and $z$ in $\NF(V)$ such that,
	\begin{equation*}
		\tfrac{x}{\mu^k} \otimes \epsilon^{\otimes r} = \nabla_q\big(\tfrac{y}{\mu^{k-1}} \otimes \epsilon^{\otimes r}) - (1-[p]_q\varphi)(z \otimes \epsilon^{\otimes r}).
	\end{equation*}
\end{lem}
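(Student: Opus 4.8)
The plan is to rewrite the asserted identity as a single equation in $\NF(V)$ and then solve that equation by combining two invertibility statements. Throughout I use that, in the setting of Proposition~\ref{prop:h2_syn_blochkatoselmer}, the representation $V$ is positive, so $N := \NF(T)$ is an \emph{effective} Wach module over $\AF^+$, $\NF(V) = N[1/p]$, and $\NF(V(r)) = \mu^{-r}\NF(V)(r)$ by Remark~\ref{rem:wachmod_cycltwist}.

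First I would unwind the two operators on the twisted module. Set $\lambda := \chi(\gamma)$ and $c := \gamma(\mu)/\mu$; since $c \equiv \lambda \bmod \mu\AF^+$ and $\lambda \in \ZZ_p^{\times}$, the element $c$ is a unit in $\AF^+$. For $z \in \NF(V)$, the Frobenius of $\mu^{-r}\NF(V)(r)$ sends $z \otimes \epsilon^{\otimes r} = \mu^{-r}\cdot(\mu^r z \otimes \epsilon^{\otimes r})$ to $\varphi(\mu)^{-r}(\varphi(\mu)^r \varphi(z) \otimes \epsilon^{\otimes r}) = \varphi(z) \otimes \epsilon^{\otimes r}$, whence $(1-[p]_q\varphi)(z \otimes \epsilon^{\otimes r}) = ((1-[p]_q\varphi)z) \otimes \epsilon^{\otimes r}$; and a direct computation with the $\gamma$-action gives, for $y \in \NF(V)$,
\[
\nabla_q\Big(\tfrac{y}{\mu^{k-1}} \otimes \epsilon^{\otimes r}\Big) = \tfrac{1}{\mu^k}\big(c^{-(k-1)}\lambda^r \gamma(y) - y\big) \otimes \epsilon^{\otimes r}.
\]
Writing $\sigma := c^{-(k-1)}\lambda^r\gamma$, an $\AF^+$-semilinear operator on $\NF(V)$ that stabilises every $\mu^j\NF(V)$, the claimed identity becomes equivalent to: find $y, z \in \NF(V)$ with $x = (\sigma-1)(y) - \mu^k(1-[p]_q\varphi)(z)$.

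Next I would establish two invertibility facts. (i) \emph{$1-[p]_q\varphi$ is bijective on $\NF(V)$:} because $[p]_q \equiv p \bmod \mu\AF^+$ and $\varphi(N) \subseteq N$ (effectivity), the operator $[p]_q\varphi$ maps $N$ into $(p,\mu)N$, and iterating, $([p]_q\varphi)^n(N) \subseteq (p,\mu)^n N$; hence $1-[p]_q\varphi$ is the identity plus a topologically nilpotent operator on the $(p,\mu)$-adically complete module $N$, so it is bijective on $N$ with inverse $\sum_{n\ge 0}([p]_q\varphi)^n$, and therefore bijective on $\NF(V)=N[1/p]$. (ii) \emph{$\sigma-1$ is bijective on $\NF(V)/\mu^k\NF(V)$:} the $\mu$-adic filtration on this quotient is finite of length $k$ and is preserved by $\sigma-1$, and on the graded piece $\mu^j\NF(V)/\mu^{j+1}\NF(V) \isomorphic (N/\mu N)[1/p]$ the operator $\sigma$ acts (using $c \equiv \lambda \bmod \mu$ and triviality of $\gamma$ on $N/\mu N$) as multiplication by $\lambda^{r+j-k+1}$; for $0 \le j \le k-1$ the exponent lies in $\{1,\dots,r\}$ since $1 \le k \le r$, and as $\lambda \in 1+p\ZZ_p$ is non-torsion (here $p \ge 3$), the scalar $\lambda^{r+j-k+1}-1$ is a nonzero element of $\ZZ_p$, hence a unit of the field $F$, so $\sigma-1$ is bijective on each graded piece and therefore on $\NF(V)/\mu^k\NF(V)$.

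Finally, I would assemble the solution: by (ii) pick $y \in \NF(V)$ with $(\sigma-1)(y) \equiv x \bmod \mu^k\NF(V)$, write $x - (\sigma-1)(y) = \mu^k w$ with $w \in \NF(V)$, and set $z := -(1-[p]_q\varphi)^{-1}(w) \in \NF(V)$ via (i); then $x = (\sigma-1)(y) - \mu^k(1-[p]_q\varphi)(z)$, which translates back into the asserted identity. I expect the only real subtlety to be step (ii): integrally the ``diagonal'' scalars $\lambda^m-1$ are not units, so one is forced to work in the isogeny category, and it is exactly the hypothesis $k \le r$ --- guaranteeing that each exponent $r+j-k+1$ (for $0 \le j \le k-1$) is $\ge 1$, hence nonzero --- that makes $\sigma-1$ invertible modulo $\mu^k$ after inverting $p$.
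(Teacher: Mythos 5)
Your proof is correct and is in substance the paper's own argument: the paper proceeds by induction on $k$, which is precisely your d\'evissage of $\sigma-1$ along the $\mu$-adic filtration of $\NF(V)/\mu^k\NF(V)$ (the scalars $\chi(\gamma)^{r-k+1}-1$ appearing in the paper's induction are your graded eigenvalues $\lambda^{r+j-k+1}-1$, nonzero exactly because $k \leq r$), and it invokes the same invertibility of $1-[p]_q\varphi$ via topological nilpotence (Remark \ref{rem:pqphi_converge}). Your reformulation into the single equation $x = (\sigma-1)(y) - \mu^k(1-[p]_q\varphi)(z)$ and the two separate invertibility statements is a clean, equivalent packaging of the same ideas.
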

\begin{proof}
	Note that $\gamma$ is a topological generator of $\Gamma_0$, in particular, we have that $\chi(\gamma)$ is in $1 + p\ZZ_p$.
	Moreover, note that up to multiplying by some power of $p$ we may assume that $x$ is in $\NF(T)$.
	Therefore, to prove the lemma, it is enough to show that for any $x$ in $\NF(T)$ there exist some $y$ and $z$ in $\NF(V)$ such that
	\begin{equation}\label{eq:xmuk}
		\tfrac{x}{\mu^k} \otimes \epsilon^{\otimes r} = \tfrac{\gamma-1}{\mu}\big(\tfrac{y}{\mu^{k-1}} \otimes \epsilon^{\otimes r}) - (1-[p]_q\varphi)(z \otimes \epsilon^{\otimes r}).
	\end{equation}
	Before proving the modified claim, let us recall that the action of $\Gamma_F$ is trivial on $N_F(T)/\mu \NF(T)$ (see Definition \ref{defi:wachmod_af+}), so for any $x$ in $\NF(T)$ there exists some $x_1$ in $\NF(T)$ such that $(\gamma-1)x = \mu x_1$.
	Using this, we shall prove the claim above by induction on $k$.
	So, let $k = 1$, and note that we have the following:
	\begin{equation*}
		\tfrac{\gamma-1}{\mu}\big(\tfrac{x}{\chi(\gamma)^r-1} \otimes \epsilon^{\otimes r}) = \big(\tfrac{x}{\mu} + \tfrac{\chi(\gamma)^r x_1}{\chi(\gamma)^r-1}\big) \otimes \epsilon^{\otimes r} = \big(\tfrac{x}{\mu} + (1-[p]_q\varphi)z_1) \otimes \epsilon^{\otimes r},
	\end{equation*}
	where $z_1$ is in $\NF(V)$ following Remark \ref{rem:pqphi_converge}.
	Upon rearranging the terms, we see that \eqref{eq:xmuk} holds for $k = 1$.
	Now, we write $u = (\chi(\gamma)\mu)/\gamma(\mu)$ in $1 + p\mu \AF^+$, take $1 < k \leqslant r$, and assume that \eqref{eq:xmuk} holds for $k-1$.
	Then, we have that
	\begin{align*}
		\tfrac{\gamma-1}{\mu}\big(\tfrac{x}{\mu^{k-1}(\chi(\gamma)^{r-k+1}-1)} \otimes \epsilon^{\otimes r}) &= \tfrac{u^{k-1}\chi(\gamma)^{r-k+1}-1}{\mu^k (\chi(\gamma)^{r-k+1}-1)}x \otimes \epsilon^{\otimes r} + \tfrac{u^{k-1}\chi(\gamma)^{r-k+1}}{\mu^{k-1} (\chi(\gamma)^{r-k+1}-1)}x_1 \otimes \epsilon^{\otimes r}\\
		&= \big(\tfrac{x}{\mu^k} + \tfrac{x_k}{\mu^{k-1}}\big) \otimes \epsilon^{\otimes r}\\
		&= \tfrac{x}{\mu^k} \otimes \epsilon^{\otimes r} + \tfrac{\gamma-1}{\mu}\big(\tfrac{y_k}{\mu^{k-2}} \otimes \epsilon^{\otimes r}) - (1-[p]_q\varphi) (z_k \otimes \epsilon^{\otimes r}),
	\end{align*}
	for some $x_k$, $y_k$ and $z_k$ in $\NF(V)$, and note that the last equality follows by the induction hypothesis.
	By rearranging the terms, we see that \eqref{eq:xmuk} also holds for any $1 < k \leqslant r$.
	This allows us to conclude.
\end{proof}

\begin{rem}\label{rem:pqphi_converge}
	For any $x$ in $\NF(T)$, there exists some $y$ in $\NF(T)$ such that $(1-[p]_q\varphi)y = x$.
	Indeed, note that the series $(1 + [p]_q\varphi + ([p]_q\varphi)^2 + \cdots)$ converge as series of operators on $\NF(T)$ since we have that $\prod_{k=0}^n \varphi^k([p]_q)$ is in $(p, \mu)^{n+1}$, for each $n \in \NN$.
	In particular, we see that $([p]_q\varphi)^n$ is $(p, \mu)\textrm{-adically}$ nilpotent, and we may take $y = (1 + [p]_q\varphi + ([p]_q\varphi)^2 + \cdots)x$ in $\NF(T)$.
	It is easy to see that a similar claim is also true for $\NF(V)$.
\end{rem}

\subsection{Syntomic complex over \texorpdfstring{$S$}{-}}\label{subsec:syntomic_complex_s}

Let $M$ be a Wach module over $S$, and define an operator $\nabla_0 := \tfrac{\gamma-1}{\mu_0} \colon M \rightarrow M$.
From Remark \ref{rem:gamma_minus1_image} note that we have $\nabla_0(\Fil^k M) \subset \Fil^{k-p+1} M$, for each $k \in \ZZ$.
\begin{defi}\label{defi:syntomic_complex_s}
	Define the \textit{syntomic complex} with coefficients in $M$ as
	\begin{equation}\label{eq:syntomic_complex_s}
		\pazs^{\bullet}(M) \colon \Fil^0 M \xrightarrow{\hspace{1mm}(\nabla_0, 1-\varphi)\hspace{1mm}} \Fil^{-p+1} M \oplus M \xrightarrow{\hspace{1mm}(1-\ptilde^{p-1}\varphi, \nabla_0)^{\intercal}\hspace{1mm}} M,
	\end{equation}
	where the first map is given as $x \mapsto (\nabla_0(x), (1-\varphi)x)$, and the second map is given as $(x, y) \mapsto (1-\ptilde^{p-1}\varphi)x - \nabla_0(y)$.
\end{defi}

\begin{nota}
	Similar to the notation described before Proposition \ref{prop:h1syn_ext1wach_af+}, we note the following:
	let $M$ and $M'$ be two Wach modules over $S$.
	We say that a Wach module $E$ over $S$ is an extension of $M$ by $M'$ if it fits into a $(\varphi, \Gamma_0)\equivariant$ exact sequence of $S\textrm{-modules}$, $0 \rightarrow M' \rightarrow E \rightarrow M \rightarrow 0$.
	Two extensions $E_1$ and $E_2$ are equivalent if there exists a $(\varphi, \Gamma_0)\equivariant$ commutative diagram of $S\textrm{-modules}$ with exact rows:
	\begin{center}
		\begin{tikzcd}
			0 \arrow[r] & M' \arrow[r] \arrow[d, equal] & E_1 \arrow[r] \arrow[d, "\wr"] & M \arrow[r] \arrow[d, equal] & 0\\
			0 \arrow[r] & M' \arrow[r] & E_2 \arrow[r] & M \arrow[r] & 0.
		\end{tikzcd}
	\end{center}
	Let $\Ext^1_{(\varphi, \Gamma_0)\Mod_{S}^{\ptilde}}(M, M')$ denote the set of equivalence classes of extensions of $M$ by $M'$.
\end{nota}

\begin{prop}\label{prop:h1syn_ext1wach_s}
	Let $M$ be a Wach module over $S$.
	Then, we have a natural bijection of sets
	\begin{equation}\label{eq:h1syn_ext1wach_s}
		H^1(\pazs^{\bullet}(M)) \isomorphic \Ext^1_{(\varphi, \Gamma_0)\Mod_S^{\ptilde}}(S, M).
	\end{equation}
	Moreover, after inverting $p$, we have a natural isomorphism of abelian groups:
	\begin{equation}\label{eq:h1syn_ext1wach_spinv}
		H^1(\pazs^{\bullet}(M))[1/p] = H^1(\pazs^{\bullet}(M)[1/p]) \isomorphic \Ext^1_{(\varphi, \Gamma_0)\Mod_{S[1/p]}^{\ptilde}}(S[1/p], M[1/p]).
	\end{equation}
\end{prop}
\begin{proof}
	For the bijection of sets in \eqref{eq:h1syn_ext1wach_s}, note that the arguments given in the proof of Proposition \ref{prop:h1syn_ext1wach_af+} easily adapts to our current setting.
	In particular, in the proof of Proposition \ref{prop:h1syn_ext1wach_af+} we can replace $\AF^+$ with $S$ and $N$ with $M$, and work in the category $(\varphi, \Gamma_0)\Mod_S^{\ptilde}$ instead of $(\varphi, \Gamma_F)\Mod_{\AF^+}^{[p]_q}$.
	Similarly, adapting the arguments given in the proof of Corollary \ref{cor:h1syn_ext1wach_bf+} to our current setting yields the isomorphism of abelian groups in \eqref{eq:h1syn_ext1wach_spinv}.
	We omit the details to avoid repetition.
\end{proof}

Now, recall that $N := \AF^+ \otimes_S M$ is a Wach module over $\AF^+$ from Theorem \ref{thm:wachmod_s_af+_equiv}.
Our next goal is to compare the syntomic complexes defined in \eqref{eq:syntomic_complex_s} and \eqref{eq:syntomic_complex_af+} with coefficients in $M$ and $N$, respectively.
\begin{prop}\label{prop:syntomic_stoaf+}
	The natural $S\linear$ and $(\varphi, \Gamma_0)\equivariant$ map $M \rightarrow N$ induces a natural morphism of complexes $\pazs^{\bullet}(M) \rightarrow \pazs^{\bullet}(N)$.
	Moreover, the natural map on cohomology $H^k(\pazs^{\bullet}(M)) \rightarrow H^k(\pazs^{\bullet}(N))$ is bijective for $k = 0, 1$ and injective for $k = 2$.
\end{prop}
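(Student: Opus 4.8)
The plan is to realise $\pazs^\bullet(M)$ as a direct summand of $\pazs^\bullet(N)$ after replacing both by $\FF_p^\times$-equivariant models. By the isomorphism of Remark~\ref{rem:syntomic_to_blochkato_af+}, $\pazs^\bullet(N)$ is isomorphic to the complex
\begin{equation*}
	\widetilde{\pazs}^\bullet(N) : \Fil^0 N \xrightarrow{\;(\gamma-1,\ 1-\varphi)\;} \mu\Fil^{-1}N \oplus N \xrightarrow{\;(1-\varphi,\ \gamma-1)^{\intercal}\;} \mu N,
\end{equation*}
the isomorphism being the identity in degree $0$, $(x,y)\mapsto(\mu x, y)$ in degree $1$, and multiplication by $\mu$ in degree $2$ (using $\varphi(\mu)=[p]_q\mu$ and Lemma~\ref{lem:wachmod_af+_twist_fil}). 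In the same way, now using $\varphi(\mu_0)=u\mu_0\ptilde^{p-1}$ in place of $\varphi(\mu)=[p]_q\mu$ together with Lemma~\ref{lem:wachmod_s_twist_fil}, one checks that $\pazs^\bullet(M)$ is isomorphic to
\begin{equation*}
	\widetilde{\pazs}^\bullet(M) : \Fil^0 M \xrightarrow{\;(\gamma-1,\ 1-\varphi)\;} \mu_0\Fil^{-p+1}M \oplus M \xrightarrow{\;(1-\varphi,\ \gamma-1)^{\intercal}\;} \mu_0 M,
\end{equation*}
via multiplication by $\mu_0$ in the first factor of degree $1$ and in degree $2$.

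Next I would note that $\widetilde{\pazs}^\bullet(N)$ is $\FF_p^\times$-equivariant. Each of its terms is an $\FF_p^\times$-stable $S$-submodule of $N[1/\mu]$: the Nygaard pieces $\Fil^k N$ are stable because $[a]$ multiplies $[p]_q$ by a unit of $\AF^+$ and therefore preserves the condition $\varphi(x)\in[p]_q^k N$, while multiplying an $\FF_p^\times$-stable submodule by $\mu$ keeps it stable since $[a]\mu$ is a unit times $\mu$; and $\gamma-1$, $1-\varphi$ commute with $\FF_p^\times$ because $\Gamma_F$ is abelian and $\varphi$ is $\Gamma_F$-equivariant. Since $p-1\in\ZZ_p^\times$ this gives a decomposition $\widetilde{\pazs}^\bullet(N)=\bigoplus_{i=0}^{p-2}\widetilde{\pazs}^\bullet(N)_i$, and using $(\Fil^0 N)_0=\Fil^0 M$, $N_0=M$, $(\mu N)_0=\mu_0 M$ (Proposition~\ref{prop:wachmod_fpx_descent}) and $(\mu\Fil^{-1}N)_0=(\Fil^0 N\cap\mu N)_0=\Fil^0 M\cap\mu_0 M=\mu_0\Fil^{-p+1}M$ (Lemmas~\ref{lem:wachmod_af+_twist_fil} and~\ref{lem:wachmod_s_twist_fil}), one identifies $\widetilde{\pazs}^\bullet(N)_0=\widetilde{\pazs}^\bullet(M)$. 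Transporting the split inclusion $\widetilde{\pazs}^\bullet(N)_0\hookrightarrow\widetilde{\pazs}^\bullet(N)$ through the isomorphisms above produces the morphism of the statement (a degreewise check shows it is the natural one attached to $M\hookrightarrow N$). In particular the induced map $H^k(\pazs^\bullet(M))\to H^k(\pazs^\bullet(N))=\bigoplus_i H^k(\widetilde{\pazs}^\bullet(N)_i)$ is the inclusion of the $i=0$ summand, hence injective for every $k$; this settles $k=2$.

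It then remains to prove $H^k(\widetilde{\pazs}^\bullet(N)_i)=0$ for $i\neq 0$ and $k=0,1$. For $k=0$: $H^0(\widetilde{\pazs}^\bullet(N)_i)=\{x\in(\Fil^0 N)_i : (\gamma-1)x=0=(1-\varphi)x\}=(N_i)^{\varphi=1,\,\gamma=1}$, and $\bigoplus_i(N_i)^{\varphi=1,\,\gamma=1}=N^{\varphi=1,\,\gamma=1}$, which equals $M^{\varphi=1,\,\gamma=1}=(N_0)^{\varphi=1,\,\gamma=1}$ by the computation in the proof of Lemma~\ref{lem:h0_syn_blochkatoselmer}; hence $(N_i)^{\varphi=1,\,\gamma=1}=0$ for $i\neq 0$. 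For $k=1$: by Propositions~\ref{prop:h1syn_ext1wach_s} and~\ref{prop:h1syn_ext1wach_af+} one has $H^1(\widetilde{\pazs}^\bullet(N)_0)=H^1(\pazs^\bullet(M))\cong\Ext^1_{(\varphi,\Gamma_0)\Mod_{S}^{\ptilde}}(S,M)$ and $\bigoplus_i H^1(\widetilde{\pazs}^\bullet(N)_i)=H^1(\pazs^\bullet(N))\cong\Ext^1_{(\varphi,\Gamma_F)\Mod_{\AF^+}^{[p]_q}}(\AF^+,N)$; tracing the cocycle-to-extension recipe of those proofs (an extension of $S$ by $M$ is sent to the extension of $\AF^+$ by $N$ obtained by applying $\AF^+\otimes_S-$, which is compatible with $\widetilde{\pazs}^\bullet(N)_0\hookrightarrow\widetilde{\pazs}^\bullet(N)$) identifies the summand inclusion with the isomorphism on $\Ext^1$ coming from the equivalence of Theorem~\ref{thm:wachmod_s_af+_equiv}. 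Being bijective, it forces $H^1(\widetilde{\pazs}^\bullet(N)_i)=0$ for $i\neq 0$, which finishes the proof.

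I expect the first two steps to be the delicate part: building the $\FF_p^\times$-equivariant model $\widetilde{\pazs}^\bullet(N)$ and checking \emph{precisely} that its $i=0$ isotypic component is $\widetilde{\pazs}^\bullet(M)$, hence $\pazs^\bullet(M)$. This relies on the fine arithmetic of $\mu$, $\mu_0$, $[p]_q$, $\ptilde$ and $\varphi$ from Subsection~\ref{subsec:period_rings} — especially the relation $\varphi(\mu_0)=u\mu_0\ptilde^{p-1}$ and the compatibility of the Nygaard filtrations on $M$ and $N$ recorded in Lemmas~\ref{lem:wachmod_af+_twist_fil}, \ref{lem:wachmod_s_twist_fil} and~\ref{lem:mmodmu0_nmodmu_fil} — and, in degree $1$, on correctly matching the summand inclusion with the $\Ext$-theoretic equivalence, which calls for care with the explicit extension classes constructed in Propositions~\ref{prop:h1syn_ext1wach_af+} and~\ref{prop:h1syn_ext1wach_s}.
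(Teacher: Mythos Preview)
Your proof is correct and follows essentially the same approach as the paper: replace both syntomic complexes by their $(\gamma-1)$-versions $\widetilde{\pazs}^\bullet$, use the $\FF_p^{\times}$-decomposition of $\widetilde{\pazs}^\bullet(N)$ to identify $\widetilde{\pazs}^\bullet(M)$ with the $i=0$ summand (giving injectivity on all $H^k$), then handle $H^0$ via Lemma~\ref{lem:h0_syn_blochkatoselmer} and $H^1$ by matching the summand inclusion with the $\Ext^1$-isomorphism of Theorem~\ref{thm:wachmod_s_af+_equiv} through Propositions~\ref{prop:h1syn_ext1wach_af+} and~\ref{prop:h1syn_ext1wach_s}. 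The only cosmetic difference is that you phrase the bijectivity for $k=0,1$ as vanishing of the $i\neq 0$ summands, whereas the paper argues the two $H^k$ directly coincide; the content is identical.
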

\begin{proof}
	Let us consider the following diagram:
	\begin{equation}\label{eq:syntomic_stoaf+}
		\begin{tikzcd}[column sep=large]
			C^{\bullet} = \Fil^0 M \arrow[r, "(\gamma-1 {,} 1-\varphi)"] \arrow[d] & \mu_0\Fil^{-p+1} M \oplus M \arrow[r, "(1-\varphi {,} \gamma-1)^{\intercal}"] \arrow[d] &[5mm] \mu_0 M \arrow[d] \\
			D^{\bullet} = \Fil^0 N \arrow[r, "(\gamma-1 {,} 1-\varphi)"] & \mu\Fil^{-1} N \oplus N \arrow[r, "(1-\varphi {,} \gamma-1)^{\intercal}"] & \mu N,
		\end{tikzcd}
	\end{equation}
	where the top row is isomorphic to the complex $\pazs^{\bullet}(M)$ in \eqref{eq:syntomic_complex_s}, the bottom row is isomorphic to the complex $\pazs^{\bullet}(N)$ in \eqref{eq:syntomic_complex_af+}, and the vertical maps are induced by the natural $S\linear$ and $(\varphi, \Gamma_0)\equivariant$ map $M \rightarrow N$.
	In particular, \eqref{eq:syntomic_stoaf+} describes a natural morphism of complexes $\pazs^{\bullet}(M) \rightarrow \pazs^{\bullet}(N)$.

	Now, note that each term of the complex $D^{\bullet}$ in \eqref{eq:syntomic_stoaf+} admits an action of $\Gamma_{\textrm{tor}} \isomorphic \FF_p^{\times}$, which commutes with the action of $\varphi$ and $\Gamma_0$.
	So, by Remark \ref{rem:fpx_decomp}, the complex $D^{\bullet}$ admits a decomposition as $\oplus_{i=0}^{p-1} D_i^{\bullet}$, where
	\begin{equation*}
		D_i^{\bullet} = (\Fil^0 N)_i \xrightarrow{\hspace{1mm}(\gamma-1, 1-\varphi)\hspace{1mm}} (\mu\Fil^{-1} N)_i \oplus N_i \xrightarrow{\hspace{1mm}(1-\varphi, \gamma-1)^{\intercal}\hspace{1mm}} (\mu N)_i.
	\end{equation*}
	Using that $M \isomorphic N_0$ as $(\varphi, \Gamma_0)\modules$ over $S$, and the description of filtration on $M$ in Section \ref{subsubsec:nyfil_m}, it follows that $C^{\bullet} \isomorphic D_0^{\bullet}$.
	Moreover, it is clear that we have $H^k(D^{\bullet}) = \oplus_{i=0}^{p-1} H^k(D_i^{\bullet})$.
	In particular, the natural map $H^k(\pazs^{\bullet}(M)) \isomorphic H^k(C^{\bullet}) \rightarrow H^k(D^{\bullet}) \lisomorphic H^k(\pazs^{\bullet}(N))$ is injective for each $k = 0, 1, 2$.
	Now, for $k =0$, note that since we have $\Fil^0 M \isomorphic (\Fil^0 N)^{\FF_p^{\times}}$ as $(\varphi, \Gamma_0)\modules$ over $S$, therefore, by using Lemma \ref{lem:h0_syn_blochkatoselmer} we get that
	\begin{align*}
		H^0(\pazs^{\bullet}(M)) \isomorphic (\Fil^0 M)^{\varphi=1, \gamma=1} &\isomorphic (\Fil^0 N)^{\varphi=1, \gamma=1, \FF_p^{\times}}\\
		&= N^{\varphi=1, \gamma=1, \FF_p^{\times}} = N^{\varphi=1, \Gamma_F} \lisomorphic H^0(\pazs^{\bullet}(N)).
	\end{align*}
	Finally, for $k = 1$, let us consider the following diagram:
	\begin{center}
		\begin{tikzcd}
			H^1(\pazs^{\bullet}(M)) \arrow[r, "\sim"', "\eqref{eq:h1syn_ext1wach_s}"] \arrow[d] & \Ext^1_{(\varphi, \Gamma_0)\Mod_S^{\ptilde}}(S, M) \arrow[d, "\wr"]\\
			H^1(\pazs^{\bullet}(N)) \arrow[r, "\sim"', "\eqref{eq:h1syn_ext1wach_af+}"] & \Ext^1_{(\varphi, \Gamma_F)\Mod_{\AF^+}^{[p]_q}}(\AF^+, N),
		\end{tikzcd}
	\end{center}
	where the left vertical arrow is the natural map constructed above, and the right vertical arrow is an isomorphism induced by the functor in \eqref{eq:wachmod_s_af+_equiv} of Theorem \ref{thm:wachmod_s_af+_equiv}, which preserves short exact sequences.
	The diagram commutes by definition, and it follows that the left vertical arrow is an isomorphism.
	This allows us to conclude.
\end{proof}

\begin{rem}\label{rem:syntomic_stoaf+}
	From the injectivity of the map $H^2(\pazs^{\bullet}(M)) \rightarrow H^2(\pazs^{\bullet}(N))$, it follows that we have $H^2(\pazs^{\bullet}(M))[1/p] = H^2(\pazs^{\bullet}(N))[1/p] = 0$ (see Proposition \ref{prop:h2_syn_blochkatoselmer}).
\end{rem}

\begin{thm}\label{thm:syntomic_blochkatoselmer_s}
	Let $M$ be a Wach module over $S$ and $V = \TF(M)[1/p]$ the associated $\padic$ crystalline representation of $G_F$ from Theorem \ref{thm:crystalline_wach_s_equivalence}.
	Then, we have a natural isomorphism, for each $k \in \NN$,
	\begin{equation*}
		H^k(\pazs^{\bullet}(M))[1/p] \isomorphic H^k_f(G_F, V).
	\end{equation*}
\end{thm}
\begin{proof}
	The claim follows by combining Proposition \ref{prop:syntomic_stoaf+}, Remark \ref{rem:syntomic_stoaf+} and Theorem \ref{thm:syntomic_blochkatoselmer_af+}.
\end{proof}

\begin{rem}\label{rem:syntomic_to_blochkato_s}
	Similar to Remark \ref{rem:syntomic_to_blochkato_af+}, let us consider the following diagram of complexes:
	\begin{equation}\label{eq:syntomic_to_blochkato_s}
		\begin{tikzcd}[column sep=large]
			\mu_0 \Fil^{-1} M \arrow[r, "(\gamma-1 {,} 1-\varphi)"] \arrow[d] & \mu_0 \Fil^{-p+1} M \oplus \mu_0 M \arrow[r, "(1-\varphi {,} \gamma-1)^{\intercal}"] \arrow[d] &[5mm] \mu_0 M \arrow[d, equal]\\
			\Fil^0 M \arrow[r, "(\gamma-1 {,} 1-\varphi)"] \arrow[d] & \mu_0 \Fil^{-p+1} M \oplus M \arrow[r, "(1-\varphi {,} \gamma-1)^{\intercal}"] \arrow[d] & \mu_0 M \\
			\Fil^0 (M/\mu_0 M) \arrow[r, "1-\varphi"] & M/\mu_0 M,
		\end{tikzcd}
	\end{equation}
	where the vertical arrows are natural maps, and the complex in the middle row is isomorphic to the complex $\pazs^{\bullet}(M)$ in \eqref{eq:syntomic_complex_s} and may be seen as a subcomplex of the Fontaine--Herr complex in \eqref{eq:fontaine_herr_complex}.
	Now, let $V := \TF(M)[1/p]$ from Theorem \ref{thm:crystalline_wach_s_equivalence}.
	Then, after inverting $p$ and using Theorem \ref{thm:mu0_deformation_dcrys}, we see that the complex in the bottom row of \eqref{eq:syntomic_to_blochkato_s} is the same as the complex $\cald^{\bullet}(\Dcrys(V))$ in \eqref{eq:dcrys_complex_bks}.
	Moreover, note that in \eqref{eq:syntomic_to_blochkato_s} the middle column is exact by Theorem \ref{thm:mu0_deformation_dcrys}, and the left-hand side column is exact by \eqref{eq:fil_mmodmu0_ker} in Lemma \ref{lem:fil_mmodmu0_ker}.
	Hence, by Theorem \ref{thm:syntomic_blochkatoselmer_s} and Corollary \ref{cor:dcrys_complex_bks}, it follows that the diagram \eqref{eq:syntomic_to_blochkato_s} induces a natural quasi-isomorphism of complexes $\pazs^{\bullet}(M)[1/p] \simeq \cald^{\bullet}(\Dcrys(V))$.
\end{rem}



\setstretch{0.9}
\phantomsection
\printbibliography[heading=bibintoc, title={References}]

\Addresses

\end{document}